\def\colon{\mathpunct{:}}
\title{A lower bound in the problem of realization of cycles}
\author{Vasilii Rozhdestvenskii}
\thanks{This work was performed at the Steklov International Mathematical Center and supported by the Ministry of Science and Higher Education of the Russian Federation (agreement no.\ 075-15-2022-265).}
\address{{\sloppy
\parbox{0.9\textwidth}{
Steklov Mathematical Institute of Russian Academy of Sciences, Moscow, Russia
\\[5pt]
Skolkovo Institute of Science and Technology, Moscow, Russia
\\[5pt]
National Research University Higher School of Economics, Moscow, Russia
}\smallskip}}
\email{vrozhd@mi-ras.ru  \medskip}
\subjclass[2020]{57R95 (Primary); 55S20, 19L41 (Secondary)}
\keywords{Realization of cycles, Steenrod problem, Atiyah---Hirzebruch spectral sequence, higher cohomological operations, Steenrod operations, complex $\K$-theory}
\begin{document}

\newtheorem{Th}{Theorem}
	\newtheorem*{Th*}{Theorem} 
	\newtheorem{Lem}{Lemma}
	\newtheorem*{Lem*}{Lemma}
	\newtheorem{Cor}{Corollary}
	\newtheorem*{Cor*}{Corollary}
	\newtheorem*{Frame}{Frame}
	\newtheorem*{Denom}{Denomination}
	\newtheorem*{Prop*}{Proposition}
	\newtheorem{Prop}{Proposition}
	\theoremstyle{definition}
	\newtheorem{definition}{Definition}
	\newtheorem*{definition*}{Definition}
	\newtheorem{Rem}{Remark}
	\newtheorem{Conv}{Convention}

	\makeatletter
\newcommand{\colim@}[2]{%
  \vtop{\m@th\ialign{##\cr
    \hfil$#1\operator@font colim$\hfil\cr
    \noalign{\nointerlineskip\kern1.5\ex@}#2\cr
    \noalign{\nointerlineskip\kern-\ex@}\cr}}%
}
\newcommand{\colim}{%
  \mathop{\mathpalette\colim@{\rightarrowfill@\textstyle}}\nmlimits@
}
\makeatother
	
	\def\rk{\mathop{\mathrm{rk}}}
	\def\id{\mathord{\mathrm{id}}}
	\def\pt{\mathord{\mathrm{pt}}}
	\def\Ker{\mathop{\mathrm{Ker}}}
	\def\Im{\mathop{\mathrm{Im}}}
	\def\Tor{\mathop{\mathrm{Tor}}}
	\def\Ext{\mathop{\mathrm{Ext}}}
	\def\Tors{\mathop{\mathrm{Tors}}}
	\def\ord{\mathop{\mathrm{ord}}}
	\def\ex{\mathop{\mathrm{ex}}}
	\def\Indet{\mathop{\mathrm{Indet}}}
	\def\Hom{\mathop{\mathrm{Hom}}}
	\def\Ann{\mathop{\mathrm{Ann}}}
	\def\MU{\mathord{\mathrm{MU}}}
	\def\bu{\mathord{\mathrm{bu}}}
	\def\MSO{\mathord{\mathrm{MSO}}}
	\def\SO{\mathord{\mathrm{SO}}}
	\def\Sq{\mathord{\mathrm{Sq}}}
	\def\U{\mathord{\mathrm{U}}}
	\def\MG{\mathord{\mathrm{MG}}}
	\def\ch{\mathord{\mathrm{ch}}}
	\def\BU{\mathord{\mathrm{BU}}}
	\def\St{\mathord{\mathrm{St}}}
	\def\K{\mathord{\mathrm{K}}}
	\newcommand{\Z}{\mathbb{Z}}
	\newcommand{\Q}{\mathbb{Q}}
	\let\le\leqslant
	\let\ge\geqslant

\begin{abstract}
We consider the classical Steenrod problem on realization of integral homology classes by continuous images of smooth oriented manifolds. Let $k(n)$ be the smallest positive integer such that any integral $n$-dimensional homology class becomes realizable in the sense of Steenrod after multiplication by~$k(n)$. The best known upper bound for $k(n)$ was obtained independently by G.~Brumfiel and V.~Buchstaber in 1969. All known lower bounds for $k(n)$ were very far from this upper bound. The main result of this paper is a new lower bound for $k(n)$ which is asymptotically equivalent to the Brumfiel---Buchstaber upper bound (in the logarithmic scale). For $n<24$ we prove that our lower bound is exact. Also we obtain analogous results for the case of realization of integral homology classes by continuous images of smooth stably complex manifolds.
\end{abstract}
\maketitle

  \section{Introduction}
One of the classical problems in topology is the Steenrod problem on realization of cycles, formulated at the end of the $1940$s (see  \cite{Steenrod}): 
\[
\parbox[c]{14cm}{
For a given integral homology class $x\in H_n(X;\Z)$ of a topological space $X$, do there exist a smooth closed oriented manifold $M$ and a continuous map  $f\colon M^n \to X$ such that $f_*[M^n]=x$?}
\] 

This question is closely related to the question of realization of integral homology classes of a smooth manifold by smooth oriented submanifolds. Indeed, the space $X$ (which, without loss of generality, can be considered to be a finite CW complex) can be embedded in a high-dimensional Euclidean space $\mathbb{R}^N$ and one can take its regular neighbourhood $V(X)$. Then the problem of realization of  homology classes in $X$ in the sense of Steenrod reduces to the problem of realization of homology classes in the manifold $V(X)$ by submanifolds.
Moreover, taking the double $V(X)\cup_{\partial V(X)}V(X)$ of the manifold $V(X)$, one can reduce the Steenrod problem for the space $X$ to the problem of realization of homology classes by submanifolds in a closed oriented manifold. 

In modern terminology, the Steenrod problem is the problem of describing the image of the natural homomorphism $\mu_{\SO*}\colon \MSO_n(X)\to H_n(X;\Z)$. It is natural to pose a similar problem for other bordism theories, among which complex bordism is of particular interest. 

First results on the Steenrod problem were obtained by R. Thom in 1954 (see \cite{Thom}). Namely, he constructed the first examples of Steenrod-non-realizable classes and proved that an arbitrary $n$-dimensional integral homology class can be realized after multiplication by some natural number, which can be chosen uniformly for all $n$-dimensional classes. In connection with this result, the following problem arose:
\[
\parbox[c]{14cm}{
For each dimension $n$ find the smallest positive integer $k_{\SO}(n)$ such that every integral \linebreak $n$-dimensional homology class can be realized in the sense of Steenrod after multiplication by~$k_{\SO}(n)$. 
}
\]
And there is a similar problem for complex bordism: 
\[
\parbox[c]{14cm}{
For each dimension $n$ find the smallest positive integer $k_{\U}(n)$ such that every integral \linebreak  $n$-dimensional homology class can be realized by a continuous image of a stably complex manifold after multiplication by $k_{\U}(n)$. 
}
\]

 The main result of the paper is a significant improvement of previously known lower bounds for the numbers $k_{\SO}(n)$ and $k_{\U}(n)$, as well as a slight improvement of upper bounds. As a consequence,  we obtain such bounds for the numbers $k_{\SO}(n)$ and $k_{\U}(n)$ which are asymptotically exact in the logarithmic scale. Namely, the following theorem is proved.
\begin{Th}[Theorem~\ref{Main theorem intro}]\label{main theorem beginning introduction}
	For every dimension $n$ the following inequalities hold:
	\[
	\left[\frac{n-1}{2}\right]\hskip-1mm \mathord{\vcenter{\hbox{\LARGE\textup !}}}  \le k_{\U}(n) \le \prod_{p\ \text{prime}}p^{\left[\frac{n-3}{2(p-1)}\right]}
	\]
	and
	\[
	\prod_{p\ \text{prime},\ p>2} p^{ \sum_{i=1}^{\infty} \left[\frac{n-1}{2p^i}\right]} \le k_{\SO}(n) \le \prod_{p\ \text{prime},\ p>2}p^{\left[\frac{n-3}{2(p-1)}\right]}.
	\]
\end{Th}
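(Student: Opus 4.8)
The plan is to reduce the realization problem to a computation in connective complex $\K$-theory. A class $x\in H_n(X;\Z)$ is realizable by a stably complex manifold precisely when it lies in the image of the edge homomorphism $\mu_{\U*}\colon\MU_n(X)\to H_n(X;\Z)$ of the Atiyah--Hirzebruch spectral sequence for $\MU$. The Todd orientation $\MU\to\bu$ is surjective on homotopy groups, and a comparison of Atiyah--Hirzebruch spectral sequences shows that $\MU_n(X)\to\bu_n(X)$ is surjective as well; hence the two edge homomorphisms have the same image and
\[
\operatorname{coker}\mu_{\U*}\;\cong\;\operatorname{coker}\bigl(\bu_n(X)\to H_n(X;\Z)\bigr).
\]
For $\MSO$ the prime $2$ disappears by Thom's theorem, and at an odd prime $p$ one replaces $\bu$ by its Adams summand $\ell=BP\langle1\rangle$, with $|v_1|=2(p-1)$, obtaining the analogous identification. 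Both bounds therefore reduce to estimating the cokernel of the connective $\K$-theory Hurewicz map.

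For the upper bound I would run the Atiyah--Hirzebruch spectral sequence for $\bu$ (resp.\ $\ell$). As $\bu_*$ is torsion-free and even, the only differentials leaving the bottom line $H_*(X;\Z)$ are odd, and they are integral cohomology operations with torsion image; at $p$ the leading one is the Milnor primitive $Q_1$ in degree $2p-1$ (tensored with $v_1$), and the higher ones are its iterated $v_1$-Bocksteins. The $v_1$-periodicity of length $2(p-1)$ governs how far this tower over the bottom cell can reach in degree $n$, and a degree count bounds the exponent of $p$ in the cokernel by $\left[\frac{n-3}{2(p-1)}\right]$. Multiplying $x$ by $\prod_p p^{\left[\frac{n-3}{2(p-1)}\right]}$ kills every obstruction, yielding the upper bounds (over all primes for $k_{\U}$, over odd primes for $k_{\SO}$).

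For the lower bound, the principal new contribution, I would assemble $X$ from products of lens spaces $L^{2p^{i}-1}(p)$ (and, for $p=2$, analogous $2$-torsion complexes), arranged so that each factor $L^{2p^{i}-1}(p)$ contributes a homology class whose order in the $\bu$-cokernel is exactly $p^{i}$, detected by an $i$-th order cohomology operation on its top cell. Dually this is the statement that a suitable $\K$-cohomology class pairs with $x$ to a fraction of denominator $p^{i}$: the Chern character of a line-bundle class develops the denominators $u^{m}/m!$, and the Riemann--Roch integrality $\langle\ch(y),f_*[M]\rangle\in\Z$ forbids realizing $x$ until one multiplies by the whole denominator. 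Counting how many such classes fit into dimension $n$ gives the exponent $\sum_{i\ge1}\left[\frac{n-1}{2p^i}\right]$ at each odd prime, which is the lower bound for $k_{\SO}(n)$; assembling all primes and invoking Legendre's formula $v_p\left(\left[\frac{n-1}{2}\right]!\right)=\sum_{i\ge1}\left[\frac{n-1}{2p^i}\right]$ turns these valuations into the single factorial $\left[\frac{n-1}{2}\right]!$ for $k_{\U}(n)$.

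The main obstacle, and the heart of the matter, is to show that these obstructions are \emph{genuine}: one must prove that the chosen class is not merely nonzero under the first differential but survives as an element of the cokernel of the full order $p^{i}$, i.e.\ that no higher Atiyah--Hirzebruch differential and no higher cohomology operation (with its attendant indeterminacy) can remove it. This is exactly where the higher cohomological operations enter, and it is also the origin of the residual gap between the lower exponent $\sum_i\left[\frac{n-1}{2p^i}\right]$ and the closed-form upper exponent $\left[\frac{n-3}{2(p-1)}\right]$; the two agree asymptotically in the logarithmic scale, and a finer low-dimensional analysis of these operations is what would pin down $k_{\U}(n)$ and $k_{\SO}(n)$ exactly for $n<24$.
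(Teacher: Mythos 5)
Your proposal has two genuine gaps, one in each bound.

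For the upper bound, the reduction to connective $\K$-theory fails at its first step: surjectivity of $\MU_n(X)\to\bu_n(X)$ does not follow from a comparison of Atiyah--Hirzebruch spectral sequences. The comparison map is surjective on $E^2$-terms (because $\MU_*\to\bu_*$ is surjective on coefficients), but a lift of a permanent cycle need not be a permanent cycle, so nothing follows about $E^\infty$ or about the abutment; the equality of the images of the two edge homomorphisms is precisely the kind of statement one may not assume, and the paper never uses it --- it proves the upper bound directly in the AHSS for $\MU_*\otimes\Z_{(p)}$ (Lemma \ref{differentials for cobordisms}). Furthermore, even granting your reduction, a ``degree count'' does not give the exponent $\left[\frac{n-3}{2(p-1)}\right]$: the bottom-row differentials $d^{t}_{n,0}$ with $t=2r(p-1)+1$ can a priori be nonzero for all $t\le n$, so a pure degree count yields only $\left[\frac{n-1}{2(p-1)}\right]$. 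The entire improvement over Brumfiel--Buchstaber consists in showing that the two top differentials vanish: $d^{n}_{n,0}=0$ because the zero column $E_{0,*}^{*}$ is torsion free, and $d^{n-1}_{n,0}=0$, which is Lemma \ref{t6} and requires a genuinely nontrivial argument (mapping $X$ to $K\bigl(\Omega^{\U}_{n-2}\otimes(\Q/\Z_{(p)}),1\bigr)$ and using that its even $\Z_{(p)}$-homology vanishes).

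For the lower bound, the detection mechanism you propose cannot work as stated. The classes realizing the extremal behaviour are torsion classes --- in the paper they live in spaces with $\widetilde H_*(X;\Q)=0$, and in your lens-space products they would be torsion as well --- whereas the Riemann--Roch pairing $\langle \ch(y),f_*[M]\rangle$ is a rational pairing and therefore vanishes identically on torsion homology; Chern-character integrality detects nothing here. One needs a $\Q/\Z_{(p)}$-valued refinement, and constructing it is the main content of the paper: the operations $\widetilde{\Phi}_r$ with $\beta\widetilde{\Phi}_r=\Phi_r$, together with the duality theorem (Theorem \ref{duality}) making the identity $\langle y,\widetilde{\Phi}_r(x)\rangle=-\langle\widetilde{\Phi}_r(y),x\rangle$ available. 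Moreover, the exponent $\sum_{i\ge1}\left[\frac{n-1}{2p^i}\right]$ does not arise from counting lens-space factors that fit into dimension $n$ (such a packing count does not produce this formula); in the paper it arises from the excess combinatorics of $\chi(P^r)$: Lemma \ref{number} identifies $\sum_{i\ge 1}\left[\frac{n-1}{2p^i}\right]$ as the largest $r$ with $n>\ex(r)+2r(p-1)$, and the extremal class is the tautological class $\tilde u_{q,r(p-1)}$ on the fibre $R_p\bigl(q,r(p-1)\bigr)$ of the rationalization map of a truncated $\bu_{(p)}$-space, a class on which $\Phi_r$ is defined by construction and on which $\tilde\beta\chi(P^r)$ acts nontrivially (Lemma \ref{538}). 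Finally, the step you explicitly defer --- that the obstruction keeps its full order $p^{r-1}$ and survives all indeterminacy --- is not a residual technicality: it is Theorem \ref{Lem for example}, proved via the duality theorem, and without it the construction gives no bound beyond the first differential.
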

Note that if $x$ is a homology class and classes $mx$ and $m'x$ are realizable, then the class $\gcd(m,m')x$ is also realizable. Throughout the paper, by saying that a number $k$ is a lower (resp. upper) bound for the numbers $k_{\SO}(n)$ or $k_{\U}(n)$, we always mean that there exists a homology class which cannot be realized after multiplication by a positive integer less than $k$ (resp. any class can be realized after multiplication by $k$). So in the above theorem all inequalities actually mean divisibilities of one number by another. 

Let us now dwell in more detail on previously known results. In \cite{Nov}, S.\,P. Novikov showed that the number $k_{\SO}(n)$ is odd and all obstructions to the realization of a class $x\in H_m(X;\Z)$ in the sense of Steenrod lie in the $p$-torsion of the homology groups $H_ {m-2r(p-1)-1}(X;\Z)$ where $p$ is an odd prime and~$r>0$. Similarly, all obstructions to the realization of a class $x\in H_m(X;\Z)$ by a stably complex manifold lie in the $p$-torsion of the homology groups $H_{m-2r(p-1)-1}(X;\Z)$ where $p$ is a prime and $r>0$. In particular, if the mentioned homology groups have no $p$-torsion, then all obstructions to the realization vanish and the class $x$ is realizable. In \cite{BP}, E. Brown and F. Peterson proved that for $p >2$ the obstructions in the prime~$p$ to the realization of homology classes by oriented and stably complex manifolds coincide. Thus the number $k_{\SO}(n)$ can be explicitly expressed in terms of the number $k_{\U}(n)$, namely $k_{\SO}(n)$ is the maximal odd divisor of $k_{\U}(n)$. Probably, this result can also be extracted from \cite{Nov} but it was not explicitly stated~there.

After Novikov's result the following question arose. Let $n$ be a positive integer. Suppose that the homology groups $H_{m-2r(p-1)-1}(X;\Z)$ of a space $X$ do not contain $p$-torsion for any prime~$p$ and any~$r$ satisfying $2r(p-1)+1>n$. What is the minimal positive integer $A$ such that for any homology class $x\in H_m(X;\Z)$ the class $Ax$ is realizable? This problem was solved by V.\,M. Buchstaber. Namely, he proved the following theorem. 

\begin{Th*}[Buchstaber \cite{Buch.2}]
Let $X$ be a space such that the groups $H_{m-2r(p-1)-1}(X;\Z)$ have no $p$-torsion for any prime $p$ and any $r$ with $2r(p-1)+1>n$. Then any integral homology class $x\in H_m(X;\Z)$ can be realized by a continuous image of a stably complex manifold after multiplication by 
$$
k^s_{\U}(n)=\prod_{p\ \text{prime}}p^{\left[\frac{n-1}{2(p-1)}\right]}.
$$ 
This bound is exact in $n$: for each $n$ there exist a number $m$, a space $X$ that satisfies the above condition and a homology class $x\in H_m(X;\Z)$ such that $x$ cannot be realized by a stably complex manifold after multiplication by a positive integer less than $k^s_{\U}(n)$. 

In particular, for $n=m-1$, any $m$-dimensional homology class can be realized by a continuous image of a stably complex manifold after multiplication by $k^s_{\U}(m-1)$. Thus the number~$k_{\U}(m)$ divides the number~$k^s_{\U}(m-1)$.
\end{Th*}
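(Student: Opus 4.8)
The plan is to translate realizability into a statement about the Atiyah--Hirzebruch spectral sequence (AHSS) and then count, prime by prime, the minimal power of $p$ that is needed. Recall that a class $x \in H_m(X;\Z)$ lies in the image of $\mu_{\U*}\colon \MU_m(X) \to H_m(X;\Z)$ if and only if $x$, viewed as an element of the bottom row $E^2_{m,0} = H_m(X;\MU_0) = H_m(X;\Z)$ of the homology AHSS with $E^2_{s,t} = H_s(X;\MU_t)$, is a permanent cycle, i.e. is annihilated by every differential $d_r\colon E^r_{m,0} \to E^r_{m-r,\,r-1}$. Since $\MU_*(\pt)$ is a free abelian group concentrated in even degrees, each target is a subquotient of $H_{m-r}(X;\Z)\otimes \MU_{r-1}$, which vanishes unless $r$ is odd; and since rationally the AHSS collapses, every $d_r(x)$ is a torsion element. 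Thus it suffices to work one prime at a time: I would fix $p$, localize, and show that $p^{a_p}x$ is a permanent cycle, where $a_p = \bigl[\tfrac{n-1}{2(p-1)}\bigr]$; taking the product over all $p$ then gives realizability of $k^s_{\U}(n)\,x$.

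After localizing at $p$, the first step is to pin down which differentials on the bottom row can be non-zero. Using the splitting of $\MU_{(p)}$ into suspensions of the Brown--Peterson spectrum $\mathrm{BP}$, whose coefficients $\mathrm{BP}_* = \Z_{(p)}[v_1,v_2,\dots]$ with $|v_i| = 2(p^i-1)$ live in degrees divisible by $2(p-1)$, one sees (in agreement with Novikov's theorem quoted above) that the only potentially non-zero differentials are $d_{2j(p-1)+1}$ for $j \ge 1$, landing in a subquotient of $H_{m-2j(p-1)-1}(X;\Z)\otimes\MU_{2j(p-1)}$. Because $\MU_{2j(p-1)}$ is free, this target has $p$-torsion only if $H_{m-2j(p-1)-1}(X;\Z)$ does; but $d_{2j(p-1)+1}(x)$ is $p$-torsion, so whenever $2j(p-1)+1 > n$ the hypothesis on $X$ forces $d_{2j(p-1)+1}(x)=0$. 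Hence only the differentials with $j=1,\dots,a_p$ can obstruct $x$.

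The heart of the matter is the claim that each bottom-row differential $d_{2j(p-1)+1}$ has image of exponent $p$. Granting this, I would argue by induction: if $y_{j-1} := p^{\,j-1}x$ survives to the page $E^{2j(p-1)+1}$, then $d_{2j(p-1)+1}(p^{\,j}x) = p\cdot d_{2j(p-1)+1}(y_{j-1}) = 0$ because $d_{2j(p-1)+1}(y_{j-1})$ is killed by $p$, so $y_j := p^{\,j}x$ survives one page further; after $a_p$ steps $p^{a_p}x$ is a permanent cycle. The claim is clear for $j=1$, where $d_{2p-1}$ is given (dually) by the Milnor primitive $Q_1$ and hence factors through reduction mod $p$. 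The genuine obstacle is $j\ge 2$: these are secondary and higher operations, carrying indeterminacy, and the relevant attaching maps (image-of-$J$ elements) can have order larger than $p$. I would overcome this by passing to connective complex $K$-theory: the orientation $\MU \to \bu$ sends $x$ into the $\bu$-AHSS, whose coefficient ring is $v_1$-polynomial, so that the bottom-row differentials there are precisely the iterated Milnor operations generated by $Q_1$, each an $\mathbb{F}_p$-operation of exponent $p$. Showing that these $\bu$-differentials already detect the $\MU$-obstructions of $x$ in the range $r\le n$---so that exponent $p$ suffices at every stage---is the technical crux, and it is here that the Chern character and the Bernoulli denominators enter.

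Finally, for exactness one must exhibit, for each $n$, a space $X$ satisfying the torsion hypothesis and a class that cannot be realized after multiplication by anything properly dividing $k^s_{\U}(n)$. I would build $X$ as a wedge over primes of suitably suspended spaces whose reduced homology realizes, at the prime $p$, a chain of $a_p$ non-trivial iterated $Q_1$-operations---concretely, truncated suspensions of lens spaces (equivalently, skeleta of $B\Z/p$)---so that the successive differentials $d_{2j(p-1)+1}$ are forced to be non-zero and each removes exactly one factor of $p$. Verifying that the no-$p$-torsion condition holds in the dangerous degrees while the full obstruction of order $p^{a_p}$ genuinely survives is the delicate bookkeeping step; the resulting lower bound then matches the upper bound term by term, giving exactness. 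The stated corollary is then immediate: for $n=m-1$ the groups $H_{m-2r(p-1)-1}(X;\Z)$ with $2r(p-1)+1>n$ sit in non-positive degrees and so are automatically free of $p$-torsion for every $X$, whence every $m$-dimensional class is realizable after multiplication by $k^s_{\U}(m-1)$.
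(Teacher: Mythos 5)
Your overall skeleton is sound and is essentially the same as the machinery the paper develops in Section~5 for its own (slightly sharper) upper bound: realizability of a bottom-row class is permanence in the AHSS for $\MU_*\otimes\Z_{(p)}$; only the differentials $d_{2j(p-1)+1}$ can be non-zero on the bottom row (statement~1 of Lemma~\ref{differentials for cobordisms}); the no-$p$-torsion hypothesis kills every such differential with $2j(p-1)+1>n$ because its values are $p$-torsion elements of groups without $p$-torsion; and an induction on $j$, multiplying by $p$ at each of the $a_p=\bigl[\tfrac{n-1}{2(p-1)}\bigr]$ remaining stages, gives the bound. Your count of the relevant stages and the deduction of the final corollary are correct.

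The problem is the step you yourself call ``the technical crux'': the claim that $d_{2j(p-1)+1}$ evaluated on the class of $p^{j-1}x$ has exponent $p$ modulo images of previous differentials. This is left unproven, and the route you sketch for it would fail. The orientation $\MU\to\bu$ gives information in the wrong direction: it sends $\MU$-permanent cycles to $\bu$-permanent cycles, so the $\bu$-AHSS can only see a \emph{quotient} of the $\MU$-obstruction; to ``detect'' the $\MU$-differential there you would need the coefficient map $\MU_{2j(p-1)}\otimes\Z_{(p)}\to\bu_{2j(p-1)}\otimes\Z_{(p)}$ to be injective on the relevant subquotients, and it is very far from injective. (Your description of the higher $\bu$-differentials as ``iterated Milnor operations generated by $Q_1$'' is also inaccurate: they are higher-order operations --- the Postnikov-invariant operations $\Phi_r$ of Section~2 --- and only their values on classes of the form $p^{r-1}x$ reduce to primary operations; that reduction is exactly Theorem~\ref{Buch.2}, itself a non-trivial theorem of Buchstaber, not something one may quote as ``clear''.) The paper closes this gap by a different, and much more elementary, device: by naturality of the AHSS one pulls the whole question back along the map classifying $x$ to the universal example $K(\Z_{(p)})$ (after Spanier---Whitehead dualizing; see the proof of Lemma~\ref{differentials for cobordisms}), where the needed statement is the classical fact that every positive-degree class of $H^{*}\bigl(K(\Z_{(p)});\Z_{(p)}\bigr)$ has order $p$, i.e.\ every positive-degree integral Steenrod operation factors as $\tilde{\beta}\circ\theta_p\circ(\bmod\ p)$ (Remark~\ref{integral Steenrod}); no Chern character or Bernoulli-denominator input is needed. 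Finally, your exactness paragraph is also only a sketch with the delicate point left open: skeleta of $B\Z/p$ by themselves do not obviously satisfy the torsion hypothesis in the dangerous degrees while carrying an obstruction of full order $p^{a_p}$; for comparison, the sharp lower-bound constructions in the paper (Sections~2--4) require the duality theorem for the operations $\widetilde{\Phi}_r$ and the spaces $R_p(q,s)$, which is considerably more work.
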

Using the above result of Novikov, it is easy to obtain a similar theorem for the case of realization by oriented manifolds. 

Independently, G. Brumfiel obtained a similar result, but under a weaker assumption.
\begin{Th*}[Brumfiel \cite{Brumfiel}]
Any $(q+n)$-dimensional homology class $x\in H_{q+n}(X;\Z)$ in a $(q-1)$-connected space $X$ can be realized in the sense of Steenrod after multiplication by
$$
k^s_{\SO}(n)=\prod_{p\ \text{prime},\ p>2}p^{\left[\frac{n-1}{2(p-1)}\right]}.
$$
This bound is exact in $n$: for each $n$ there exist a $(q-1)$-connected space $X$ and a homology class $x\in H_{q+n}(X;\Z)$ such that $x$ cannot be realized in the sense of Steenrod after multiplication by a positive integer less than $k^s_{\SO}(n)$.

In particular, for $q=1$, any $m$-dimensional homology class can be realized in the sense of Steenrod after multiplication by $k^s_{\SO}(m-1)$. Thus the number  $k_{\SO}(m)$ divides the number $k^s_{\SO}(m-1)$.
\end{Th*}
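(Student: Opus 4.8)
The plan is to reformulate realizability spectrally and then to run the Atiyah--Hirzebruch spectral sequence one prime at a time. A class $x\in H_{q+n}(X;\Z)$ is realized in the sense of Steenrod precisely when it lies in the image of $\mu_{\SO*}\colon \MSO_{q+n}(X)\to H_{q+n}(X;\Z)$, which is the edge homomorphism of the Atiyah--Hirzebruch spectral sequence $E^2_{s,t}=H_s(X;\MSO_t(\pt))\Rightarrow \MSO_{s+t}(X)$. Since no differential can enter the bottom filtration $E^r_{q+n,0}$, the image of $\mu_{\SO*}$ is exactly the subgroup of permanent cycles in $H_{q+n}(X;\Z)$; thus $x$ is realizable if and only if $d_r(x)=0$ for all $r$. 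As $k_{\SO}(n)$ is odd by Novikov's theorem, it suffices to treat each odd prime $p$ after localization and to bound the power of $p$ needed to make $x$ a permanent cycle.

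Fix an odd prime $p$. By Novikov's theorem the only differentials that can be nonzero on $x$ are those landing in the $p$-torsion of $H_{q+n-2r(p-1)-1}(X;\Z)$, i.e.\ the differentials $d_{2r(p-1)+1}$ with $r\ge 1$. Here the $(q-1)$-connectedness enters: we have $H_s(X;\Z)=0$ for $1\le s\le q-1$ and $H_0(X;\Z)$ is torsion-free, so a nontrivial $p$-torsion obstruction in degree $q+n-2r(p-1)-1$ can occur only when that degree is at least $q$, that is
\[
q+n-2r(p-1)-1\ge q,\qquad\text{equivalently}\qquad r\le \left[\frac{n-1}{2(p-1)}\right].
\]
Hence at most $N_p:=\left[\frac{n-1}{2(p-1)}\right]$ differentials can obstruct the realizability of $x$ at the prime $p$.

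Next I would show that each of these $N_p$ obstructions is killed by a single factor of $p$. The structural input is that, after localization, $\MSO_{(p)}$ splits as a wedge of suspensions of $BP$, so the first nontrivial differential $d_{2p-1}$ is (dual to) the Milnor primitive $Q_1=P^1\beta-\beta P^1$, and every higher obstruction $d_{2r(p-1)+1}(x)$ is a higher homology operation that factors through a Bockstein and therefore takes values annihilated by $p$. Since differentials are additive, at each level a single multiplication by $p$ annihilates the current obstruction and lets the class proceed to the next level; processing the finite tower of obstructions inductively shows that $p^{N_p}x$ is a permanent cycle. The main obstacle is organizing this cleanly, because the higher differentials are defined only on the kernels of the preceding ones and carry indeterminacy; the bookkeeping is therefore best done through the Postnikov tower of the fibre $F$ of $\mu_{\SO}\colon \MSO\to H\Z$, whose $k$-invariants in the relevant range are Bocksteins of Steenrod powers. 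Taking the product of the resulting local bounds over all odd primes gives that $k^s_{\SO}(n)=\prod_{p>2}p^{N_p}$ suffices.

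For exactness I would construct, for each $n$, a $(q-1)$-connected space together with a class $x$ on which the entire tower of differentials is forced to be nonzero, so that no proper divisor of $k^s_{\SO}(n)$ realizes $x$. A convenient model is assembled from Eilenberg--MacLane spaces $K(\Z/p^a,q)$ (or from mod-$p$ Moore and lens complexes) in which the actions of the Steenrod powers $P^i$ and the Bockstein are explicitly known; choosing the dimensions so that all $N_p$ operations act nontrivially and assemble into a nonzero $N_p$-fold iterated (secondary) operation yields a class that genuinely requires the factor $p^{N_p}$. Verifying the non-vanishing of this iterated operation is the delicate technical point, but it is exactly dual to the survival analysis above. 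Finally, the case $q=1$ is immediate: any $m$-dimensional class lives, after replacing $X$ by a connected cover, in a $0$-connected space, so applying the bound with $n=m-1$ shows that $k_{\SO}(m)$ divides $k^s_{\SO}(m-1)$.
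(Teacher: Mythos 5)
The paper never proves this statement: it is quoted as Brumfiel's theorem from \cite{Brumfiel} and used only as background, so your proposal can only be compared with the closely parallel machinery the author builds for his own bounds, namely the AHSS argument of Lemma~\ref{differentials for cobordisms} and Theorem~\ref{upper boundry for realization} (upper bounds) and sections 2--4 (lower bounds).

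Your first half (that $k^s_{\SO}(n)$ suffices) is essentially the right argument and coincides in structure with the paper's own $\MU$-upper bound: realizability is the permanent-cycle condition on the bottom row of the AHSS; evenness of $\MSO_*\otimes\Z_{(p)}$ and Novikov's theorem leave only the differentials $d_{2r(p-1)+1}$; and $(q-1)$-connectedness forces the ($p$-torsion-valued) obstruction to vanish unless $r\le N_p=\bigl[\tfrac{n-1}{2(p-1)}\bigr]$, so at most $N_p$ differentials can act. The step you assert rather than prove is that each obstruction is killed by a single factor of $p$. Saying that a higher differential ``factors through a Bockstein'' is not a justification: higher differentials are defined only on survivors and take values in subquotients, with indeterminacy, so their exponent is not something one can read off. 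The clean fix is exactly the paper's proof of Lemma~\ref{differentials for cobordisms}: by May's theorem (Remark~\ref{integral Steenrod}) every positive-degree class in $H^{*}\bigl(K(\Z_{(p)});\Z_{(p)}\bigr)$ has order $p$, hence in the AHSS of the universal example $K(\Z_{(p)})$ every differential on a multiple of the fundamental class lands in a group of exponent $p$, and naturality along the map classifying (the Spanier--Whitehead dual of) $x$ transports ``one factor of $p$ per differential'' to $X$. This soft spot is repairable, so I would not call it a gap.

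The genuine gap is the exactness half. Producing, for each $n$, a $(q-1)$-connected space and a class that cannot be realized after multiplication by any positive integer smaller than $\prod_{p>2}p^{N_p}$ \emph{is} the content of Brumfiel's theorem, and your proposal replaces it by a restatement of the goal: ``choose the dimensions so that all $N_p$ operations act nontrivially and assemble into a nonzero iterated operation.'' This is not ``exactly dual to the survival analysis'': the survival analysis only bounds the exponent of each obstruction from above and needs no control of indeterminacy, whereas exactness requires a space in which the top differential $d_{2N_p(p-1)+1}$ is nonzero on $p^{N_p-1}x$ \emph{modulo the images of all previous differentials}, i.e.\ a nonvanishing statement for a higher operation past its indeterminacy. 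Controlling that indeterminacy is what forces the heavy machinery in this paper -- the duality theorem (Theorem~\ref{duality}), the criterion of Theorem~\ref{Lem for example} (vanishing rational homology plus $\tilde{\beta}\chi(P^r)(x)\ne 0$ with $\Phi_r$ defined on $x$), and the spaces $R_p(q,s)$ of section 4 -- and Brumfiel's and Buchstaber's original exactness proofs require comparable work; nothing in your sketch verifies that a wedge of $K(\Z/p^a,q)$'s or Moore complexes has this property, nor how the per-prime examples are assembled into one class defeating every proper divisor of $k^s_{\SO}(n)$ (compare the wedge-and-sum device in Theorem~\ref{win}). Finally, a small slip: for $q=1$ you should pass to a path component of $X$, not to a ``connected cover,'' since covers change homology.
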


For an integer $a$ and a prime $p$ we denote by $\nu_p(a)$ the degree of $p$ in the prime factorization of $a$. The results of Buchstaber and Brumfiel provide the following inequality: 
\begin{equation}\label{BB upper}
 \nu_p\bigl(k_{\U}(n)\bigr) \le \left[\frac{n-2}{2(p-1)}\right].
\end{equation}
Note that this bound is not exact. Indeed, in \cite{Thom} Thom proved that $k_{\SO}(6)=1$ and so $\nu_3\bigl(k_{\U}(6)\bigr)=0$, while the right-hand side of \eqref{BB upper} is equal to $1$ in this case. This was noticed by Brumfiel in \cite{Brumfiel} and he raised the question about the exact computation of $k_{\SO}(n)$. 

Since the obstructions to the realization are stable in degrees $n-r$, provided that $r<n/2$, the results of Buchstaber and Brumfiel also imply the following inequality: 
\begin{equation}\label{BB low}
\frac{1}{2}\left[\frac{n-1}{2(p-1)}\right] \le \nu_p\bigl(k_{\U}(n)\bigr). 
\end{equation} 
Note that this bound was the best known until now. 

Let us now return to results of the present paper. Theorem~\ref{main theorem beginning introduction} can be rewritten in the following equivalent form. 
\begin{Th}[Theorem~\ref{win} and Theorem~\ref{upper boundry for realization}]\label{Main theorem intro} 
	For every dimension $n$ and every prime $p$ the following inequalities hold:
	\[
	\sum_{i=1}^{\infty} \left[\frac{n-1}{2p^i}\right] \le  \nu_p\bigl(k_{\U}(n)\bigr) \le \left[\frac{n-3}{2(p-1)}\right].
	\]
\end{Th}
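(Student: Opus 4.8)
I would prove the two inequalities separately, since they correspond to the two cited theorems; the lower bound is the essential new content, while the upper bound is a mild sharpening of Brumfiel--Buchstaber.

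For the lower bound (Theorem~\ref{win}) the first move is to replace complex bordism by connective complex $\K$-theory $\bu$. The orientation $\MU\to\bu$ sits in the factorization $\MU\to\bu\to H\Z$ of the bottom Postnikov truncation, so $\Im\bigl(\MU_n(X)\to H_n(X;\Z)\bigr)\subseteq\Im\bigl(\bu_n(X)\to H_n(X;\Z)\bigr)$; hence any class that fails to be $\bu$-realizable also fails to be $\MU$-realizable, and it suffices to exhibit a class carrying a $\bu$-obstruction of $p$-exponent $\sum_{i\ge1}\left[\frac{n-1}{2p^i}\right]$. Working with $\bu$ is advantageous because $\pi_*\bu=\Z[t]$ is torsion-free and the relevant spectral-sequence differentials are governed rationally by the Chern character $\ch$, whose component in degree $2j$ carries the denominator $j!$. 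The whole strategy is organized around the arithmetic identity $\sum_{i\ge1}\left[\frac{n-1}{2p^i}\right]=\nu_p\!\left(\left[\frac{n-1}{2}\right]!\,\right)$, so that the factorial produced by the Chern character is matched term by term.

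Next I would fix an explicit space realizing this bound. The natural candidate is a product of standard lens spaces $L^{2a-1}(p)$ (equivalently, a carefully chosen skeleton of a product of copies of $B\Z/p$), with the dimensions of the factors arranged so that $\left[\frac{n-1}{2}\right]$ is the total complex degree and $H_n(X;\Z)$ contains a distinguished $p$-torsion class $x$ assembled from the fundamental classes of the factors. The core computation is then the exact order of the image of $x$ in $\mathrm{coker}\bigl(\bu_n(X)\to H_n(X;\Z)\bigr)$, which I would extract from the known cyclic structure of $\widetilde{\K}^0$ of lens spaces together with the Chern-character denominators. I expect this last computation to be the main obstacle: the crude estimate of one factor of $p$ per active degree only reproduces the old bound \eqref{BB low}, and the real work is to show that the successive $\K$-theoretic obstructions coming from the several factors \emph{compound multiplicatively}, accumulating the full factorial $\nu_p\!\left(\left[\frac{n-1}{2}\right]!\right)$ rather than a bounded amount.

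For the upper bound (Theorem~\ref{upper boundry for realization}) I would begin from the consequence \eqref{BB upper} of the Brumfiel--Buchstaber theorem, namely $\nu_p\bigl(k_{\U}(n)\bigr)\le\left[\frac{n-2}{2(p-1)}\right]$, obtained from $k_{\U}(n)\mid k^s_{\U}(n-1)$. Since $\left[\frac{n-2}{2(p-1)}\right]=\left[\frac{n-3}{2(p-1)}\right]$ unless $n\equiv2\pmod{2(p-1)}$, the whole improvement is concentrated in that single residue class, where the top Novikov obstruction to realizing an $n$-dimensional class lands in the $p$-torsion of $H_1(X;\Z)$. The plan is to show that this topmost obstruction vanishes identically: the associated operation is degree-reducing by $n-1$ onto a one-dimensional group, and an excess/degree analysis of the relevant $k$-invariant of $\MU$ localized at $p$ (essentially $\beta P^1$ and its iterates) forces it to be zero, exactly the phenomenon behind Thom's equality $k_{\SO}(6)=1$. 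All remaining obstructions land in $H_{\ge2}(X;\Z)$ and are controlled, one factor of $p$ apiece, by the Brumfiel--Buchstaber machinery, leaving precisely $\left[\frac{n-3}{2(p-1)}\right]$ as the bound.
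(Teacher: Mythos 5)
Your lower-bound strategy has a fatal flaw, and it is located exactly at the step you flag as ``the main obstacle.'' First, the proposed example space cannot work: in a product of mod-$p$ lens spaces (or of copies of $B\Z/p$, or any skeleton thereof) every torsion class has exponent $p$, since the K\"unneth theorem only produces $\Z/p$'s from $\Z/p$'s; hence for your ``distinguished $p$-torsion class $x$'' the class $px=0$ is trivially $\U$-realizable, so no such class can force more than a single factor of $p$. The free classes available in these spaces are products of lens-space fundamental classes and are realizable outright. Second, the hoped-for ``multiplicative compounding'' of obstructions is precisely what is not available: by Theorem~\ref{Buch.2} each differential $d_{2r(p-1)+1}$ contributes at most one factor of $p$, and only \emph{modulo the images of all earlier differentials}; in spaces with small cohomology this indeterminacy swallows the higher obstructions, which is exactly why the classical examples never got past~\eqref{BB low}. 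The paper's route is structurally different and involves no compounding at all: it constructs a rationally acyclic space $R_p(q,s)$ --- the homotopy fibre of a map from the truncated localized space $\bu_{(p)}(q,s)$ to a product of rational Eilenberg--MacLane spaces --- whose fundamental class lifts to $\bu_{(p)}(q,s)$ by construction, so that \emph{all} lower obstructions vanish identically and the single operation $\Phi_r$ of maximal order $r=\sum_i\bigl[\frac{n-1}{2p^i}\bigr]$ is defined on it. Theorem~\ref{Lem for example} (which requires the duality Theorem~\ref{duality} to control indeterminacy) together with the nonvanishing $\tilde{\beta}\chi(P^r)\ne 0$ (Lemmas~\ref{action of chi(p^r)} and~\ref{tech for chi(P^r)}) then shows that $p^{r-1}a$ is not $\U$-realizable for a suitable class $a$ of high torsion order. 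One maximal-order differential produces the entire exponent; the factorial identity $\sum_i\bigl[\frac{n-1}{2p^i}\bigr]=\nu_p\bigl(\bigl[\frac{n-1}{2}\bigr]!\bigr)$ enters through the combinatorial Lemma~\ref{number} (the largest $r$ with $n>\ex(r)+2r(p-1)$), not through Chern character denominators.

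For the upper bound, your localization of the improvement to the residue class $n\equiv 2\pmod{2(p-1)}$, where the critical differential lands in $H_1$, is correct and matches the paper. But the claim that this obstruction dies by ``an excess/degree analysis'' does not go through. The differential is, modulo earlier images, of the form $\sum_i\tilde{\beta}\theta_{p,i}(x)b_i$ with $\theta_{p,i}\in\mathcal{A}_p$ of degree $n-2$ applied to an $n$-dimensional homology class; dualizing (Lemma~\ref{7895}) turns this into evaluating $\chi(\theta_{p,i})$ on mod-$p$ reductions of $2$-dimensional integral cohomology classes, and there the low-excess, high-degree monomials $P^{p^{k-1}}\cdots P^pP^1$ act as iterated $p$-th powers and do \emph{not} vanish. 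So no formal excess argument kills the operation; what must be proved is the vanishing of the \emph{differential}, and the paper does this (Lemma~\ref{t6}) by a naturality trick: detect a putative nonzero value $z\in H_1\bigl(X;\Omega^{\U}_{n-2}\otimes\Z_{(p)}\bigr)$ by a map to $K\bigl(\Omega^{\U}_{n-2}\otimes(\Q/\Z_{(p)}),1\bigr)$, note that this space has $\Z_{(p)}$-homology only in degree zero and odd degrees while $\Omega^{\U}_{\mathrm{odd}}=0$, so every differential in its Atiyah---Hirzebruch spectral sequence vanishes, and derive a contradiction by functoriality. Without this idea (or an equivalent one) your argument stops at the Brumfiel---Buchstaber bound~\eqref{BB upper}.
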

The upper bound here is a slight improvement of the Brumfiel---Buchstaber bound \eqref{BB upper}. But  the following corollaries demonstrate that our lower bound is much better than the previously known bound~\eqref{BB low}.
\begin{Cor}
If there exists a positive integer $l$ such that $2p^l+1\le n<2p^l+1+2(p-1)$, then the lower and upper bounds in Theorem~\ref{Main theorem intro} coincide and therefore 
\[
	\nu_p\bigl(k_{\U}(n)\bigr)=\frac{p^l-1}{p-1}.
\]
\end{Cor}

Thus, for every prime $p$ there is an infinite series of dimensions $n$ such that our lower bound for the number $\nu_p\bigl(k_{\U}(n)\bigr)$ is exact. Unfortunately, these series of dimensions are different for different primes, and therefore it is not clear whether there exists an infinite series of dimensions for which the lower bound is exact for the entire number $k_{\U}(n)$.
\begin{Cor}
The ratio of the upper bound to the lower bound for the number  $\nu_p\bigl(k_{\U}(n)\bigr)$ from Theorem~\ref{Main theorem intro} tends to~$1$ as~$n$ tends to infinity.
\end{Cor}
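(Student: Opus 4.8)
The plan is to reduce the claim to a pair of elementary asymptotic estimates for the two explicit expressions. Write $U(n)=\left[\frac{n-3}{2(p-1)}\right]$ for the upper bound and $L(n)=\sum_{i=1}^{\infty}\left[\frac{n-1}{2p^i}\right]$ for the lower bound. I would show that both $U(n)$ and $L(n)$ are asymptotically equal to $\frac{n}{2(p-1)}$ as $n\to\infty$; the corollary then follows at once, since the ratio $U(n)/L(n)$ is a quotient of two quantities sharing the same linear leading term.

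For the upper bound this is immediate from $[x]=x+O(1)$: one has $U(n)=\frac{n-3}{2(p-1)}+O(1)=\frac{n}{2(p-1)}+O(1)$. For the lower bound the point is to compare the sum of floors with the full geometric series $\sum_{i=1}^{\infty}\frac{n-1}{2p^i}=\frac{n-1}{2(p-1)}$. First I would note that only the terms with $p^i\le (n-1)/2$ contribute to $L(n)$, so the number $N$ of nonzero summands is $O(\log n)$. Applying the bound $\frac{n-1}{2p^i}-1<\left[\frac{n-1}{2p^i}\right]\le \frac{n-1}{2p^i}$ to each of these $N$ terms, and bounding the truncated tail $\sum_{i>N}\frac{n-1}{2p^i}<1$ of the geometric series, I obtain $L(n)=\frac{n-1}{2(p-1)}+O(\log n)$.

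Combining the two estimates, both bounds equal $\frac{n}{2(p-1)}\bigl(1+o(1)\bigr)$, and therefore $U(n)/L(n)\to 1$. The only point requiring genuine care is the error term for $L(n)$: one must check that the accumulated rounding error is controlled by the number $O(\log n)$ of nonzero summands rather than by the infinitely many terms of the geometric series, so that it is indeed negligible against the linear main term $\frac{n}{2(p-1)}$. Beyond this observation there is no substantial obstacle, the argument being a routine asymptotic analysis of the two closed-form expressions.
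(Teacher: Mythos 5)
Your proposal is correct and takes essentially the same route as the paper: the paper likewise compares $\sum_{i\ge 1}\bigl[\frac{n-1}{2p^i}\bigr]$ with the geometric series $\frac{n-1}{2(p-1)}$, controls the rounding error by the $O(\log n)$ count of summands with $p^i\le (n-1)/2$, and bounds the tail by a constant, concluding that the difference of the two bounds is $O(\log n)$ against linear growth. One trivial slip: your tail bound $\sum_{i>N}\frac{n-1}{2p^i}<1$ is not quite right (the tail equals $\frac{n-1}{2p^N(p-1)}$, which can approach $\frac{p}{p-1}\le 2$, e.g.\ for $p=2$), but any $O(1)$ bound suffices, so the argument is unaffected.
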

\begin{proof}
It is readily verified that for a positive integer $n$ and a prime $p$ the following holds. 
\begin{multline*}
\left[\frac{n-3}{2(p-1)}\right] - \sum_{i=1}^{\infty} \left[\frac{n-1}{2p^i}\right] \le
\frac{n-1}{2(p-1)}  -\frac{1}{p-1} - \sum_{i=1}^{\infty} \left[\frac{n-1}{2p^i}\right] = \\ 
=\sum_{i=1}^{\infty}\left(\frac{n-1}{2p^i} - \left[\frac{n-1}{2p^i}\right]\right)-\frac{1}{p-1}= 
\sum_{i=1}^{\left[\log_p((n-1)/2)\right]}\left(\frac{n-1}{2p^i} - \left[\frac{n-1}{2p^i}\right]\right)+\\ 
+\sum_{i=\left[\log_p((n-1)/2)\right]+1}^{\infty}\frac{n-1}{2p^i} \ \ - \ \frac{1}{p-1} < \log_p\left(\frac{n-1}{2}\right) +1
\end{multline*}
Therefore 
\[
1\le \frac{\left[\frac{n-3}{2(p-1)}\right]}{\sum_{i=1}^{\infty} \left[\frac{n-1}{2p^i}\right]}\le 1-\frac{\log_p\left(\frac{n-1}{2}\right) +1}{\sum_{i=1}^{\infty} \left[\frac{n-1}{2p^i}\right]}, 
\]
so this ratio tends to $1$ as n tends to infinity. 
\end{proof}

One can also consider the problem of realization of homology classes by continuous images of manifolds from a certain class $\mathcal{X}$. For instance, one can take for $\mathcal{X}$ the class of stably complex manifolds admitting certain additional structure in stable tangent bundle. 

\begin{Cor}\label{537}
Let $\mathcal{X}$ be a class of smooth closed oriented manifolds such that any stably parallelizable manifold lies in $\mathcal{X}$ and any manifold $M\in  \mathcal{X}$ can be endowed with stably complex structure. Denote by~$k_{\mathcal{X}}(n)$ the minimal positive integer such that every integral $n$-dimensional homology class can be realized by a continuous image of a manifold from $\mathcal{X}$ after multiplication by $k_{\mathcal{X}}(n)$. Then the number $k_{\mathcal{X}}(n)$ is divisible by $\left[ \frac{n-1}{2}\right]!$ and the ratio 
\begin{equation}\label{9999}
\frac{\nu_p\bigl(k_{\mathcal{X}}(n)\bigr)}{\sum_{i=1}^{\infty} \left[\frac{n-1}{2p^i}\right]}
\end{equation}
tends to $1$ as $n$ tends to infinity. 
\end{Cor}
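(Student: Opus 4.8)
The plan is to sandwich $k_{\mathcal X}(n)$ between the realization numbers of two extremal classes of manifolds and then to apply the main theorem together with the asymptotic comparison proved above. Denote by $k_{\mathrm{par}}(n)$ the minimal positive integer such that every $n$-dimensional integral homology class becomes realizable by a continuous image of a stably parallelizable manifold after multiplication by $k_{\mathrm{par}}(n)$; this is the realization number associated with framed bordism, i.e.\ with the sphere spectrum. The first, purely formal, step is the monotonicity of realization numbers: if one class of manifolds is contained in another, then realizability by the smaller class implies realizability by the larger one, so the realization number of the smaller class is divisible by that of the larger. Since every stably parallelizable manifold lies in $\mathcal X$ and every manifold of $\mathcal X$ admits a stably complex structure, this gives
\[
k_{\U}(n)\mid k_{\mathcal X}(n)\mid k_{\mathrm{par}}(n).
\]

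For the divisibility by $\left[\frac{n-1}{2}\right]!$ and for the lower estimate of the ratio \eqref{9999} I would use only the left divisibility. By Theorem~\ref{Main theorem intro} we have $\nu_p\bigl(k_{\U}(n)\bigr)\ge\sum_{i=1}^{\infty}\left[\frac{n-1}{2p^i}\right]$ for every prime $p$, while Legendre's formula combined with the identity $\bigl[[x]/p^i\bigr]=\bigl[x/p^i\bigr]$ yields $\nu_p\!\bigl(\left[\frac{n-1}{2}\right]!\bigr)=\sum_{i=1}^{\infty}\left[\frac{n-1}{2p^i}\right]$. Hence $\left[\frac{n-1}{2}\right]!$ divides $k_{\U}(n)$ and therefore divides $k_{\mathcal X}(n)$; in particular the ratio \eqref{9999} is at least $1$ for every $p$.

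It remains to bound the ratio from above, and here I would use the right divisibility to estimate $\nu_p\bigl(k_{\mathrm{par}}(n)\bigr)$. A class $x\in H_n(X;\Z)$ is realizable by a continuous image of a stably parallelizable manifold exactly when it lies in the image of the edge homomorphism $\pi^s_n(X_+)\to H_n(X;\Z)$; thus the obstructions are the differentials issuing from the bottom row $E_2^{n,0}=H_n(X;\Z)$ of the Atiyah--Hirzebruch spectral sequence of the sphere spectrum. These differentials land in subquotients of $H_{n-r}\bigl(X;\pi^s_{r-1}\bigr)$, so the $p$-primary part of $k_{\mathrm{par}}(n)$ is governed by the $p$-torsion of the stable stems $\pi^s_{r-1}$ for $r\le n$, which in the relevant degrees $2k(p-1)-1$ is the image of the $J$-homomorphism. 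The aim of this step is to prove that the universal multiplier forced by these differentials still satisfies $\nu_p\bigl(k_{\mathrm{par}}(n)\bigr)\le(1+o(1))\sum_{i=1}^{\infty}\left[\frac{n-1}{2p^i}\right]$; concretely, I would try to establish a Brumfiel--Buchstaber-type estimate asymptotic to the lower bound, for instance $\nu_p\bigl(k_{\mathrm{par}}(n)\bigr)\le\left[\frac{n-3}{2(p-1)}\right]$. Together with the lower bound this squeezes the ratio \eqref{9999} to $1$.

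The main obstacle is this last estimate. The naive bound, obtained by multiplying together the full exponents of all the groups $\pi^s_{r-1}$ that can receive a differential, overcounts: the image-of-$J$ contributions add a term $\nu_p(K!)$ with $K=\bigl[\frac{n}{2(p-1)}\bigr]$, producing a valuation of size $\tfrac{p}{p-1}\cdot\tfrac{n}{2(p-1)}$, whose ratio to the lower bound tends to $\tfrac{p}{p-1}\ne1$. The delicate point is therefore to show that the higher differentials do not actually compound their full orders into the universal multiplier, i.e.\ that framed realization is asymptotically no harder than stably complex realization. This demands a genuine analysis of the spectral sequence of the sphere spectrum (or of its $\K$-theory localization, where the image of $J$ is detected by the $e$-invariant) rather than a crude estimate on the exponents of the stable stems, and it is the only part of the argument that does not reduce formally to the main theorem.
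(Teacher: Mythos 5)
Your overall architecture is exactly the paper's: you sandwich $k_{\mathcal X}(n)$ between $k_{\U}(n)$ and the framed realization number (the paper calls it $k_{e}(n)$, you call it $k_{\mathrm{par}}(n)$), obtain the divisibility by $\left[\frac{n-1}{2}\right]!$ from the main theorem plus Legendre's formula, and reduce the asymptotic statement about the ratio \eqref{9999} to an upper bound on $\nu_p\bigl(k_{\mathrm{par}}(n)\bigr)$ of size $(1+o(1))\,\frac{n}{2(p-1)}$. The lower-bound half of your argument is complete and correct, and matches the paper.

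However, the upper bound on $\nu_p\bigl(k_{\mathrm{par}}(n)\bigr)$ is a genuine gap, and you say so yourself: you state it as ``the aim of this step,'' correctly observe that the naive estimate (compounding the exponents of the image-of-$J$ summands of the stable stems) gives only a valuation of order $\frac{p}{p-1}\cdot\frac{n}{2(p-1)}$, hence a limiting ratio $\frac{p}{p-1}\ne 1$, and then leave the required finer analysis unexecuted. This is precisely the step the paper does not prove from scratch either --- it imports it: by Theorem 1.3 of the cited paper of A.~Mathew (\emph{Torsion exponents in stable homotopy and the Hurewicz homomorphism}), one has $\nu_p\bigl(k_{e}(n)\bigr)\le\left[\frac{n+1}{2(p-1)}\right]+3$, which is exactly the Brumfiel---Buchstaber-type bound you were hoping to establish (up to an additive constant); the remaining comparison with $\sum_{i=1}^{\infty}\left[\frac{n-1}{2p^i}\right]$ is then an elementary $O(\log_p n)$ estimate, as in the paper's proof. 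The missing ingredient cannot be recovered by a crude estimate on the exponents of the stable stems --- your own computation shows that fails --- nor by a short Atiyah---Hirzebruch argument; it is a substantive theorem about the $p$-exponent of the cokernel of the stable Hurewicz map for the sphere spectrum. Without citing or reproving such a result, your proof of the asymptotic statement does not close, while the divisibility of $k_{\mathcal X}(n)$ by $\left[\frac{n-1}{2}\right]!$ stands.
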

\begin{proof}
Denote by $k_{e}(n)$ the minimal positive integer such that every integral $n$-dimensional homology class can be realized by a continuous image of a stably parallelizable manifold after multiplication by~$k_{e}(n)$. Since any manifold from $\mathcal{X}$ can be endowed with stably complex structure, then the number $k_{\mathcal{X}}(n)$ is divisible by $k_{\U}(n)$. Thus Theorem~\ref{main theorem beginning introduction} implies that $k_{\mathcal{X}}(n)$ is divisible by $\left[ \frac{n-1}{2}\right]!$. Since any stably parallelizable manifold lies in $\mathcal{X}$, then the number $k_{\mathcal{X}}(n)$ divides the number $k_{e}(n)$. By Theorem $1.3$ of~\cite{Akhill Mathew}, for every prime $p$ and every dimension  $n$ the following inequality holds
$$\nu_p\bigl(k_{e}(n)\bigr)\le \left[ \frac{n+1}{2(p-1)}\right]+3.$$
 Thus we obtain the inequality $\nu_p\bigl(k_{\mathcal{X}}(n)\bigr)\le \left[ \frac{n+1}{2(p-1)}\right]+3$. It is readily verified that 
\[
\left[ \frac{n+1}{2(p-1)}\right]+3 - \sum_{i=1}^{\infty} \left[\frac{n-1}{2p^i}\right] \le \left[\frac{n-3}{2(p-1)}\right] + 5 - \sum_{i=1}^{\infty} \left[\frac{n-1}{2p^i}\right] < \log_p\left(\frac{n-1}{2}\right) +6.
\]
Thus the ratio \eqref{9999} tends to $1$ as $n$ tends to infinity. 
\end{proof}

A second result of the paper is an explicit computation of $k_{\SO}(n)$ and $k_{\U}(n)$ for small dimensions.
\begin{Th}[Corollary \ref{low dim}]\label{small n intro}
	If $n<12$, then
	$$k_{\U}(n)=\left[\frac{n-1}{2}\right]\hskip-1mm \mathord{\vcenter{\hbox{\LARGE\textup !}}} .$$
	If $n<24$, then the number $k_{\SO}(n)$ is equal to the maximal odd divisor of $\left[\frac{n-1}{2}\right]!$, that is
	$$k_{\SO}(n)=\prod_{p\ \text{prime},\ p>2} p^{ \sum_{i=1}^{\infty} \left[\frac{n-1}{2p^i}\right]}.$$
\end{Th}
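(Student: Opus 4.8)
The plan is to prove the two asserted equalities as a pair of divisibilities, treating the lower divisibility as essentially automatic and concentrating the real work on a prime-by-prime upper bound over the finite range of dimensions. First I would record the elementary identity $\nu_p\bigl(\left[\tfrac{n-1}{2}\right]!\bigr)=\sum_{i\ge1}\bigl[[(n-1)/2]/p^i\bigr]=\sum_{i\ge1}\bigl[(n-1)/(2p^i)\bigr]$, which follows from Legendre's formula together with $\bigl[[x]/m\bigr]=[x/m]$. This shows that the lower bound in Theorem~\ref{Main theorem intro} is exactly $\nu_p\bigl(\left[\tfrac{n-1}{2}\right]!\bigr)$, so that $\left[\tfrac{n-1}{2}\right]!$ divides $k_{\U}(n)$ for \emph{every} $n$. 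Since $k_{\SO}(n)$ is the maximal odd divisor of $k_{\U}(n)$, we have $\nu_p(k_{\SO}(n))=\nu_p(k_{\U}(n))$ for every odd $p$, so the odd part of $\left[\tfrac{n-1}{2}\right]!$ divides $k_{\SO}(n)$ as well, and it suffices to prove the reverse divisibilities at the complex level: $\nu_p(k_{\U}(n))\le\sum_{i\ge1}[(n-1)/(2p^i)]$ for every prime $p$ when $n<12$, and for every odd prime $p$ when $n<24$.

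I would then split the pairs $(n,p)$ in range into the easy ones, where the crude upper bound $\left[\tfrac{n-3}{2(p-1)}\right]$ of Theorem~\ref{Main theorem intro} already coincides with $\sum_{i\ge1}[(n-1)/(2p^i)]$, and the remaining ones. For the easy pairs the desired inequality is immediate from the stated theorem. A short comparison of the two expressions shows that in the ranges $n<12$ and $n<24$ only finitely many pairs are \emph{not} easy — for instance $p=2,\ n\in\{7,8,11\}$, or $p=3,\ n\in\{11,23\}$, or $p=5,\ n\in\{19,20\}$ — these being exactly the dimensions in which the counting bound allows an a priori extra factor of $p$ beyond the factorial.

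For those remaining pairs the counting bound is too weak, and here I would appeal to the finer information underlying the proof of Theorem~\ref{upper boundry for realization} rather than to its stated form. The obstructions to realizing a class in $H_n(X;\Z)$ are the differentials hitting the bottom row of the Atiyah--Hirzebruch spectral sequence for $\MU_*(X)$, equivalently the higher cohomology operations assembled from the Milnor primitives $Q_i$. In each of the finitely many gap dimensions one tracks the differentials that can be nonzero in degree $n$ and checks directly that their combined order contributes no more than $\sum_{i\ge1}[(n-1)/(2p^i)]$ powers of $p$ to $k_{\U}(n)$ — that is, that the higher differentials either vanish or are already absorbed after multiplication by $\left[\tfrac{n-1}{2}\right]!$. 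As the range is finite this is a finite, dimension-by-dimension verification.

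The main obstacle is precisely this last step: ruling out the potential extra $p$-torsion in the gap cases. This is also what pins down the thresholds $12$ and $24$: they are the first dimensions in which a higher differential (for $p=2$, and for the smallest relevant odd prime, respectively) genuinely raises the required multiplier above $\left[\tfrac{n-1}{2}\right]!$, so that equality with the factorial can no longer hold. I expect the verification below the thresholds to rest on the explicit low-dimensional structure of the universal obstruction groups produced in the proof of the lower bound (Theorem~\ref{win}), which simultaneously exhibits classes attaining $\left[\tfrac{n-1}{2}\right]!$ and certifies that there is no room for additional $p$-torsion in the stated ranges.
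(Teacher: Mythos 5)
Your reduction is sound as far as it goes: Legendre's formula identifies the lower bound of Theorem~\ref{Main theorem intro} with $\nu_p\bigl(\left[\frac{n-1}{2}\right]!\bigr)$, the Brown--Peterson comparison handles $k_{\SO}(n)$, and for the pairs $(n,p)$ where the crude upper bound $\left[\frac{n-3}{2(p-1)}\right]$ already equals $\sum_{i\ge 1}\left[\frac{n-1}{2p^i}\right]$ nothing more is needed. This matches the skeleton of the paper's argument, which proves the sharper statement (Theorem~\ref{exact boundary for small dimensinal classes}) that $\nu_p\bigl(k_{\U}(n)\bigr)=\sum_{i\ge1}\left[\frac{n-1}{2p^i}\right]$ whenever $n<2p^2+2p$; the thresholds $12$ and $24$ are exactly $2p^2+2p$ for $p=2$ and $p=3$.

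However, the entire content of the theorem lies in the gap cases (which are $p=2$, $n\in\{7,8,11\}$; $p=3$, $n\in\{11,12,23\}$; $p=5$, $n\in\{19,20\}$), and there your proposal has a genuine hole. Saying that one ``tracks the differentials and checks directly'' is not a proof: the relevant differentials are higher-order, multivalued operations, and no procedure for evaluating them is given. The paper needs three concrete ingredients here, none of which appear in your sketch: (i)~Lemma~\ref{differentials for cobordisms}, which shows that $p^{r-1}x$ survives to $E^{2r(p-1)+1}_{n,0}$ and that its differential is given by \emph{primary} integral Steenrod operations $\theta_{r,i}$ of degree $2r(p-1)+1$ --- proved on the universal example $K(\Z_{(p)})$, where every positive-degree integral cohomology class has order $p$; (ii)~Lemma~\ref{tech for small n}, the actual vanishing statement: writing $\theta=\tilde{\beta}\theta_{p}$, dualizing against a finite-order class via Lemma~\ref{7895} and Thom's identity $\langle \theta_{p}(x),y\rangle=\langle x,\chi(\theta_{p})(y)\rangle$, computing the antipode explicitly (e.g.\ $\chi(\theta_2)=\varepsilon\Sq^4+\varepsilon'\Sq^3\Sq^1$ for $p=2$, $r=2$), and killing each term by excess bounds, by $\Sq^1y=0$ for integral $y$, and by $\langle x,y^2\rangle=0$ for torsion $y$; (iii)~Lemma~\ref{t6}, the vanishing of the edge differential $d^{n-1}_{n,0}$, needed when the next differential after the one treated in (ii) lands precisely on the $1$-line. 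Worse, your proposed substitute for this work --- that the lower-bound construction of Theorem~\ref{win} ``certifies that there is no room for additional $p$-torsion'' --- cannot work even in principle: the spaces $R_p(q,s)$ are particular examples witnessing non-realizability, and no example can bound $k_{\U}(n)$ from above, since an upper bound is a statement about all spaces and must come from a universal computation (which is exactly what (i)--(iii) supply). Your closing claim about the thresholds is also unjustified: the paper does not show that a differential genuinely becomes nonzero at $n=12$ or $n=24$ --- indeed it leaves open whether $k_{\SO}(24)$ equals $155925$ or $3\cdot 155925$; the thresholds mark where the vanishing argument stops working, not where a counterexample is known to start.
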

The values of $k_{\SO}(n)$ for $n<24$ are given in the following table. 
\[
\begin{tabular}{|c|c|c|c|c|c|c|c|c|c|c|c|c|}
\hline 
$n$ & 1 & 2 & 3 & 4 & 5 & 6 & 7 & 8 & 9 & 10 & 11 & 12  \\ 
\hline 
$k_{\SO}(n)$ & 1 & 1 &1 &1 &1 &1 &3 &3  &3 &3 &15 &15  \\
\hline
\hline
$n$ & 13 & 14 & 15 & 16 & 17 & 18 & 19 & 20 & 21 & 22 & 23 & 24 \\ 
\hline 
$k_{\SO}(n)$ & 45 & 45 &315 &315 &315 &315 &2835 &2835  &14175 &14175 &155925 & ?\\
\hline
\end{tabular}
\]
If $n=24$, then the number $k_{\SO}(24)$ is equal to either $155925$ or $3\cdot155925$. 

Most of the work is devoted to proving the lower bound for the number $\nu_p\bigl(k_{\U}(n)\bigr)$ from Theorem~\ref{Main theorem intro}. The proof consists in an explicit construction of a homology class which can be realized only after multiplication by the desired number. The underlying space of the homology class looks as follows. 
\begin{itemize}
	\item Denote by $\bu(2q,s)$ the classifying space $\BU$ of the infinite unitary group with killed all the homotopy groups $\pi_n$ with $n<2q-1$ or $n>2q+2s-1$. 
	 Denote by $\bu(2q-1,s)$ the loop space $\Omega \bu(2q,s)$. Denote by $\bu_{(p)}(q,s)$ the space $\bu(q,s)$ localized at prime $p$.
	\item In section $4$ we construct a map
	   \[
	   \pi \colon \bu_{(p)}(q,s)\to \prod_{i=0}^{s-1} K(\Q, q+2i)
	   \]
	   such that $\pi$  induces an isomorphism of rational homology groups. Denote by $R_p(q,s)$ the homotopy fibre of $\pi$. 
\end{itemize}
Then the lower bound in Theorem~\ref{Main theorem intro} is attained at certain homology class of the space 
  $$R_p\Biggr(n-2\Biggr(\sum_{i=1}^{\infty} \left[\frac{n-1}{2p^i}\right]\Biggl)(p-1), \  \Biggl(\sum_{i=1}^{\infty} \left[\frac{n-1}{2p^i}\right]\Biggr)(p-1)\Biggl).$$

 The present paper is organized as follows. In subsection $2.1$, for every prime $p$ we construct obstructions~$\Phi_r$ to the realization and prove their main properties. These obstructions are nothing else but differentials in the Atiyah---Hirzebruch spectral sequence (AHSS) for complex $\K$-theory localized at prime~$p$. Nevertheless, for our purposes it is more convenient to consider $\Phi_r$ as integral higher cohomological operations given by Postnikov invariants of the spectrum $\bu_{(p)}$. Here $\bu_{(p)}$ is a spectrum that represents the connected complex $\K$-theory localized at prime~$p$.  Considering integral obstructions rather than $\bmod \ p$ obstructions is crucial for the present paper. Since the homology functor is not representable, it is quite difficult to study the action of higher cohomological operations on homology. Hence we use the Kronecker pairing between homology and cohomology in order to reduce questions on homology to certain questions on cohomology. Following this strategy, in subsection $2.1$ we introduce higher operations~$\widetilde{\Phi}_r$ such that $\beta\widetilde{\Phi}_r=\Phi_r$, where $\beta$ is the Bockstein homomorphism for the short exact sequence 
\[
0 \to \Z_{(p)} \to \Q \to \Q/\Z_{(p)} \to 0
\]
of coefficient groups. In subsection $2.2$ we prove the duality theorem (Theorem~\ref{duality}) which states, roughly speaking, that if $\Phi_r$ is defined on classes $x\in H^n(X;\Z_{(p)})$ and $y\in H_{n+2r(p-1)}(X;\Z_{(p)})$, then we have the equality $\bigl\langle y, \widetilde{\Phi}_r(x)\bigr\rangle  =-\bigl\langle \widetilde{\Phi}_r(y), x \bigr\rangle$ of elements of $\Q/\Z_{(p)}$. Finally, in subsection $2.3$ we prove the following theorem, which plays a key role in the proof of the lower bound in Theorem~\ref{Main theorem intro}. 
\begin{Th}[Theorem~\ref{Lem for example}]\label{importan for example}
Let $X$ be a CW complex satisfying the following conditions. 
\begin{enumerate}
\item The rational reduced homology groups $\widetilde{H}_*(X;\Q)$ vanish. 
\item There exists a class $x\in H^{n}\bigl(X;\Z_{(p)}\bigr)$ such that the operation $\Phi_r$ is defined on $x$ and $\tilde{\beta}\chi(P^r)(x)\ne 0$, where $P^r$ is the Steenrod cyclic reduced power, $\chi$ is the antipode of the Steenrod algebra~$\mathcal{A}_{p}$, and $\tilde{\beta}\colon H^*(X;\Z/p\Z)\to H^{*+1}(X;\Z_{(p)})$ is the Bockstein homomorphism.
\end{enumerate}
Then for any homology class $a\in H_{n+2r(p-1)}(X;\Z)$ satisfying $\langle a,\chi(P^r)(x)\rangle  \ne0$ and for any integer $m$ coprime with $p$, the homology class $mp^{r-1}a$ cannot be realized by a continuous image of a stably complex manifold. Moreover, such a class $a$ always exists.
\end{Th}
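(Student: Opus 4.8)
The plan is to produce, for the class $mp^{r-1}a$, a nonzero differential in the Atiyah--Hirzebruch spectral sequence of $\bu_{(p)}$, which obstructs its realizability. I first record the standard reduction: the complex orientation $\MU\to\bu$ is compatible with the edge homomorphisms to ordinary homology, so the image of $\MU_*(X)\to H_*(X)$ is contained in the image of the edge map $\bu_{(p)*}(X)\to H_*(X;\Z_{(p)})$. Hence every class realizable by a continuous image of a stably complex manifold is a permanent cycle of the localized spectral sequence, and it suffices to exhibit a single index $s$ with $\Phi_s(mp^{r-1}a)\neq0$. I will take $s=r$.

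Condition (1) gives $\widetilde H^*(X;\Q)=0$, so all the rationally valued lower Postnikov invariants vanish on every class; this is what guarantees that $\Phi_r$ and $\widetilde\Phi_r$ are defined on the homology class $a$ as well as on $x$, with indeterminacy that does not affect the pairing with $x$. The duality theorem (Theorem~\ref{duality}) then applies and yields
$$
\langle\widetilde\Phi_r(a),x\rangle=-\langle a,\widetilde\Phi_r(x)\rangle\in\Q/\Z_{(p)}.
$$
Since $\widetilde\Phi_r$ is additive (modulo the indeterminacy controlled by (1)), the corresponding pairing for $mp^{r-1}a$ is $mp^{r-1}$ times this element.

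The heart of the argument is the evaluation of the right-hand side, where I invoke the structural relation between the higher operation and the Steenrod power established in subsection~$2.1$: the leading $p$-adic term of $\widetilde\Phi_r$ is $\chi(P^r)$ divided by exactly $p^{r}$, which at the level of the pairing reads
$$
p^{r-1}\langle a,\widetilde\Phi_r(x)\rangle=\langle a,\chi(P^r)(x)\rangle,
$$
the right-hand side taken in $\Z/p\Z\cong\tfrac1p\Z_{(p)}/\Z_{(p)}\subset\Q/\Z_{(p)}$. By hypothesis this is a nonzero element of order $p$, so $\langle a,\widetilde\Phi_r(x)\rangle$ has order exactly $p^{r}$ and
$$
\langle\widetilde\Phi_r(mp^{r-1}a),x\rangle=-mp^{r-1}\langle a,\widetilde\Phi_r(x)\rangle=-m\langle a,\chi(P^r)(x)\rangle\neq0,
$$
because $\gcd(m,p)=1$. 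In particular $\widetilde\Phi_r(mp^{r-1}a)\neq0$. Applying the homology Bockstein $\beta$ and using once more that $\widetilde H_*(X;\Q)=0$ — which makes $\beta$ injective on reduced homology, as no nonzero class in $H_*(X;\Q/\Z_{(p)})$ lifts to $H_*(X;\Q)=0$ — I obtain $\Phi_r(mp^{r-1}a)=\beta\widetilde\Phi_r(mp^{r-1}a)\neq0$, so $mp^{r-1}a$ is not a permanent cycle and hence not realizable. The main obstacle is precisely this bookkeeping of the power of $p$: one must show that the leading term of $\widetilde\Phi_r$ is $\chi(P^r)$ with denominator exactly $p^{r}$ (neither $p^{r-1}$ nor $p^{r+1}$), since this is what fixes the order of the obstruction and thereby the exponent $r-1$ appearing in the statement.

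Finally, the existence of $a$ follows from hypothesis (2) by the universal coefficient theorem. Set $k=n+2r(p-1)$ and write $\alpha=\chi(P^r)(x)\in H^{k}(X;\Z/p\Z)$. If the induced functional $\bar\alpha\in\Hom\bigl(H_k(X;\Z),\Z/p\Z\bigr)$, $c\mapsto\langle c,\alpha\rangle$, were zero, then $\alpha$ would lie in the $\Ext$-summand of $H^k(X;\Z/p\Z)$, hence be the reduction of an integral class, which would force $\tilde\beta\alpha=0$. As $\tilde\beta\chi(P^r)(x)\neq0$ by assumption, we get $\bar\alpha\neq0$, so there exists $a\in H_k(X;\Z)$ with $\langle a,\chi(P^r)(x)\rangle\neq0$, completing the proof.
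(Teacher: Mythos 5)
Your high-level skeleton (realizable classes are annihilated by the operations $\Phi_s$; exhibit a nonzero value of $\widetilde{\Phi}_r$ on $mp^{r-1}a$ by pairing against $x$; get existence of $a$ from the universal coefficient theorem) agrees with the paper, and your existence argument for $a$ is essentially the paper's. But there are two genuine gaps. First, every piece of machinery you invoke --- Theorem~\ref{duality}, Lemma~\ref{properties of Phi_r on Q-free cohomology}, Lemma~\ref{hiegher operations and realiazation} --- is proved only for \emph{finite} spectra, because it rests on Spanier---Whitehead duality; the theorem's $X$ is an arbitrary CW complex, and in the paper's application (Section 4, the spaces $R_p(q,s)$) it is genuinely infinite. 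A hypothetical realization $f\colon M\to X$ of $mp^{r-1}a$ factors through a finite subcomplex $X_1$, but $X_1$ need not satisfy condition (1), so rational vanishing is lost exactly when you pass to the finite setting where duality is available. The paper's proof spends most of its length repairing this: using $\pi_*(X)\otimes\Q=0$ (rational Hurewicz) it attaches finitely many cells to $X_1$ along spheroids that are null-homotopic in $X$, producing a finite complex $Y$ with $H_{\le N}(Y;\Q)=0$ for $N>n+2r(p-1)$, carrying images $y$ of $x$ and $b$ of $a$ with $p^{r-1}b$ still realizable. Your proposal omits this reduction entirely. Relatedly, your claim that condition (1) makes $\Phi_r$ defined on $a$ is unjustified and false in general: definedness of $\Phi_r$ on $a$ means the lower (torsion-valued) operations act trivially on $a$, which rational vanishing does not give. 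The paper never needs this; it only needs $\Phi_r$ defined on $p^{r-1}b$, which is automatic from statement 3 of Corollary~\ref{definition of Phi_r}.

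Second, your central ``structural relation'' $p^{r-1}\langle a,\widetilde{\Phi}_r(x)\rangle=\langle a,\chi(P^r)(x)\rangle$ is not what subsection 2.1 establishes, and it is incorrect as stated. What is proved there (Corollary~\ref{definition of Phi_r}, from Buchstaber's theorem) is the coset membership $\varepsilon_r\tilde{\beta}P^r(x)\in\Phi_r(p^{r-1}x)$ --- with $P^r$, not $\chi(P^r)$, with a unit $\varepsilon_r$, and only modulo indeterminacy. The antipode in the theorem's hypothesis arises from Thom's transposition formula $\langle P^r(b),y\rangle=\langle b,\chi(P^r)(y)\rangle$ when the Steenrod operation is applied on the \emph{homology} side. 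That is why the paper computes $\langle\widetilde{\Phi}_r(p^{r-1}b),y\rangle$ directly, taking the representative $\varepsilon_r\rho_*P^r(b)\in\widetilde{\Phi}_r(p^{r-1}b)$ given by the homology version of Buchstaber's relation, and uses only statement 1a of Theorem~\ref{duality} (to make the pairing single-valued); the anti-symmetry statement 2 is never used. Your route --- anti-symmetry first, then the cohomology-side relation --- would produce $\langle a,P^r(x)\rangle$ rather than $\langle a,\chi(P^r)(x)\rangle$, so it does not connect to the hypothesis; and the indeterminacy bookkeeping does not close up either: writing $p^{r-1}w=\varepsilon_r\rho_*P^r(x)+v$ with $w\in\widetilde{\Phi}_r(x)$ and $v\in\Indet^{n}(\widetilde{\Phi}_r)$, the fact $\langle mp^{r-1}a,\Indet^{n}(\widetilde{\Phi}_r)\rangle=0$ kills $mp^{r-1}\langle a,v\rangle$ but not the term $m\langle a,v\rangle$ that actually appears. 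Both defects vanish if one runs the computation on homology classes, which is precisely the paper's argument.
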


In section $3$ we study the action of the operations $\chi(P^r)$ on cohomology classes. In section $4$, using results of section $3$ and Theorem~\ref{importan for example}, we construct homology classes on which the lower bound in Theorem~\ref{Main theorem intro} is achieved. 

In section $5$ we prove the upper bound in Theorem~\ref{Main theorem intro} and Theorem~\ref{small n intro} by using the Atiyah---Hirzebruch spectral sequence for complex bordism. 
 
 I am grateful to my advisor Alexander Gaifullin for setting the problem and for valuable discussions. 
 
 \section{Partial operations \texorpdfstring{$\Phi_r$}{Pr} and \texorpdfstring{$\widetilde{\Phi}_r$}{Pr}}
 In subsections $2.1$ and $2.2$ we work in the stable homotopy category. All spectra are assumed to be CW spectra. Let $p$ be a fixed prime number. Denote by $\Z_{(p)}$ the ring of integers localized at the prime ideal $(p)$. 
 \subsection{Construction of operations \texorpdfstring{$\Phi_r$}{Pr} and \texorpdfstring{$\widetilde{\Phi}_r$}{Pr}}
 
 Let us fix some notation.
 \begin{itemize}
	\item Denote by $\bu_{(p)}$ a spectrum which represents the connective complex $\K$-theory localized at $p$. Denote by $u\in H^0\bigl(\bu_{(p)};\Z_{(p)}\bigr)=\Z_{(p)}$ the standard generator.
	\item Let $\bu_{(p)}(s)$ be the $(2s-2)^{\text{th}}$ stage of the Postnikov tower of $\bu_{(p)}$. Then 
	\[\pi_i\bigl(\bu_{(p)}(s)\bigr)=\begin{cases}
	\Z_{(p)},  & i=2a, \  0\le a<s; \\
	0, & \text{otherwise}.
	\end{cases}
	\]
	\item Denote by $u_{s} \in H^{0}\bigl(\bu_{(p)}(s);\Z_{(p)}\bigr)=\Z_{(p)}$ a generator that goes to $u$ under the canonical map $$\bu_{(p)}\to \bu_{(p)}(s).$$ 
	\item Denote by $k^{2s+1}\in H^{2s+1}\bigl(\bu_{(p)}(s);\Z_{(p)}\bigr)$ the Postnikov invariants of $\bu_{(p)}$. 
	\item Consider the following commutative diagram of coefficient groups. 
	\[
	\xymatrix{ 
	0 \ar[r]  & \Z_{(p)} \ar[r]^-{\times p} \ar[d]^{\id} & \Z_{(p)} \ar[r]^-{\bmod p} \ar[d]^-{1 \mapsto 1/p} & \Z/p\Z \ar[r]  \ar[d]_{\rho}^-{1 \mapsto 1/p} & 0 
	\\
	0 \ar[r] & \Z_{(p)} \ar[r]_{1\mapsto 1} & \Q \ar[r]_-{\bmod \Z_{(p)} } & \Q/\Z_{(p)} \ar[r] & 0
	}
	\]
	Note that both rows are short exact sequences. Denote by $\tilde{\beta}$ the Bockstein homomorphism for the first row, and by $\beta$ for the second one. Denote by $\rho_*$ the map on (co)homology groups induced by the homomorphism $\rho \colon \Z/p\Z \to \mathbb{Q}/\Z_{(p)}, \ \rho(1)=1/p$ of the coefficient groups. Thus we have the following commutative diagram for cohomology groups
	\begin{gather}
	\begin{aligned}
	\xymatrix{
	H^*(X;\Z_{(p)}) \ar[r] \ar[d] &  H^*(X;\Z/p\Z) \ar[r]^-{\tilde{\beta}} \ar[d]^-{\rho_*} &  H^{*+1}(X;\Z_{(p)}) \ar[d]^-{\id}  \\
	H^*(X;\Q) \ar[r] &  H^*(X;\Q/\Z_{(p)}) \ar[r]^-{\beta}  &  H^{*+1}(X;\Z_{(p)}) 
	}
	\end{aligned}
	\label{comm diagram for coeff}
	\end{gather}
        and a similar one for homology groups.  In particular, $\tilde{\beta}=\beta\rho_*$.
	\item Let us choose and fix throughout the paper a class $\widetilde{k}^{2s+1}\in H^{2s}\bigl(\bu_{(p)}(s);\mathbb{Q}/\Z_{(p)}\bigr)$ such that $\beta\widetilde{k}^{2s+1}=k^{2s+1}.$ Such а class exists since the Postnikov invariants of a spectrum have finite orders. 
\end{itemize}
 
For a finite spectrum $X$ let us denote by $D(X)$ the Spanier---Whitehead dual of $X$. The following properties of the duality are well known.
\begin{itemize}
	\item For any (co)homology theory $h$ there is a natural isomorphism 
	\[
	D\colon h_n\bigl(X\bigr) \to h^{-n}\bigl(D(X)\bigr).
	\]
	\item There is a map $\eta\colon \mathbb{S} \to D(X)\wedge X$ such that for any classes $x\in H_n\bigl(X; G \bigr)$ and $y\in H^n\bigl(X; H \bigr)$ we have that
	\[
	\langle x,y\rangle  =\eta^*\bigl(D(x)\times y\bigr)\in H^0\bigl(\mathbb{S}; G \otimes_{\Z} H \bigr)=G \otimes_{\Z} H ,\]
	where $\mathbb{S}$ denotes the sphere spectrum. 
\end{itemize}
 The cohomology classes $k^{2s+1}$ and $\widetilde{k}^{2s+1}$ induce higher cohomological operations in the following way. 
 
 \begin{definition}\label{def. def of integer higher operations} 
	Let $X$ be a spectrum. Define the action of the classes $k^{2s+1}$ and  $\widetilde{k}^{2s+1}$ on a cohomology class  $x\in H^n\bigl(X;\Z_{(p)}\bigr)$ by the following formulas: 
	\[
	k^{2s+1}(x)=\{f^*\bigl(k^{2s+1}\times \sigma^n \bigr)\}\subset H^{n+2s+1}\bigl(X;\Z_{(p)}\bigr),
	\]
	\[
	\widetilde{k}^{2s+1}(x)=\{f^*\bigl(\widetilde{k}^{2s+1}\times \sigma^n \bigr)\}\subset H^{n+2s}\bigl(X;\mathbb{Q}/\Z_{(p)}\bigr),
	\]
	where $\sigma\in H^1(\mathbb{S}^1;\Z_{(p)})$ is the generator and $f\colon X \to \bu_{(p)}(s)\wedge \mathbb{S}^n$ runs over all maps such that 
	$$f^*\bigl(u_s\times \sigma^n \bigr)=x.$$
	
	Since the stable homotopy category is additive, then the set $k^{2s+1}(x)$ is either empty or a coset of~$k^{2s+1}(0)$, where $0$ is considered as an element of $H^n\bigl(X;\Z_{(p)}\bigr)$. We denote the subgroup $k^{2s+1}(0)$ by $\Indet_X^n(k^{2s+1})$ and call it the \textit{indeterminacy} of the operation $k^{2s+1}$. Since the underlying space $X$ will always be clear from the context we will omit the corresponding index. By analogy, the set $\widetilde{k}^{2s+1}(x)$ is either empty or a coset of $\widetilde{k}^{2s+1}(0)=\Indet^n\bigl(\widetilde{k}^{2s+1}\bigr)$. 
	
	 Define the action of the classes $k^{2s+1}$ and  $\widetilde{k}^{2s+1}$ on a homology class $x\in H_n\bigl(X;\Z_{(p)}\bigr)$ by the following formulas: 
	\[
	k^{2s+1}(x)=\{(\id\wedge k^{2s+1})\circ f \}\subset \pi_{n}\left( X\wedge\Sigma^{2s+1} K(\Z_{(p)})\right)\cong H_{n-(2s+1)}\bigl(X;\Z_{(p)}\bigr),
	\]
	\[
	\widetilde{k}^{2s+1}(x)=\{(\id\wedge \widetilde{k}^{2s+1})\circ f \}\subset \pi_{n}\left( X\wedge\Sigma^{2s} K(\mathbb{Q}/\Z_{(p)})\right)\cong H_{n-2s}\bigl(X;\mathbb{Q}/\Z_{(p)}\bigr),
	\]
	where $f\colon \mathbb{S}^n \to X \wedge \bu_{(p)}(s)$ runs over all maps such that the diagram
	\[
	\xymatrix{
		& X\wedge K(\Z_{(p)})  \\
		\mathbb{S}^n \ar[ur]^-{x} \ar[r]^-{f} & X \wedge \bu_{(p)}(s) \ar[u]_{\id \wedge u_s} 
	}
	\]
	is commutative and 
	\[
	\id\wedge k^{2s+1} \colon X \wedge \bu_{(p)}(s) \to   X\wedge\Sigma^{2s+1} K\bigl(\Z_{(p)}\bigr);
	\]
	\[
	\id\wedge \widetilde{k}^{2s+1} \colon X \wedge \bu_{(p)}(s) \to  X\wedge\Sigma^{2s} K\bigl(\mathbb{Q}/\Z_{(p)}\bigr).
	\]

	By analogy, $k^{2s+1}(x)$ is either empty or a coset of $k^{2s+1}(0)$, and  $\widetilde{k}^{2s+1}(x)$ is either empty or a coset of $\widetilde{k}^{2s+1}(0)$, where $0\in H_n\bigl(X;\Z_{(p)}\bigr)$. Denote the subgroup $k^{2s+1}(0)$ by $\Indet_n(k^{2s+1})$, and the subgroup~$\widetilde{k}^{2s+1}(0)$ by $\Indet_n(\widetilde{k}^{2s+1})$.
	
	If $X$ is a CW complex, then we define the action of $k^{2s+1}$ and $\widetilde{k}^{2s+1}$ on its (co)homology classes in the standard way by using the suspension spectrum of~$X$.   
	\end{definition}
 
Let $\Phi$ be one of the operations $k^{2s+1}$ or $\widetilde{k}^{2s+1}$. We say that $\Phi$ is \textit{defined} on a class $x$ if the set $\Phi(x)$ is not empty.  We say that $\Phi$ \textit{acts trivially} on a class $x$  if $\Phi$ is defined on $x$ and $0 \in \Phi(x)$. The following properties of $\Phi$ follow directly from the definition.
 \begin{description}
	\item[Stability] for any $\Z_{(p)}$-(co)homology class $x$ 
	$$\Phi(\Sigma x)=(-1)^{\deg\Phi}\Sigma \Phi(x).$$
	\item[Functoriality] if $x\in H_*\bigl(X;\Z_{(p)}\bigr)$, $y\in H^*\bigl(Y;\Z_{(p)}\bigr)$ and $f\colon X\to Y$, then
	$$\Phi(f^*y)\supset f^*\Phi(y),$$
	$$\Phi(f_*x)\supset f_*\Phi(x).$$
\end{description}
 
\begin{Rem}\label{bokstein and tilda for action of k-invariants}
The following properties of $k^{2s+1}$ and $\widetilde{k}^{2s+1}$ follow directly from the definition.
\begin{enumerate}
	\item The domains of $k^{2s+1}$ and $\widetilde{k}^{2s+1}$ coincide. 
	\item For any $\Z_{(p)}$-(co)homology class $x$ there is equality of sets $k^{2s+1}(x)=\beta\widetilde{k}^{2s+1}(x)$.
	\item For any dimension $n$ there are equalities of subgroups $\Indet^n(k^{2s+1})=\beta\Indet^n(\widetilde{k}^{2s+1})$ and $\Indet_n(k^{2s+1})=\beta\Indet_n(\widetilde{k}^{2s+1})$.
\end{enumerate}
\end{Rem}
 
In what follows, we are only interested in the case of finite spectra. However, all the following assertions can easily be generalized to the case of spectra which are bounded below and have finitely generated homotopy groups. 

Let $X$ be a finite spectrum. Then we can give an alternative definition of an action of $k^{2s+1}$ and~$\widetilde{k}^{2s+1}$ on homology classes of $X$ using the Spanier---Whitehead duality and the constructed action on cohomology. Namely, for a homology class $x\in H_n\bigl(X;\Z_{(p)}\bigr)$ we can define:
\[
	k^{2s+1}(x)=
	D^{-1}\Bigl(k^{2s+1}\bigl(D(x)\bigr)\Bigr)\subset H_{n-2s-1}\bigl(X;\Z_{(p)}\bigr),
	\]
	\[
	\widetilde{k}^{2s+1}(x)=
	D^{-1}\Bigl(\widetilde{k}^{2s+1}\bigl(D(x)\bigr)\Bigr)\subset H_{n-2s}\bigl(X;\mathbb{Q}/\Z_{(p)}\bigr).
\]
It can easily be verified by standard methods that the two definitions coincide, that is they give the same subsets in homology groups of $X$.  Henceforth we will mainly use the latter definition.  

Since for finite spectra one has the Spanier---Whitehead duality, we will formulate and prove most of the statements below only for cohomology classes, keeping in mind that analogous statements hold for homology classes as well. The only important difference is that the action of a (higher) cohomological operation on a cohomology class increases its degree while on a homology class it decreases degree. 
 
 Denote by $\K^*$ the complex $\K$-theory. Let us remind that the AHSS for the cohomology theory $\K^*\otimes \Z_{(p)}$ of a finite spectrum $X$ looks as follows: 
 \[
 E_2^{s,t}=H^s\bigl(X; \K^t(\pt)\otimes \Z_{(p)}\bigr) \Longrightarrow \K^*(X) \otimes \Z_{(p)} 
 \]
with differentials $d_r^{s,t}\colon E_r^{s,t} \to E_r^{s+r,t-r+1}$. Since $\K^*(\pt)\otimes \Z_{(p)}=\Z_{(p)}[h,h^{-1}]$ with $\deg(h)=2$, then 
\[
E_2^{s,2t}=H^s(X;\Z_{(p)}), \ \  E_2^{s,2t+1}=0
\]
and all differentials $d_{2r}^{*,*}$ with even indices vanish. 

The operations $k^{2s+1}$  are directly related to the differentials of the AHSS for $\K^*\otimes \Z_{(p)}$. Namely, there is the following theorem.

\begin{Th}[C.\,R.\,F. Maunder, \cite{Maunder}]\label{Maunder} 
	The differential $d_{2s+1}$ of the AHSS for $\K^*\otimes \Z_{(p)}$ is induced by the operation $k^{2s+1}$. Namely, for a finite spectrum $X$ the following statements hold. 
	  \begin{enumerate}
		\item The differential $d_{2s+1}^{n,2t}$ is defined on a class $x\in E_2^{n,2t}=H^n\bigl(X;\Z_{(p)}\bigr)$ if and only if the operation $k^{2s+1}$ is defined on $x$.
		\item Let a class $\tilde{x}\in E_{2s+1}^{n,2t}$ be represented by a class $x\in E_2^{n,2t}=H^n\bigl(X;\Z_{(p)}\bigr)$. Then there is the following equality of subsets of $E_2^{n+2s+1,2t-2s}=H^{n+2s+1}(X;\Z_{(p)})$:
		\[
		d_{2s+1}^{n,2t}(\tilde{x})=k^{2s+1}(x),
		\]
		where $d_{2s+1}^{n,2t}(\tilde{x})$ is considered as a coset of the union of images of the previous differentials. 
	\end{enumerate}
\end{Th}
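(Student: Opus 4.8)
The plan is to build the Atiyah--Hirzebruch spectral sequence directly from the Postnikov tower of $\bu_{(p)}$ and to identify its differentials with the obstructions to lifting through that tower, which are by construction the operations $k^{2s+1}$ of Definition~\ref{def. def of integer higher operations}. First I would observe that it suffices to treat the connective theory $\bu_{(p)}^{*}$: the connective cover map $\bu_{(p)}\to\K\otimes\Z_{(p)}$ induces a morphism of spectral sequences which is an isomorphism of $E_2$-terms in every bidegree entering a fixed differential $d_{2s+1}^{n,2t}$, so the two sequences have the same differential there. The Postnikov tower of $\bu_{(p)}$ is the inverse tower
\[
\cdots\to\bu_{(p)}(s+1)\to\bu_{(p)}(s)\to\cdots\to\bu_{(p)}(1)=K(\Z_{(p)}),
\]
in which each stage sits in a principal fibre sequence
\[
\Sigma^{2s}K(\Z_{(p)})\to \bu_{(p)}(s+1)\to\bu_{(p)}(s)\xrightarrow{\,k^{2s+1}\,}\Sigma^{2s+1}K(\Z_{(p)}).
\]
Applying $[X,\Sigma^{*}-]$ to this tower yields an exact couple whose spectral sequence converges to $\bu_{(p)}^{*}(X)$ (the inverse limit is attained and $\lim^1$ vanishes because $X$ is finite); this is the AHSS, with $E_2^{s,2t}=H^s(X;\Z_{(p)})$, $E_2^{s,2t+1}=0$, and all even differentials zero since the odd homotopy of $\bu_{(p)}$ vanishes.

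Both parts are established by a simultaneous induction on $s$, driven by the dictionary between survival in the spectral sequence and liftability through the tower. A class $x\in H^n(X;\Z_{(p)})$ is exactly a map $X\to\Sigma^n\bu_{(p)}(1)$, so it automatically represents an element of $E_2^{n,2t}$. The base case $s=1$ is the classical identification of the first nonvanishing differential $d_3$ with the first (primary) $k$-invariant $k^3$. For the inductive step, suppose (1) and (2) hold below $s$. Then $x$ survives to $E_{2s-1}$ if and only if it lifts to a map $X\to\Sigma^n\bu_{(p)}(s-1)$, and by part~(2) at the previous index the obstruction to extending this lift over the next principal fibration---the composite with $k^{2s-1}$, i.e.\ $k^{2s-1}(x)$---is precisely $d_{2s-1}(\tilde x)$. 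Since the intervening even differential vanishes, $x$ survives to $E_{2s+1}$ if and only if $x$ admits a lift $f\colon X\to\Sigma^n\bu_{(p)}(s)$ with $f^*(u_s\times\sigma^n)=x$, which is exactly the condition that $k^{2s+1}$ be defined on $x$. This proves part~(1).

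To compute the differential, take any such lift $f$. The obstruction to lifting $f$ further to $\bu_{(p)}(s+1)$ is its composite with the $k$-invariant, $f^*(k^{2s+1}\times\sigma^n)\in H^{n+2s+1}(X;\Z_{(p)})$, which is one representative of $k^{2s+1}(x)$; the general theory of the spectral sequence of a tower identifies this obstruction with $d_{2s+1}(\tilde x)$. It remains to match ambiguities: as $f$ runs over all lifts of $x$, the value $f^*(k^{2s+1}\times\sigma^n)$ sweeps out a coset of $\Indet^{n}(k^{2s+1})=k^{2s+1}(0)$, whereas $d_{2s+1}(\tilde x)$, lifted from the subquotient $E_{2s+1}^{n+2s+1,2t-2s}$ back to $H^{n+2s+1}(X;\Z_{(p)})$, is a coset of the subgroup of boundaries, i.e.\ of $\sum_{3\le r\le 2s-1}\Im d_r$ in that bidegree. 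One verifies these subgroups coincide: a lift of the zero class factors through the fibre of $\bu_{(p)}(s)\to K(\Z_{(p)})$, and composing it with $k^{2s+1}$ realizes precisely the image of the lower differentials. Hence the two cosets agree and $d_{2s+1}^{n,2t}(\tilde x)=k^{2s+1}(x)$, closing the induction and proving part~(2).

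The step I expect to be the main obstacle is exactly this matching of indeterminacies, together with the verification that the spectral sequence assembled from the Postnikov tower of the target agrees, with the correct indexing and signs, with the AHSS as usually defined from the skeletal filtration of $X$. In particular one must confirm that the indeterminacy of the higher operation $k^{2s+1}$---which a priori collects contributions from \emph{every} lower Postnikov stage---is exactly the cumulative image $\sum_{3\le r\le 2s-1}\Im d_r$ in the relevant bidegree, and not something larger. Once these bookkeeping points are settled, parts~(1) and~(2) follow formally from the inductive scheme above, and the connective-versus-periodic comparison of the first paragraph is a routine remark.
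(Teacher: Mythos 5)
You should first be aware that the paper contains no proof of this statement: it is Maunder's theorem, imported wholesale from \cite{Maunder} and used as a black box to derive Corollaries \ref{1000010100} and \ref{definition of Phi_r}. So the only meaningful comparison is with Maunder's original argument --- and your strategy is in fact his: form the spectral sequence of the Postnikov tower of the representing spectrum, observe that its differentials are the stable operations determined by the $k$-invariants, and then identify that spectral sequence with the AHSS. Your reduction from $\K^*\otimes\Z_{(p)}$ to the connective theory is also essentially fine, modulo one imprecision: for $2t>0$ the connective $E_2$-term vanishes while the periodic one does not, so the comparison map is \emph{not} an isomorphism "in every bidegree entering a fixed differential"; one must first use Bott periodicity of the differentials to move the given bidegree into the connective range.

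The genuine gap is that the two points you postpone to your final paragraph are not bookkeeping; they are the whole theorem. Your induction is essentially tautological for the tower-filtration spectral sequence: there, "survival equals liftability" and "the differential is composition with the $k$-invariant" hold by construction of the exact couple. What the statement asserts, however, concerns the AHSS defined by the skeletal filtration of $X$, and nothing in your argument connects the two filtrations. Concretely, in your inductive step for part (2) you invoke "the general theory of the spectral sequence of a tower" to identify $f^*\bigl(k^{2s+1}\times\sigma^n\bigr)$ with $d_{2s+1}(\tilde x)$ --- but $d_{2s+1}$ there must be the skeletal differential, and this identification is precisely Maunder's comparison theorem, which you never prove; note also that your induction hypothesis (parts (1) and (2) below $s$ for the skeletal AHSS) cannot supply it, since it only controls differentials of lower index. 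The same applies to the equality $\Indet^n(k^{2s+1})=\sum_{r\le 2s}\Im d_r$: your sketch (a lift of zero factors through the fibre of $\bu_{(p)}(s)\to K(\Z_{(p)})$, and composition with $k^{2s+1}$ sweeps out the lower images) requires relating the restriction of the top $k$-invariant to that fibre with the lower $k$-invariants of the tower, which is again part of the substance of the cited result. As an outline of Maunder's proof your proposal is faithful; as a proof it assumes exactly what is to be shown.
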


 In \cite{Buch.1} and \cite{Buch.2} Buchstaber proved the following properties of the AHSS for  $\K^*\otimes \Z_{(p)}$. 
 \begin{Th}[{\cite[Lemma 1]{Buch.1}}]\label{Buch.1}
	In the AHSS for the theory $\K^*\otimes \Z_{(p)}$  the differential $d_{2s+1}$ vanishes, unless $s\equiv 0 \bmod (p-1).$
\end{Th}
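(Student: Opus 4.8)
The plan is to deduce the lemma from the naturality of the Adams operations, reducing it to an elementary eigenvalue computation. For every integer $k$ coprime to $p$ we have $1/k\in\Z_{(p)}$, so the stable Adams operation $\psi^k$ is available on the theory $\K^*\otimes\Z_{(p)}$; being a natural multiplicative operation, it induces an automorphism of the whole Atiyah--Hirzebruch spectral sequence commuting with every differential $d_r$. I intend to combine this with the fact that $\psi^k$ acts on the $K$-theory coefficients by powers of $k$, which pins down its eigenvalues on the spectral sequence and forces most of the differentials to vanish.

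First I would record the action of $\psi^k$ on the $E_2$-page. Since $\psi^k$ multiplies the Bott class (the generator of $\K^{-2}(\pt)$) by $k$, it acts on $\K^{2t}(\pt)\otimes\Z_{(p)}$ as multiplication by the scalar $k^{-t}$. Being a scalar on coefficients, it therefore acts on $E_2^{a,2t}=H^a\bigl(X;\K^{2t}(\pt)\otimes\Z_{(p)}\bigr)$, and hence on every subquotient $E_r^{a,2t}$, as multiplication by $k^{-t}$. (The precise sign of the exponent is irrelevant for what follows; what matters is that the eigenvalue depends only on the internal degree $t$ and is linear in $t$.) Note that the failure of $\psi^k$ to be a stable operation is exactly what produces this nontrivial eigenvalue.

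Now comes the eigenvalue argument. The differential $d_{2s+1}$ sends $E_{2s+1}^{a,2t}$ to $E_{2s+1}^{a+2s+1,\,2(t-s)}$, so its source carries the eigenvalue $k^{-t}$ while its target carries the eigenvalue $k^{-t+s}$. Since $\psi^k$ commutes with $d_{2s+1}$, applying both composites to a class in the source gives $k^{-t}d_{2s+1}=k^{-t+s}d_{2s+1}$, that is $(k^{s}-1)\,d_{2s+1}=0$ as a $\Z_{(p)}$-linear map. Suppose $s\not\equiv 0\pmod{p-1}$. Choosing $k$ to be a primitive root modulo $p$, we have $k^{s}\not\equiv 1\pmod p$, so $k^{s}-1$ is a unit in $\Z_{(p)}$; multiplying by its inverse yields $d_{2s+1}=0$, as required.

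The only points requiring care are the foundational ones: that for $k$ coprime to $p$ the operation $\psi^k$ genuinely acts on $\K^*\otimes\Z_{(p)}$ of a finite spectrum and induces a morphism of spectral sequences commuting with the differentials, and fixing the normalization of the eigenvalue $k^{-t}$ on the coefficients. I expect the main (though routine) obstacle to be the bookkeeping of conventions, so that the degree shift of $d_{2s+1}$ produces precisely the factor $k^{s}$; once this is in place the arithmetic step — that a primitive root modulo $p$ satisfies $k^{s}\equiv 1\pmod p$ exactly when $(p-1)\mid s$ — is immediate, and the conclusion is insensitive to the sign convention since $k^{-s}\equiv 1$ if and only if $k^{s}\equiv 1$.
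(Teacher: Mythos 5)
The paper does not actually prove this statement: it is imported as an external result from Buchstaber \cite{Buch.1}, so there is no internal argument to compare yours against. Your proof is correct and is the classical eigenvalue argument for this fact. The steps all check out: for $k$ prime to $p$ the stable Adams operation $\psi^k$ exists on $\K^*\otimes\Z_{(p)}$ because $k$ is invertible in $\Z_{(p)}$; being a self-map of the representing spectrum it induces an endomorphism of the AHSS; it acts on $E_r^{a,2t}$ by the scalar $k^{-t}$ (a scalar action on $E_2$ descends to every subquotient); the degree bookkeeping $d_{2s+1}\colon E_r^{a,2t}\to E_r^{a+2s+1,2t-2s}$ then gives $(k^{s}-1)\,d_{2s+1}=0$; and choosing $k$ to be a primitive root modulo $p$ makes $k^{s}-1$ a unit of $\Z_{(p)}$ whenever $s\not\equiv 0 \bmod (p-1)$, since $E_r^{a,b}$ is a $\Z_{(p)}$-module. (For $p=2$ the statement is vacuous, as your argument implicitly reflects.) An equivalent standard route, for comparison, is to invoke the Adams splitting of $p$-local $\K$-theory into suspensions $\Sigma^{2i}E(1)$, $0\le i\le p-2$, of the Adams summand, whose coefficients are concentrated in degrees divisible by $2(p-1)$; but that splitting is itself built from the operations $\psi^k$, so your proof is the more self-contained one.

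One remark should be corrected: your parenthetical claim that ``the failure of $\psi^k$ to be a stable operation is exactly what produces this nontrivial eigenvalue'' is at odds with your own first step. For $\psi^k$ to induce a morphism of the AHSS at all, it must \emph{be} a stable operation, i.e.\ a map of spectra representing $\K^*\otimes\Z_{(p)}$, and $p$-locally it is one precisely because $k$ has been inverted. What produces the eigenvalue $k^{-t}$ is not a failure of stability but the fact that this stable operation acts nontrivially on the coefficient ring --- equivalently, that the unstable operation on $\K^0$ does not commute with Bott periodicity, $\psi^k(\beta x)=k\beta\,\psi^k(x)$, which is also why the stable version requires $k$ to be invertible. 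Stability concerns compatibility with suspension in the variable $X$, and places no restriction on the action on $\K^*(\pt)$; conflating the two would make the first sentence of your argument self-contradictory, so the phrasing is worth fixing even though the mathematics is unaffected.
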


\begin{Th}[{\cite[Theorem 2.1]{Buch.2}}]\label{Buch.2}
        For any spectrum $X$ and any  $r$, $n$, and $m$, in the AHSS for the theory $\K^*\otimes \Z_{(p)}$ the following formula holds. 
	\[
	d_{2r(p-1)+1}^{n,2m}(p^{r-1}x)=\varepsilon_r\bigl(\tilde{\beta} P^r(x)\bigr)\alpha\ \ \ (\text{modulo images of the previous differentials}),
	\]
	where $x\in E_2^{n,2m}, \alpha \in \K^{2m-2r(p-1)}(\pt)\otimes\Z_{(p)}$ is the generator, $\varepsilon_r$ is a unit in $\Z_{(p)}$, $P^r$ is the Steenrod cyclic reduced power for $p>2$ and $P^r=Sq^{2r}$ for $p=2$.
\end{Th}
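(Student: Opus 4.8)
The plan is to reduce the formula, via Maunder's theorem, to a computation of a single Postnikov invariant of $\bu_{(p)}$, and then to identify that invariant with $\tilde\beta P^r$ by an induction on $r$ whose engine is the order of the operations involved. By Theorem~\ref{Maunder} the differential $d_{2r(p-1)+1}$ is induced by the operation $k^{2r(p-1)+1}$, and by Theorem~\ref{Buch.1} the only differentials that can be nonzero on pages $2$ through $2r(p-1)+1$ are the $d_{2j(p-1)+1}$ with $1\le j\le r$. Since $k^{2s+1}=\beta\widetilde{k}^{2s+1}$ and $\tilde\beta=\beta\rho_*$, it is enough to establish the refined statement that, modulo the images of the previous differentials,
\[
\widetilde{k}^{2r(p-1)+1}\bigl(p^{r-1}x\bigr)=\varepsilon_r\,\rho_*P^r(x)
\]
for some $p$-local unit $\varepsilon_r$; applying $\beta$ then yields $k^{2r(p-1)+1}\bigl(p^{r-1}x\bigr)=\varepsilon_r\tilde\beta P^r(x)$, which is the assertion. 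As all operations in sight are stable and natural, it suffices to verify this on the universal example, i.e.\ on the fundamental class of the Postnikov stage $\bu_{(p)}(r(p-1))$, and then transport the result to an arbitrary spectrum $X$ by naturality.

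The induction runs as follows. The base case $r=1$ is the computation of the first Postnikov invariant $k^{2p-1}\in H^{2p-1}\bigl(K(\Z_{(p)});\Z_{(p)}\bigr)$: by degree reasons and the known mod $p$ cohomology of $K(\Z_{(p)})$, its reduction modulo $p$ is a nonzero multiple of $\beta P^1\iota$ (equivalently $Q_1\iota$, where $Q_1=P^1\beta-\beta P^1$ is the first Milnor primitive and $\beta\iota=0$), so its integral lift is $\varepsilon_1\tilde\beta P^1$ with $\varepsilon_1$ a unit because the invariant is nonzero of exact order $p$. For the inductive step, observe that every class $\tilde\beta P^j(x)$ lies in the image of $\tilde\beta$ and hence is annihilated by $p$. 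Assuming the formula for all $j<r$, linearity of the differentials gives
\[
d_{2j(p-1)+1}\bigl(p^{r-1}x\bigr)=p^{\,r-j}\varepsilon_j\,\tilde\beta P^j(x)=0,\qquad 1\le j<r,
\]
so that $p^{r-1}x$ survives to the page $2r(p-1)+1$ and the differential $d_{2r(p-1)+1}\bigl(p^{r-1}x\bigr)$ is defined. This is exactly the mechanism that produces the divisibility factor $p^{r-1}$.

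It remains to compute the leading term of $d_{2r(p-1)+1}\bigl(p^{r-1}x\bigr)$. Here I would invoke the structure of the mod $p$ cohomology of connective $\K$-theory, $H^*\bigl(\bu_{(p)};\Z/p\Z\bigr)\cong\mathcal{A}_p/\!\!/E(Q_0,Q_1)$ with $Q_0=\beta$, which shows that in degree $2r(p-1)$ the admissible power $P^r$ represents a class lying outside the ideal generated by $Q_0$ and $Q_1$. Reducing $k^{2r(p-1)+1}$ modulo $p$ and evaluating on a class $x$ coming from integral cohomology (so that $\beta\bar x=0$ for the reduction $\bar x$), the contributions of the remaining admissible monomials of that degree are decomposable and, on a class that has survived the earlier differentials, fall into the images of those differentials, i.e.\ into the indeterminacy of $k^{2r(p-1)+1}$. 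Thus $P^r$ is the unique surviving leading term, and its integral lift through $\widetilde{k}^{2r(p-1)+1}$ is $\varepsilon_r\rho_*P^r(x)$, establishing the refined formula up to the determination of the coefficient $\varepsilon_r$.

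Finally one must show that $\varepsilon_r$ is a unit in $\Z_{(p)}$, i.e.\ is prime to $p$, rather than an element that could be divisible by $p$ and make the leading term collapse. Rather than tracking structure constants through the tower, I would determine $\varepsilon_r$ by evaluating the entire formula on a test space whose complex $\K$-theory is classically known, namely a lens space $L^{2N-1}(p)$: its reduced group $\widetilde{\K}^0$ is a finite abelian $p$-group whose successive filtration quotients have orders growing as $p^{r-1}$, and these orders are governed precisely by the differentials $d_{2r(p-1)+1}$. Comparison with the known answer forces $\varepsilon_r$ to be prime to $p$. I expect this last point to be the main obstacle. For $r<p$ one can alternatively see $P^r$ emerge, up to the unit $r!$, from the $r$-fold iterate of the transgression of $P^1$ through the Postnikov tower, using the Adem relation $P^1P^r\equiv(r+1)P^{r+1}$ modulo $p$; but for $r\ge p$ this iterate degenerates because $r!\equiv 0$, and one genuinely needs the admissible operation $P^r$ to appear on its own. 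Controlling the decomposable corrections and the exact $p$-divisibility uniformly in $r$ — so as to prove simultaneously that the leading term is $P^r$ and that its coefficient is a $p$-local unit — is the delicate heart of the argument.
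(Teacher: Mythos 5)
You should first note that the paper contains no proof of this statement at all: it is imported verbatim from Buchstaber (\cite{Buch.2}, Theorem~2.1) and used as a black box, so there is no in-paper argument to compare against and your proposal has to stand on its own. As a standalone proof it has a genuine gap, which you partly acknowledge yourself. The formal skeleton is fine: reducing via Theorem~\ref{Maunder} to the $k$-invariant $k^{2r(p-1)+1}$, and the observation that the formula for $j<r$ forces $d_{2j(p-1)+1}(p^{r-1}x)=0$ (since values of $\tilde{\beta}P^j$ have exponent $p$), correctly explains why $p^{r-1}x$ survives to page $2r(p-1)+1$. But the heart of the theorem --- that the pullback of $k^{2r(p-1)+1}$ along a lift of $p^{r-1}\iota$ is $\varepsilon_r\tilde{\beta}P^r(\iota)$ with $\varepsilon_r$ a $p$-local \emph{unit}, uniformly in $r$ --- is asserted, not proved. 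The appeal to $H^*\bigl(\bu_{(p)};\Z/p\Z\bigr)\cong \mathcal{A}_p/\!\!/E(Q_0,Q_1)$ does not compute this: that isomorphism describes the cohomology of the whole spectrum, while the $k$-invariant lives on the finite stage $\bu_{(p)}\bigl(r(p-1)\bigr)$ and must be evaluated along a specific lift. Relatedly, your choice of universal example is off: the operation in question is $x\mapsto d_{2r(p-1)+1}(p^{r-1}x)$, whose universal instance is the fundamental class of $K(\Z_{(p)})$ together with a lift of its $p^{r-1}$-multiple, not the fundamental class $u$ of the Postnikov stage; $k^{2r(p-1)+1}(u)$ is the $k$-invariant itself, and the theorem does \emph{not} claim it equals $\varepsilon_r\tilde{\beta}P^r(u)$ --- the factor $p^{r-1}$ is essential and is not a function of $u$.

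Your proposed calibration of $\varepsilon_r$ also fails concretely as stated. For the lens space $L^{2N-1}(p)=S^{2N-1}/(\Z/p)$, every reduced integral cohomology class in positive even degree has exponent $p$, so $p^{r-1}x=0$ for all $r\ge 2$; the theorem's formula then only says that $\tilde{\beta}P^r(x)$ lies in the images of the previous differentials, which carries no information whatsoever about $\varepsilon_r$. To detect $\varepsilon_r$ one needs classes with $p^{r-1}x\ne 0$, e.g.\ on skeleta of $K(\Z/p^k,1)$ (lens spaces $S^{2N-1}/(\Z/p^k)$) with $k\ge r$ --- and running the AHSS there against the known $\K$-theory of these spaces is essentially Buchstaber's original computation, not an independent shortcut. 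So what your outline leaves open (the leading term being $P^r$ with unit coefficient, and the control of decomposable corrections for $r\ge p$ where the $P^1$-iteration trick dies by the Adem relation $P^1P^r\equiv(r+1)P^{r+1}$) is precisely the content of the cited theorem; as you candidly write, this ``delicate heart of the argument'' is missing.
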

 
Theorems \ref{Maunder} and \ref{Buch.1} imply the following statement. 
\begin{Cor}\label{1000010100} 
        If the operation $k^{2s+1}$ is defined on a class $x\in H^n\bigl(X;\Z_{(p)}\bigr)$, then the operation $k^{2s+1}$ acts trivially on $x$, unless $s\equiv 0 \bmod (p-1)$ . 
\end{Cor}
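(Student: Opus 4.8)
The plan is to obtain the corollary as a direct consequence of Maunder's identification of the operation $k^{2s+1}$ with the differential $d_{2s+1}$ (Theorem~\ref{Maunder}) together with Buchstaber's vanishing result (Theorem~\ref{Buch.1}). No new construction is needed; the whole argument is a translation back and forth between the language of higher cohomological operations and the language of the AHSS.

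First I would fix a class $x\in H^n\bigl(X;\Z_{(p)}\bigr)$ on which $k^{2s+1}$ is defined, and record that by part~(1) of Theorem~\ref{Maunder} this is exactly the statement that the differential $d_{2s+1}^{n,2t}$ is defined on $x$ regarded as an element of $E_2^{n,2t}=H^n\bigl(X;\Z_{(p)}\bigr)$; equivalently, $x$ survives to the page $E_{2s+1}$ and determines a class $\tilde{x}\in E_{2s+1}^{n,2t}$. Next, assuming $s\not\equiv 0 \bmod (p-1)$, I would invoke Theorem~\ref{Buch.1} to conclude that the entire differential $d_{2s+1}$ vanishes, so in particular $d_{2s+1}^{n,2t}(\tilde{x})=0$ in $E_{2s+1}^{n+2s+1,2t-2s}$.

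Finally I would feed this back through part~(2) of Theorem~\ref{Maunder}, which asserts the equality of subsets $d_{2s+1}^{n,2t}(\tilde{x})=k^{2s+1}(x)$ inside $H^{n+2s+1}\bigl(X;\Z_{(p)}\bigr)$, where the left-hand side is read as a coset of the union of images of the previous differentials. Since the value of the differential is zero, this coset is precisely the subgroup of those images, which contains~$0$; as $k^{2s+1}(x)$ equals that coset as a subset, we obtain $0\in k^{2s+1}(x)$, i.e.\ $k^{2s+1}$ acts trivially on $x$. The only point that deserves care is the bookkeeping of indeterminacies: the coset equality in Theorem~\ref{Maunder} already identifies $\Indet^{n}(k^{2s+1})$ with the union of images of the previous differentials, so the vanishing of $d_{2s+1}^{n,2t}(\tilde{x})$ is literally the assertion that $k^{2s+1}(x)$ is the zero coset, and no separate comparison of indeterminacies is required. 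I expect this matching of the operation's indeterminacy with the image subgroup to be the only place where one must be attentive; everything else is a formal chaining of the two cited theorems.
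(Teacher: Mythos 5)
Your proposal is correct and is exactly the paper's argument: the paper derives this corollary directly from Theorem~\ref{Maunder} (identifying $k^{2s+1}$ with the differential $d_{2s+1}$, including the coset bookkeeping) combined with Theorem~\ref{Buch.1} (vanishing of $d_{2s+1}$ when $s\not\equiv 0 \bmod (p-1)$), with no additional ingredients. Your careful remark that the vanishing of $d_{2s+1}^{n,2t}(\tilde{x})$ on the $E_{2s+1}$-page means the corresponding coset in $E_2$ is the union of images of previous differentials, hence contains $0$, is precisely the point that makes the translation work.
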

Thus only operations $k^{2r(p-1)+1}$ can act nontrivially. Denote the operation $k^{2r(p-1)+1}$ by~$\Phi_r$. 
\begin{Cor} \label{definition of Phi_r} 
	The following properties of the operations $\Phi_r$ hold. 
	\begin{enumerate}
		\item The operation $\Phi_r$ is defined on a class $x$ if and only if $\Phi_{r-1}$ acts trivially on $x$. The indeterminacy of $\Phi_r$ is equal to the union of images of $\Phi_{r-1}$ on the classes of the corresponding dimension.
		\item If $\Phi_r$ is defined on cohomology classes $x\in H^n\bigl(X;\Z_{(p)}\bigr)$ and $y\in H^m\bigl(Y;\Z_{(p)}\bigr)$, then $\Phi_r$  is defined on the class $x\times y\in H^{n+m}\bigl(X\wedge Y;\Z_{(p)}\bigr)$ and 
		$$\Phi_r(x\times y)\supset\Phi_r(x)\times y + (-1)^{n}x\times \Phi_r(y).$$ 
		\item For any cohomology class $x\in H^n\bigl(X;\Z_{(p)}\bigr)$ the operation $\Phi_r$ is defined on the class $p^{r-1}x$ and 
		$$\varepsilon_r\tilde{\beta} P^r(x)\in \Phi_r(p^{r-1}x),$$ 
		where $\varepsilon_r\in \Z_{(p)}$ is the same invertible element as in Theorem~\ref{Buch.2}. 	
         \end{enumerate} 
\end{Cor}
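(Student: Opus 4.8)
The plan is to deduce all three parts from the identification of $\Phi_r$ with the differential $d_{2r(p-1)+1}$ of the AHSS for $\K^*\otimes\Z_{(p)}$ furnished by Theorem~\ref{Maunder}, combined with Buchstaber's vanishing result (Theorem~\ref{Buch.1}) and the fact that the AHSS of the ring spectrum $\bu_{(p)}$ is a multiplicative spectral sequence.

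For part (1), the key observation is that Theorem~\ref{Buch.1} forces every differential strictly between $d_{2(r-1)(p-1)+1}$ and $d_{2r(p-1)+1}$ to vanish (the even differentials vanish automatically), so the page $E_{2r(p-1)+1}$ coincides with the homology of $E_{2(r-1)(p-1)+1}$ with respect to $\Phi_{r-1}=d_{2(r-1)(p-1)+1}$. In any spectral sequence $d_k$ is defined precisely on the classes surviving to $E_k$, i.e. on the cocycles of the preceding differential; translating this through Theorem~\ref{Maunder}(1), a class $x$ lies in the domain of $\Phi_r$ iff $\Phi_{r-1}$ is defined on $x$ and the corresponding class on $E_{2(r-1)(p-1)+1}$ is a cocycle, which is exactly the condition that $0\in\Phi_{r-1}(x)$, namely that $\Phi_{r-1}$ act trivially on $x$. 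For the indeterminacy, Theorem~\ref{Maunder}(2) identifies $\Indet(\Phi_r)$ with the union of the images of all earlier differentials; I would then observe that, since $\Phi_{r-1}(y)$ is a coset of $\Indet(\Phi_{r-1})$ for each $y$ and the latter already contains the images of $\Phi_1,\dots,\Phi_{r-2}$, the set $\bigcup_y\Phi_{r-1}(y)$ exhausts that whole union. This cascading of indeterminacies, together with the need to match the operation-theoretic indeterminacy $\Phi_r(0)$ of Definition~\ref{def. def of integer higher operations} with the subquotient description from the spectral sequence, is the step requiring the most care, and is where I expect the main obstacle to lie.

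For part (2), I would invoke multiplicativity of the AHSS with respect to the external smash product: the cross product on $E_2=H^*$ is the ordinary cohomology cross product, and each differential is a derivation satisfying the Leibniz rule $d_k(a\times b)=d_k(a)\times b+(-1)^{|a|}a\times d_k(b)$ with Koszul sign given by the total degree $|a|$. A class $x\in E_2^{n,2t}$ has total degree congruent to $n$ modulo $2$, so the sign is $(-1)^n$; feeding this into Theorem~\ref{Maunder} yields the stated Cartan formula, the inclusion $\supset$ (rather than equality) accounting for the indeterminacy of $\Phi_r$ on $x\times y$.

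For part (3), the value statement is exactly Buchstaber's formula (Theorem~\ref{Buch.2}) read through Theorem~\ref{Maunder}, giving $\varepsilon_r\tilde{\beta}P^r(x)\in\Phi_r(p^{r-1}x)$; it remains only to check that $\Phi_r$ is actually defined on $p^{r-1}x$, which I would prove by induction on $r$ using part (1). The base case $r=1$ holds because $\Phi_1=d_{2p-1}$ is the first possibly nonzero differential (all earlier ones vanish by Theorem~\ref{Buch.1} and evenness), hence is defined on all of $E_2=H^*$. For the inductive step, $\Phi_{r-1}$ is defined on $p^{r-2}x$ and, by Theorem~\ref{Buch.2}, $\varepsilon_{r-1}\tilde{\beta}P^{r-1}(x)\in\Phi_{r-1}(p^{r-2}x)$; by additivity $\Phi_{r-1}(p^{r-1}x)\ni p\,\varepsilon_{r-1}\tilde{\beta}P^{r-1}(x)$, and this vanishes because $\tilde{\beta}$ takes values in the $p$-torsion subgroup of $H^*(X;\Z_{(p)})$. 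Thus $\Phi_{r-1}$ acts trivially on $p^{r-1}x$, and part (1) shows that $\Phi_r$ is defined on $p^{r-1}x$, completing the argument.
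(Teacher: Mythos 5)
Your proposal is correct and follows essentially the same route as the paper: part (1) from Theorem~\ref{Maunder} together with Buchstaber's vanishing theorem (which the paper packages as Corollary~\ref{1000010100}), part (2) from Theorem~\ref{Maunder} plus multiplicativity of the AHSS, and part (3) from Theorem~\ref{Maunder} plus Theorem~\ref{Buch.2}. The only difference is one of detail: the paper reads the definedness of $\Phi_r$ on $p^{r-1}x$ directly off the statement of Theorem~\ref{Buch.2} (where it is implicit that $p^{r-1}x$ survives to the relevant page), whereas you verify it by an induction using the $p$-torsion of the image of $\tilde{\beta}$ --- a legitimate and slightly more careful filling-in of the same argument.
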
 
\begin{proof}
	The first statement follows from Theorem~\ref{Maunder} and Corollary~\ref{1000010100}. The second statement follows from Theorem~\ref{Maunder} and the multiplicativity of the AHSS for $\K^*\otimes \Z_{(p)}$. The last statement follows from Theorem~\ref{Maunder} and Theorem~\ref{Buch.2}. 
\end{proof}

\subsection{Duality for \texorpdfstring{$\widetilde{\Phi}_r$}{Pr}}

Denote the operation $\widetilde{k}^{2r(p-1)}$ by $\widetilde{\Phi}_r$. By remark \ref{bokstein and tilda for action of k-invariants} we see that the domain of $\Phi_r$ is equal to the domain of $\widetilde{\Phi}_r$, for a (co)homology class $x$ there is the equality of sets $\Phi_r(x)=\beta\widetilde{\Phi}_r(x)$, and there are equalities of groups $\Indet^n(\Phi_r)=\beta\Indet^n(\widetilde{\Phi}_r)$, $\Indet_n(\Phi_r)=\beta\Indet_n(\widetilde{\Phi}_r)$.

\begin{Lem}\label{properties of Phi_r on Q-free cohomology}
	 Let $X$ be a finite spectrum such that $H_{\le N}(X;\mathbb{Q})=0$ for an integer $N$. Then for any $n<N$ the following statements hold. 
\begin{enumerate}
    \item The Bockstein homomorphism $\beta$ is an isomorphism. 
    \item The operation $\Phi_r$ acts trivially on a class $x\in H^n(X;\Z_{(p)})$ if and only if $\widetilde{\Phi}_r$ acts trivially on $x$.
    \item The indeterminacy $\Indet^{n-2r(p-1)}\bigl(\widetilde{\Phi}_r\bigr)$ is equal to the union of images of the operation $\widetilde{\Phi}_{r-1}$ on the cohomology classes of dimension $n-2(r-1)(p-1)$. 
    \end{enumerate}
\end{Lem}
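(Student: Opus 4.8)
The plan is to deduce all three statements from a single observation: the rational vanishing hypothesis forces the Bockstein
\[
\beta\colon H^m\bigl(X;\Q/\Z_{(p)}\bigr)\to H^{m+1}\bigl(X;\Z_{(p)}\bigr)
\]
to be an isomorphism in the relevant degrees, after which the relations $\Phi_r=\beta\widetilde{\Phi}_r$ and $\Indet(\Phi_r)=\beta\Indet(\widetilde{\Phi}_r)$ from Remark~\ref{bokstein and tilda for action of k-invariants} let me transport everything already known about $\Phi_r$ over to $\widetilde{\Phi}_r$ simply by cancelling $\beta$. For statement~(1) I would write down the Bockstein long exact sequence
\[
H^m(X;\Q)\to H^m\bigl(X;\Q/\Z_{(p)}\bigr)\xrightarrow{\ \beta\ }H^{m+1}\bigl(X;\Z_{(p)}\bigr)\to H^{m+1}(X;\Q).
\]
Since $X$ is a finite spectrum, rational cohomology is the $\Q$-dual of rational homology, so $H_{\le N}(X;\Q)=0$ gives $H^i(X;\Q)=0$ for every $i\le N$. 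For $m=n<N$ both outer terms $H^n(X;\Q)$ and $H^{n+1}(X;\Q)$ vanish, whence $\beta$ is an isomorphism in this degree; the homology version follows by Spanier--Whitehead duality.

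For statement~(2) I would first recall that the domains of $\Phi_r$ and $\widetilde{\Phi}_r$ coincide and that $\Phi_r(x)=\beta\widetilde{\Phi}_r(x)$ as sets. The direction $0\in\widetilde{\Phi}_r(x)\Rightarrow 0\in\Phi_r(x)$ is immediate by applying $\beta$ and needs no hypothesis. For the converse, $0\in\Phi_r(x)=\beta\widetilde{\Phi}_r(x)$ produces some $w\in\widetilde{\Phi}_r(x)$ with $\beta w=0$, and I would then invoke injectivity of $\beta$ at the output degree to force $w=0$, so that $0\in\widetilde{\Phi}_r(x)$. Equivalently, once $\beta$ is injective it restricts to a bijection of the coset $\widetilde{\Phi}_r(x)$ onto the coset $\Phi_r(x)$ taking $0$ to $0$, so one coset contains $0$ exactly when the other does.

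For statement~(3) the target degree is $n$. By Corollary~\ref{definition of Phi_r}(1) the indeterminacy $\Indet^{n-2r(p-1)}(\Phi_r)$ equals the union $\bigcup_y\Phi_{r-1}(y)$ of images of $\Phi_{r-1}$ over classes $y$ of dimension $n-2(r-1)(p-1)$. Rewriting both sides through $\beta$, using $\Indet(\Phi_r)=\beta\Indet(\widetilde{\Phi}_r)$ and $\Phi_{r-1}(y)=\beta\widetilde{\Phi}_{r-1}(y)$, gives
\[
\beta\,\Indet^{n-2r(p-1)}\bigl(\widetilde{\Phi}_r\bigr)=\beta\Bigl(\textstyle\bigcup_y\widetilde{\Phi}_{r-1}(y)\Bigr).
\]
Because $n<N$, statement~(1) makes $\beta$ injective in degree $n$, and cancelling it yields the asserted equality of subsets of $H^n\bigl(X;\Q/\Z_{(p)}\bigr)$.

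The main subtlety, and the only place the degree range genuinely bites, is statement~(2): there the cancellation of $\beta$ happens at the output degree $n+2r(p-1)$ rather than at $n$, so I must guarantee $\beta$ is injective there, i.e. that $H^{n+2r(p-1)}(X;\Q)=0$. This is precisely where the rational vanishing hypothesis has to be read as covering the full range of degrees touched by the operation; in the applications of interest $X$ has identically vanishing reduced rational homology, so $\beta$ is an isomorphism in every degree and the difficulty evaporates. Keeping careful track of which Bockstein is being inverted in which degree, and confirming that the hypothesis supplies injectivity exactly there, is the part demanding the most care, while the remainder is formal manipulation of the identities $\Phi_r=\beta\widetilde{\Phi}_r$ and $\Indet(\Phi_r)=\beta\Indet(\widetilde{\Phi}_r)$.
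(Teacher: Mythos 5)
Your proof is correct and takes essentially the same route as the paper's: the long exact sequence of the coefficient sequence $0\to\Z_{(p)}\to\Q\to\Q/\Z_{(p)}\to0$ shows that $\beta$ is an isomorphism in dimensions below $N$, and statements (2) and (3) then follow by cancelling $\beta$ against the identities $\Phi_r=\beta\widetilde{\Phi}_r$ and $\Indet(\Phi_r)=\beta\Indet\bigl(\widetilde{\Phi}_r\bigr)$ of Remark~\ref{bokstein and tilda for action of k-invariants}, combined with statement 1 of Corollary~\ref{definition of Phi_r}. The degree subtlety you flag in statement (2) is genuine---the paper's own proof also silently inverts $\beta$ in the output degree $n+2r(p-1)$, which requires that degree to lie in the vanishing range---but this is an imprecision of the lemma's formulation rather than of your argument, and in every actual application (the induction step of Theorem~\ref{duality}, where $n<N-2r(p-1)$, and the homology version used in Theorem~\ref{Lem for example}, where the operation lowers degree) the relevant degree does fall below $N$.
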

\begin{proof}
Consider the long exact sequence of cohomology groups associated with the short exact sequence 
\[
0\to \Z_{(p)} \to \mathbb{Q} \to \mathbb{Q}/\Z_{(p)} \to 0
\]
of coefficient groups. Since $H_{\le N}(X;\mathbb{Q})=0$, then the Bockstein homomorphism $\beta$ is an isomorphism in dimensions less than $N$. This implies statement $1$. Since $\Phi_r(x)=\beta\widetilde{\Phi}_r(x)$ and $\beta$ is an isomorphism, then $0\in \Phi_r(x)$ if and only if  $0\in \widetilde{\Phi}_r(x)$. This implies statement $2$. Since  $\Phi_{r-1}(x)=\beta\widetilde{\Phi}_{r-1}(x)$, $\Indet^n(\Phi_r)=\beta\Indet^n(\widetilde{\Phi}_r)$, and $\beta$ is an isomorphism, then statement $1$ of Corollary~\ref{definition of Phi_r} implies statement $3$. 
\end{proof}

 For any classes $x\in H_n(X; G)$ and $y\in H^n(X; H)$, their Kronecker pairing $\langle x,y \rangle \in G\otimes_{\Z} H$ is defined. Since $\Z_{(p)}\otimes_{\Z}\bigl(\mathbb{Q}/\Z_{(p)}\bigr)=\mathbb{Q}/\Z_{(p)}$, then the value of the pairing of a class with $\Z_{(p)}$-coefficients and  a class with $\mathbb{Q}/\Z_{(p)}$-coefficients lies in $\mathbb{Q}/\Z_{(p)}$.

\begin{Lem}\label{tech for duality}
 Let $X$ be a finite spectrum such that $H_{\le N}(X;\mathbb{Q})=0$. Then for any $n<N$ the following statements hold. 
\begin{enumerate}
	\item The pairing $\langle \  ,\  \rangle \colon H_n(X;\Z_{(p)})\otimes H^n(X;\Z_{(p)})\to \Z_{(p)}$ is trivial. 
	\item The pairing $\langle \ , \  \rangle$ of $\Z_{(p)}$-classes with $\mathbb{Q}/\Z_{(p)}$-classes induces isomorphisms:
	$$H^n\bigl(X;\mathbb{Q}/\Z_{(p)}\bigr)\cong  \Hom\bigl(H_n(X;\Z_{(p)}),\mathbb{Q}/\Z_{(p)}\bigr),$$
	$$H_n\bigl(X;\Z_{(p)}\bigr)\cong  \Hom\bigl(H^n(X;\mathbb{Q}/\Z_{(p)}),\mathbb{Q}/\Z_{(p)}\bigr).$$
	\item For a subgroup $A \subset H^n\bigl(X;\mathbb{Q}/\Z_{(p)}\bigr)$ and a class $x\in H^n\bigl(X;\mathbb{Q}/\Z_{(p)}\bigr)$ such that $x \notin A$, there exists $z\in H_n\bigl(X; \Z_{(p)}\bigr)$ such that $\langle z , x \rangle\ne 0$ and $\langle z , a \rangle=0$ for all $a\in A$. 
\end{enumerate}
\end{Lem}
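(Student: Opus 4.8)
The plan is to deduce all three statements from the universal coefficient theorem together with the single observation that the rational vanishing hypothesis forces the relevant homology groups to be finite $p$-groups. First I would record that, since $X$ is a finite spectrum, each $H_n(X;\Z_{(p)})$ is a finitely generated $\Z_{(p)}$-module, and since $H_n(X;\mathbb{Q})=H_n(X;\Z_{(p)})\otimes_{\Z_{(p)}}\mathbb{Q}$ vanishes for $n\le N$, the group $H_n(X;\Z_{(p)})$ is torsion for such $n$; being finitely generated over the PID $\Z_{(p)}$, it is therefore a finite abelian $p$-group whenever $n\le N$. All universal coefficient sequences below are the usual short exact sequences for the finite spectrum $X$ taken over the base ring $\Z_{(p)}$.

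For statement (1), the universal coefficient theorem identifies the Kronecker pairing $H_n(X;\Z_{(p)})\otimes H^n(X;\Z_{(p)})\to\Z_{(p)}$ with the evaluation map $x\otimes y\mapsto \hat y(x)$, where $\hat y\in\Hom_{\Z_{(p)}}\bigl(H_n(X;\Z_{(p)}),\Z_{(p)}\bigr)$ is the image of $y$. Since $H_n(X;\Z_{(p)})$ is torsion while $\Z_{(p)}$ is torsion-free, this $\Hom$-group vanishes, so the pairing is trivial. For the first isomorphism in statement (2), I would use that $\mathbb{Q}/\Z_{(p)}$ is divisible, hence injective over the PID $\Z_{(p)}$; consequently the $\Ext$ term in the universal coefficient sequence vanishes and the natural map, which is exactly $y\mapsto\langle -,y\rangle$, gives an isomorphism $H^n(X;\mathbb{Q}/\Z_{(p)})\cong\Hom\bigl(H_n(X;\Z_{(p)}),\mathbb{Q}/\Z_{(p)}\bigr)$. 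The second isomorphism then follows by dualizing: applying $\Hom(-,\mathbb{Q}/\Z_{(p)})$ to the first isomorphism and precomposing with the canonical Pontryagin double-duality isomorphism $H_n(X;\Z_{(p)})\cong\Hom\bigl(\Hom(H_n(X;\Z_{(p)}),\mathbb{Q}/\Z_{(p)}),\mathbb{Q}/\Z_{(p)}\bigr)$, valid because $H_n(X;\Z_{(p)})$ is a finite $p$-group, yields an isomorphism $H_n(X;\Z_{(p)})\cong\Hom\bigl(H^n(X;\mathbb{Q}/\Z_{(p)}),\mathbb{Q}/\Z_{(p)}\bigr)$; a direct chase shows this composite is precisely $z\mapsto\langle z,-\rangle$, so it is induced by the Kronecker pairing as claimed.

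Statement (3) is then a formal consequence of (2). Both groups $H_n(X;\Z_{(p)})$ and $H^n(X;\mathbb{Q}/\Z_{(p)})$ are finite, and (2) asserts that the Kronecker pairing between them into $\mathbb{Q}/\Z_{(p)}$ is perfect. For the subgroup $A\subset H^n(X;\mathbb{Q}/\Z_{(p)})$ I would set $A^{\perp}=\{z\in H_n(X;\Z_{(p)}):\langle z,a\rangle=0 \text{ for all } a\in A\}$. Perfectness of the pairing gives the standard double-annihilator identity $(A^{\perp})^{\perp}=A$ for finite groups; since $x\notin A=(A^{\perp})^{\perp}$, the very definition of the second annihilator produces a class $z\in A^{\perp}$ with $\langle z,x\rangle\neq0$, which is exactly the required $z$.

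The only genuinely delicate point is to verify that the abstract isomorphisms produced by the universal coefficient theorem and by Pontryagin double-duality coincide with the maps actually induced by the Kronecker pairing; I expect this to require a careful but routine naturality check, after which the double-annihilator argument for (3) is immediate.
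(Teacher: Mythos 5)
Your proposal is correct and follows essentially the same route as the paper: torsion groups versus the torsion-free $\Z_{(p)}$ for statement (1), and the universal coefficient theorem with injectivity of $\mathbb{Q}/\Z_{(p)}$ plus Pontryagin double duality for statement (2). For statement (3) you invoke the double-annihilator identity $(A^{\perp})^{\perp}=A$ for perfect pairings of finite groups, whose standard proof (lift a character of $H^n\bigl(X;\mathbb{Q}/\Z_{(p)}\bigr)/A$ that is nonzero on the image of $x$) is precisely the paper's explicit construction via the quotient map, so the content coincides.
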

\begin{proof}
   Since $X$ is finite and $H^n(X;\mathbb{Q})=0$, then both $H_n(X;\Z_{(p)})$ and $H^n(X;\Z_{(p)})$ are finite abelian \linebreak $p$-groups. Since $\Z_{(p)}$ does not contain $p$-torsion, then for any abelian $p$-group $G$ we have that \linebreak $\Hom(G,\Z_{(p)})=0$. In particular, $\Hom(H^n(X;\Z_{(p)}),\Z_{(p)})=\Hom(H_n(X;\Z_{(p)}),\Z_{(p)})=0$. This implies statement $1$. 
   
   The universal coefficient theorem yields the short exact sequence
 $$
 0\to \Ext\bigl(H_{n-1}(X;\Z_{(p)}), \mathbb{Q}/\Z_{(p)}\bigr) \to H^n\bigl(X;\mathbb{Q}/\Z_{(p)}\bigr) \to \Hom\bigl(H_n(X;\Z_{(p)}),\mathbb{Q}/\Z_{(p)}\bigr) \to 0. 
 $$
  Since $\mathbb{Q}/\Z_{(p)}$ is divisible, then $\Ext\bigl(H_{n-1}(X;\Z_{(p)}), \mathbb{Q}/\Z_{(p)}\bigr)=0$ and therefore
  \[
  	 H^n\bigl(X;\mathbb{Q}/\Z_{(p)}\bigr) \cong \Hom\bigl(H_n(X;\Z_{(p)}),\mathbb{Q}/\Z_{(p)}\bigr).
\]
    Now let us proof that  $H_n\bigl(X;\Z_{(p)}\bigr)\cong  \Hom\bigl(H^n(X;\mathbb{Q}/\Z_{(p)}),\mathbb{Q}/\Z_{(p)}\bigr)$. Recall that the Pontryagin dual of an abelian group $G$ is the group $\widehat{G}= \Hom(G, \mathrm{S}^1)$. Note that for a finite abelian $p$-group $G$ there is an isomorphism $\widehat{G}\cong \Hom(G, \mathbb{Q}/\Z_{(p)})$. Therefore, the Pontryagin duality theorem yields that the map 
 $$
H_n(X;\Z_{(p)}) \to \Hom\Bigl( \Hom\bigl(H_n(X;\Z_{(p)}),\mathbb{Q}/\Z_{(p)}\bigr), \mathbb{Q}/\Z_{(p)} \Bigr)
 $$
is an isomorphism. Thus 
  $$ \Hom\bigl(H^n(X;\mathbb{Q}/\Z_{(p)}),\mathbb{Q}/\Z_{(p)}\bigr) \cong \Hom\Bigl( \Hom\bigl(H_n(X;\Z_{(p)}),\mathbb{Q}/\Z_{(p)}\bigr), \mathbb{Q}/\Z_{(p)} \Bigr) \cong H_n(X;\Z_{(p)}).$$
 This concludes the proof of statement $2$.
 
 Denote by $j\colon H^n\bigl(X;\mathbb{Q}/\Z_{(p)}\bigr) \to H^n\bigl(X;\mathbb{Q}/\Z_{(p)}\bigr)/A$ the quotient map. Since $x\notin A$, then $j(x)\ne 0$. Since $H^n(X;\mathbb{Q}/\Z_{(p)})/A$ is a finite abelian $p$-group, then there exists 
 \[
 f\in \Hom\bigl(H^n(X;\mathbb{Q}/\Z_{(p)})/A , \mathbb{Q}/\Z_{(p)}\bigr)
 \]
 such that $f\bigl(j(x)\bigr)\ne 0$. So we can take $z$ to be the image of $f\circ j$ under the isomorphism 
\[
 \Hom\bigl(H^n(X;\mathbb{Q}/\Z_{(p)}), \mathbb{Q}/\Z_{(p)}\bigr) \cong H_n\bigl(X; \Z_{(p)}\bigr). 
 \tag*{\qedhere}
 \]
\end{proof}

 \begin{Th}\label{duality}
	Let $X$ be a finite spectrum such that $H_{\le N}(X;\mathbb{Q})=0$ for an integer $N$. Then for any $n<N-2r(p-1)$ the following statements hold. 
	\begin{enumerate}
		\item[1a.] For a cohomology class $x\in H^n\bigl(X;\Z_{(p)}\bigr)$, the following two conditions are equivalent:
		\begin{enumerate}[label=(\roman*)]
			\item $\Phi_r$ is defined on $x$;
			\item $\langle \Indet_{n+2r(p-1)}(\widetilde{\Phi}_r), x \rangle  =0$.
		\end{enumerate} 
	        \item[1b.] For a homology class $x\in H_{n+2r(p-1)}\bigl(X;\Z_{(p)}\bigr)$, the following two conditions are equivalent:
		\begin{enumerate}[label=(\roman*)]
			\item $\Phi_r$ is defined on $x$;
			\item $\langle x,  \Indet^{n}(\widetilde{\Phi}_r) \rangle  =0$.
		\end{enumerate} 
		\item[2.] Suppose that $\Phi_r$ is defined on classes $x\in H^n\bigl(X;\Z_{(p)}\bigr)$ and $y\in H_{n+2r(p-1)}\bigl(X;\Z_{(p)}\bigr)$.  Then 
		$$\langle y, \widetilde{\Phi}_r(x)\rangle  =-\langle \widetilde{\Phi}_r(y), x \rangle.$$
		The pairings in both sides of this equality are well-defined elements of $\mathbb{Q}/\Z_{(p)}$, since by statements $1a, 1b$ we have that $\langle \Indet_{n+2r(p-1)}(\widetilde{\Phi}_r), x \rangle  =0$ and $\langle y, \Indet^{n}(\widetilde{\Phi}_r)\rangle  =0$. 
	\end{enumerate}
\end{Th}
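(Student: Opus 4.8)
The plan is to treat statement~2 as the technical core, deducing it from a product formula for $\widetilde{\Phi}_r$ together with the Spanier--Whitehead coevaluation $\eta$; statements~1a and~1b enter partly as a by-product of this computation (well-definedness of the two pairings) and are completed by a parallel induction on~$r$ using the perfect pairing of Lemma~\ref{tech for duality}. Throughout I work with the alternative definition of the homology operations via the Spanier--Whitehead dual, so that $D\bigl(\widetilde{\Phi}_r(y)\bigr)=\widetilde{\Phi}_r\bigl(D(y)\bigr)$.

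First I would record a Cartan formula for the lifted operation: for classes $a,b$ on which $\Phi_r$ is defined,
\[
\widetilde{\Phi}_r(a\times b)\supset \widetilde{\Phi}_r(a)\times b + a\times\widetilde{\Phi}_r(b).
\]
The sign here is $+$, and this is forced by the even degree $2r(p-1)$ of $\widetilde{\Phi}_r$. To see it I would apply $\beta$: since $\beta$ passes an integral factor with a Koszul sign, one has $\beta\bigl(\widetilde{\Phi}_r(a)\times b+a\times\widetilde{\Phi}_r(b)\bigr)=\Phi_r(a)\times b+(-1)^{|a|}a\times\Phi_r(b)$, which is exactly the integral Cartan formula of Corollary~\ref{definition of Phi_r}(2); as $\beta$ is injective in the relevant range (Lemma~\ref{properties of Phi_r on Q-free cohomology}(1)), the displayed formula follows. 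This passage through $\beta$ is what converts the clean even-degree sign into the $(-1)^{|a|}$ of $\Phi_r$, and is ultimately responsible for the minus sign in the final identity.

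To prove statement~2 I would apply the Cartan formula to $a=D(y)$, $b=x$, giving $a\times b\in H^{-2r(p-1)}\bigl(D(X)\wedge X;\Z_{(p)}\bigr)$, and then pull back along $\eta\colon \mathbb{S}\to D(X)\wedge X$. Since $\widetilde{\Phi}_r$ is defined on $D(y)$ (equivalently on $y$) and on $x$, the formula shows it is defined on $D(y)\times x$; and since $\eta^*(D(y)\times x)\in H^{-2r(p-1)}(\mathbb{S};\Z_{(p)})=0$ while the indeterminacy of $\widetilde{\Phi}_r$ on $\mathbb{S}$ in this degree vanishes, functoriality forces $\eta^*\widetilde{\Phi}_r(D(y)\times x)=\{0\}$. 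Pulling the Cartan formula back and using $\langle a',b'\rangle=\eta^*\bigl(D(a')\times b'\bigr)$, the two summands are identified with $\langle\widetilde{\Phi}_r(y),x\rangle$ and $\langle y,\widetilde{\Phi}_r(x)\rangle$, and their sum lies in $\{0\}$. Because a sum of two cosets can be a single point only if both cosets are, this simultaneously gives the equality of statement~2 and the vanishing of $\langle\Indet_{n+2r(p-1)}(\widetilde{\Phi}_r),x\rangle$ and $\langle y,\Indet^{n}(\widetilde{\Phi}_r)\rangle$, which are precisely the forward implications of~1a and~1b.

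For the converse implications I would induct on~$r$, interlocking 1a, 1b and statement~2, the point being that $\Phi_r$ is defined on a class exactly when $0\in\widetilde{\Phi}_{r-1}$ of that class (using that $\beta$ is an isomorphism). In 1a, starting from $\langle\Indet_{n+2r(p-1)}(\widetilde{\Phi}_r),x\rangle=0$, the nesting $\Indet(\widetilde{\Phi}_{r-1})\subset\Indet(\widetilde{\Phi}_r)$ (from Corollary~\ref{definition of Phi_r}(1), taking the zero argument) together with 1a for $r-1$ first yields that $\Phi_{r-1}$ is defined on $x$; writing the indeterminacy as the set of values $\widetilde{\Phi}_{r-1}(b)$ and applying statement~2 for $r-1$ turns the hypothesis into $\langle b,\widetilde{\Phi}_{r-1}(x)\rangle=0$ for all admissible $b$; and by 1b for $r-1$ these $b$ are exactly the classes orthogonal to $\Indet(\widetilde{\Phi}_{r-1})$, so nondegeneracy of the pairing in Lemma~\ref{tech for duality}(2) places every representative of $\widetilde{\Phi}_{r-1}(x)$ inside $\Indet(\widetilde{\Phi}_{r-1})$, i.e.\ $0\in\widetilde{\Phi}_{r-1}(x)$. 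Statement~1b is symmetric, with the roles of homology and cohomology exchanged. I expect the main obstacle to be organizing this induction correctly rather than any single hard estimate: one must arrange that definedness of the lower operations is established before statement~2 for $r-1$ is invoked, and must track the indeterminacies carefully through the Spanier--Whitehead dual and the perfect pairing.
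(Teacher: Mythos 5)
Your overall architecture --- prove statement 2 via a Cartan-type formula for $\widetilde{\Phi}_r$ pulled back along $\eta$, extract the forward implications of 1a/1b from the observation that a sum of two cosets can lie in $\{0\}$ only if both indeterminacy subgroups pair to zero, and get the converse implications by an induction interlocking 1a, 1b and 2 through Lemma~\ref{tech for duality} --- is essentially the paper's. But your central step, the lifted Cartan formula
\[
\widetilde{\Phi}_r(a\times b)\supset\widetilde{\Phi}_r(a)\times b+a\times\widetilde{\Phi}_r(b),
\]
is not proved by your argument, and this is exactly the point where the paper has to do real work. You deduce it from the integral formula of Corollary~\ref{definition of Phi_r}(2) by ``inverting $\beta$'', citing Lemma~\ref{properties of Phi_r on Q-free cohomology}(1). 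That lemma applies to a spectrum whose rational homology vanishes below $N$; but you need the formula not on $X$ itself, rather on $D(X)\wedge X$ (you apply it with $a=D(y)$, $b=x$). The smash $D(X)\wedge X$ does \emph{not} satisfy the hypothesis: by the K\"unneth formula, $H^0\bigl(D(X)\wedge X;\Q\bigr)$ contains $\bigoplus_{i<-N}H^i\bigl(D(X);\Q\bigr)\otimes H^{-i}(X;\Q)$, which is nonzero in general (the hypothesis only kills rational homology of $X$ \emph{below} degree $N$, as in the application in Theorem~\ref{Lem for example}). So $\beta$ has a kernel --- the image of rational cohomology --- precisely in the degree where $\widetilde{\Phi}_r\bigl(D(y)\times x\bigr)$ lives, and from $\beta(\mathrm{LHS})\supset\beta(\mathrm{RHS})$ you can only conclude that the two sides agree up to an element coming from a rational class. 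A telling symptom: your proof of statement 2 never actually uses the hypothesis $n<N-2r(p-1)$, yet that hypothesis is what makes the statement true.

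The paper's proof of statement 2 is devoted exactly to controlling this rational error term. It works in the universal example: Kahn's theorem on Postnikov invariants of a multiplicative spectrum gives $\mu^*(k)=k\times u+u\times k$ integrally on the nose, and lifting along $\beta$ only gives $\mu^*(\tilde k)=\tilde k\times u+u\times\tilde k+A$ with $A\in\Ker\beta$, i.e.\ $A$ is the image of a rational class. The substantive step is then to show that the pullback $B$ of $A$ under $g\wedge f$ vanishes, which the paper does by proving that the classifying map $g\colon D(X)\to\bu_{(p)}\bigl(r(p-1)\bigr)\wedge\mathbb{S}^{-n-2r(p-1)}$ induces the zero map on rational cohomology --- and this is exactly where $n+2r(p-1)<N$ enters, comparing the range $H^{\ge -N}\bigl(D(X);\Q\bigr)=0$ with the coconnectivity of the target. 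You would need to replace your $\beta$-inversion by this universal-example argument (or an equivalent one); as written, the key formula is asserted rather than proved. Two smaller points: your induction for the converse implications needs a base case $r=1$, which holds because $\Phi_1=\varepsilon_1\tilde{\beta}P^1$ is an ordinary, everywhere-defined operation with zero indeterminacy (Theorem~\ref{Buch.2}); and the minus sign in statement 2 comes simply from the sum of the two pairings being forced to equal zero after applying $\eta^*$, not from the Koszul sign incurred by $\beta$.
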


\begin{proof} 
We will prove the theorem by induction on $r$. 
	
	 The base step: $r=1$.
	\\ Theorem~\ref{Buch.2} implies that $\Phi_1=\varepsilon_1\tilde{\beta} P^1$ is an ordinary cohomological operation, so statements $1a, 1b$ automatically hold. Now let us prove statement $2$. Since $\beta$ is an isomorphism and $\tilde{\beta}=\beta\rho_{*}$, then $\widetilde{\Phi}_{1}(x)=\varepsilon_1\rho_{*}P^1(x)$. Similarly, $\widetilde{\Phi}_{1}(y)=\varepsilon_1\rho_{*}P^1(y)$. Using the map $\eta \colon \mathbb{S} \to D(X)\wedge X$, we obtain that  
\begin{multline*}
\langle y, \widetilde{\Phi}_1(x) \rangle=\varepsilon_1\eta^*\bigl(D(y)\times \rho_{*}P^1(x)\bigr)=\varepsilon_1\eta^*\Bigl(\rho_*P^1\bigl(D(y)\times x\bigr)- \rho_*P^1\bigl(D(y)\bigr)\times x\Bigr)=\\
=\varepsilon_1 \rho_*P^1\eta^*\Bigl(D(y)\times x\Bigr)-\varepsilon_1\eta^*\Bigl(D\bigl(D^{-1}\rho_*P^1(D(y)\bigr)\times x\Bigr)=- \langle \widetilde{\Phi}_1(y), x \rangle.
\end{multline*}
The last equality holds since $\eta^*\bigl(D(y)\times x\bigr)\in H^{0}(\mathbb{S}; \Z_{(p)})$ and hence $P^1\eta^*\bigl(D(y)\times x \bigr)=0$.

The induction step: $r-1 \mapsto r$. \\
	Firstly, let us prove statement $1a$. Statement $1$ of Corollary \ref{definition of Phi_r} implies that $\Phi_r$ is defined on $x$ iff $\Phi_{r-1}$ acts on $x$ trivially. Statement $2$ of Lemma  \ref{properties of Phi_r on Q-free cohomology} implies that $\Phi_{r-1}$ acts on $x$ trivially iff $\widetilde{\Phi}_{r-1}$ acts on $x$ trivially. Statement $3$ of Lemma \ref{tech for duality} yields that $\widetilde{\Phi}_{r-1}$ acts on $x$ trivially iff for any $y\in H_{n+2(r-1)(p-1)}(X;\Z_{(p)})$ such that $\langle y, \Indet^{n}(\widetilde{\Phi}_{r-1})\rangle=0$ we have that $\langle y, \widetilde{\Phi}_{r-1}(x)\rangle=0$. By the induction assumption the last condition is equivalent to the condition that for any $y\in H_{n+2(r-1)(p-1)}(X;\Z_{(p)})$ such that $\Phi_{r-1}$ is defined on $y$ we have that $\langle \widetilde{\Phi}_{r-1}(y), x \rangle =0$. But statement $3$ of Lemma \ref{properties of Phi_r on Q-free cohomology} implies that $\Indet_{n+2r(p-1)}(\widetilde{\Phi}_r)$ is equal to the union of images of the operation $\widetilde{\Phi}_{r-1}$ on the homology classes of dimension $n+2(r-1)(p-1)$. This concludes the proof of statement  $1a$. Statement  $1b$ can be proved analogously. 
	
	Now let us prove statement $2$. For simplicity we will write $u, k$ and $\tilde{k}$ instead of $u_{r(p-1)}, k^{2r(p-1)+1}$ and $\tilde{k}^{2r(p-1)+1}$, respectively. Since the theory $\K^*\otimes \Z_{(p)}$ is multiplicative one has a canonical map 
	$$
	\mu\colon \bu_{(p)}\bigl(r(p-1)\bigr)\wedge \bu_{(p)}\bigl(r(p-1)\bigr) \to  \bu_{(p)}\bigl(r(p-1)\bigr),
	$$
	and besides $\mu^*(u)=u\times u$. Theorem  $3.2$ of \cite{Kahn} implies that 
	$$\mu^*(k)=k\times u+u\times k.$$
	Since $\beta\tilde{k}=k$, we obtain that 
	\begin{equation}\label{111}
	\mu^*(\tilde{k})=\tilde{k}\times u+u\times \tilde{k}+A,
	\end{equation}
	where the cohomology class $A$ lies in the kernel of $\beta$ and hence is the image of a class with rational coefficients. 
	
	For a pair of maps 
	$$f\colon X\to \bu_{(p)}\bigl(r(p-1)\bigr)\wedge \mathbb{S}^n  \ \ \text{and} \ \ g\colon D(X)\to \bu_{(p)}\bigl(r(p-1)\bigr)\wedge \mathbb{S}^{-n-2r(p-1)}$$
	such that 
	$$f^*\bigl(u\times \sigma^n \bigr)=x  \ \ \text{and} \ \ g^*\bigl(u\times \sigma^{-n-2r(p-1)} \bigr)=D(y)$$ 
	consider the map 
	$$
	h=(\mu\wedge \id)\circ c \circ (g\wedge f)\colon D(X)\wedge X \to \bu_{(p)}\bigl(r(p-1)\bigr)\wedge \mathbb{S}^{-2r(p-1)},
	$$	
	where 
	$$
	c\colon  \bu_{(p)}\bigl(r(p-1)\bigr)\wedge  \mathbb{S}^{-n-2r(p-1)} \wedge \bu_{(p)}\bigl(r(p-1)\bigr)\wedge \mathbb{S}^{n}  \to \bu_{(p)}\bigl(r(p-1)\bigr)\wedge \bu_{(p)}\bigl(r(p-1)\bigr) \wedge \mathbb{S}^{-2r(p-1)} 
	$$
	is the canonical homotopy equivalence. Thus we have that 
	$$h^*(u \times \sigma^{-2r(p-1)})=D(y)\times x.$$	
	So using equality \eqref{111}, we obtain that 
	\begin{multline}\label{3974}
	h^*(\tilde{k}\times \sigma^{-2r(p-1)})=\bigl(c\circ (g\wedge f) \bigr)^*\Bigl(\tilde{k} \times u\times \sigma^{-2r(p-1)} + u\times \tilde{k} 
	\times \sigma^{-2r(p-1)}+ A\times \sigma^{-2r(p-1)}\Bigr)
	=\\= g^*(\tilde{k} \times
	 \sigma^{-n-2r(p-1)})\times x + D(y)\times f^*(\tilde{k}\times \sigma^n) + B
	\end{multline}
	where $B=\bigl(c\circ(g\wedge f)\bigr)^*(A\times\sigma^{-2r(p-1)})$.
	
	Now, let us prove that $B=0$. Recall that the rational cohomology groups of the spectrum $D(X)$ are trivial in dimensions  $\ge -N$ and the rational cohomology groups of the spectrum $$\bu_{(p)}\bigl(r(p-1)\bigr)\wedge \mathbb{S}^{-n-2r(p-1)}$$ are trivial in dimensions $< -n-2r(p-1)$.  Since $n+2r(p-1)< N$, we get  that the map
\[
g\colon D(X) \to \bu_{(p)}\bigl(r(p-1)\bigr)\wedge \mathbb{S}^{-n-2r(p-1)}
\]
induces zero homomorphism in cohomology with rational coefficients. Therefore, the map $g\wedge f$ induces zero homomorphism in cohomology with rational coefficients. Since $A$ is the image of a class with rational coefficients, we obtain that  $B=0$. 
	
	Thus, formula \eqref{3974} implies that 
	\begin{equation}\label{34345}
	\widetilde{\Phi}_r(D(y)\times x)\supset \widetilde{\Phi}_r(D(y))\times x + D(y)\times \widetilde{\Phi}_r(x).
	\end{equation}
	Consider the map $\eta \colon \mathbb{S} \to D(X)\wedge X$. By the functoriality of  $\widetilde{\Phi}_r$ we have that
$$\eta^*\Bigl(\widetilde{\Phi}_r\bigl(D(y)\times x\bigr)\Bigr)\subset\widetilde{\Phi}_r\Bigl(\eta^*\bigl(D(y)\times x\bigr)\Bigr).$$
	Since $\eta^*(D(y)\times x)\in H^0(\mathbb{S};\Z_{(p)})$, then $\widetilde{\Phi}_r\Bigl(\eta^*\bigl(D(y)\times x\bigr)\Bigr)=0$. Therefore, formula \eqref{34345} implies that 
	\[
	 	\eta^*\Bigl(D(y)\times \widetilde{\Phi}_r\bigl(x\bigr)\Bigr)+\eta^*\Bigl(\widetilde{\Phi}_r\bigl(D(y)\bigr)\times x\Bigr)=0
	\]
		Finally, since $\widetilde{\Phi}_r\bigl(D(y)\bigr)=D\bigl(\widetilde{\Phi}_r(y)\bigr)$, we have that 
		\[
		\langle y, \widetilde{\Phi}_r(x)\rangle=\eta^*\bigl(D(y)\times \widetilde{\Phi}_r(x)\bigr)=-\eta^*\bigl(\widetilde{\Phi}_r(D(y))\times x\bigr)=-\eta^*\bigl(D(\widetilde{\Phi}_r(y))\times x\bigr)=-\langle \widetilde{\Phi}_r(y), x \rangle. 
\tag*{\qedhere}		
\]
\end{proof}
 
\subsection{Application to the problem of realization of cycles}
Denote by $\omega\in H^0(\MU;\Z)$ the universal Thom class. The map $\MU\to K(\Z)$ classifying the class $\omega$ induces natural transformations of the corresponding homology and cohomology theories:
\[
\omega^*\colon \MU^n(X)\to H^n(X;\Z),
\ \ \ \ \
\omega_*\colon \MU_n(X)\to H_n(X;\Z).
\]
Homology and cohomology classes in the image of these maps will be called $\U$-realizable. Note that a homology class $x\in H_n(X;\Z)$ of a CW complex $X$ is $\U$-realizable if and only if there exist a smooth closed stably complex manifold $M$ and a map $f\colon M \to X$ such that $x=f_*[M]$. 

If $X$ is a finite spectrum, then there is the following commutative diagram	
\[
\xymatrix{
	\MU_n(X)\ar[r]^-{D} \ar[d]^{\omega_*} & \MU^{-n}\bigl(D(X)\bigr) \ar[d]^{\omega^*} \\
	H_n(X;\Z)\ar[r]^-{D}  & H^{-n}\bigl(D(X);\Z\bigr).	
}
\]
Thus a homology class $a\in H_n(X;\Z)$ of a finite spectrum $X$ is $\U$-realizable if and only if the cohomology class $D(a)\in H^{-n}\bigl(D(X);\Z\bigr)$ is $\U$-realizable. 
\begin{Lem}\label{hiegher operations and realiazation}
	If a homology class $a\in H_n(X;\Z)$ of a finite CW complex $X$ is $\U$-realizable, then for any prime $p$ and any positive integer $r$ the operation $\Phi_r$ acts trivially on $a$.   
\end{Lem}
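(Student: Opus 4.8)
The plan is to show that a $\U$-realizable class lifts, through the complex orientation of $\K$-theory, all the way up the Postnikov tower of $\bu_{(p)}$, so that every Postnikov-invariant obstruction $\Phi_r$ is forced to vanish on it. The starting point is the Conner--Floyd orientation: since complex $\K$-theory is complex orientable, there is a unit-preserving ring map $\phi\colon \MU\to\bu$ and, after localization, $\phi\colon\MU_{(p)}\to\bu_{(p)}$. Composing with the bottom Postnikov truncation $u\colon\bu_{(p)}\to K(\Z_{(p)})$ (the map classifying the standard generator $u$) yields a ring map classifying a Thom class of $\MU$ in ordinary cohomology; as $H^0(\MU;\Z)\cong\Z$ is generated by $\omega$ and every orientation is unit-normalized on $\pi_0$, this composite equals $\pm\omega$ localized at $p$. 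Thus I may assume the factorization $\MU_{(p)}\xrightarrow{\phi}\bu_{(p)}\xrightarrow{u}K(\Z_{(p)})$ of $\omega_{(p)}$.

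Next I would produce a convenient representing map for the operation. Working with the suspension spectrum of the finite complex $X$, write $a=\omega_*b$ for $b\in\MU_n(X)$ represented by $b\colon\mathbb{S}^n\to X\wedge\MU$, and set $g=(\id\wedge\phi)\circ b\colon\mathbb{S}^n\to X\wedge\bu_{(p)}$. For each $s$ let $t_s\colon\bu_{(p)}\to\bu_{(p)}(s)$ be the canonical truncation, so that $u_s\circ t_s=u$, and put $f=(\id\wedge t_s)\circ g\colon\mathbb{S}^n\to X\wedge\bu_{(p)}(s)$. Then $(\id\wedge u_s)\circ f=(\id\wedge u)\circ g=(\id\wedge\omega_{(p)})\circ b$ represents the image of $a$ in $H_n(X;\Z_{(p)})$, so $f$ is an admissible lift in the sense of Definition \ref{def. def of integer higher operations}; in particular $k^{2s+1}(a)$ is nonempty for every $s$.

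Then I would read off triviality. By construction $(\id\wedge k^{2s+1})\circ f=(\id\wedge(k^{2s+1}\circ t_s))\circ g$, and $k^{2s+1}\circ t_s$ is null: the truncation $t_s$ factors through $\bu_{(p)}(s+1)$, while the composite $\bu_{(p)}(s+1)\to\bu_{(p)}(s)\xrightarrow{k^{2s+1}}\Sigma^{2s+1}K(\Z_{(p)})$ is two consecutive maps of a Postnikov fibre sequence, hence zero. Therefore $0\in k^{2s+1}(a)$ for every $s$; specializing to $s=r(p-1)$ shows that $\Phi_r$ is defined on $a$ and acts trivially, which is exactly the claim. (Equivalently: $a$ lifts to connective $\K$-homology $\bu_{(p),n}(X)$, hence lies in the image of the edge homomorphism of the homological AHSS and is a permanent cycle, so by the homological analogue of Theorem \ref{Maunder} every $\Phi_r$ acts trivially on it.)

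The only genuinely nontrivial input is the factorization $\omega_{(p)}=u\circ\phi$, that is, that the ordinary Thom class of $\MU$ is the bottom Postnikov layer of the $\K$-theoretic orientation; everything afterward is the formal observation that a class lifting through the entire tower cannot meet any Postnikov obstruction. The one point requiring care is to confirm that $u\circ\phi$ is precisely $\pm\omega$ and that the sign is harmless (it is, since $\Phi_r(-a)=-\Phi_r(a)$ and realizability is insensitive to sign), and that the chosen $f$ really satisfies the defining compatibility $(\id\wedge u_s)\circ f=a$ rather than merely lifting some other class.
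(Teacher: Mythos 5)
Your proof is correct, but it follows a genuinely different route from the paper's. Both arguments reduce the problem to a property of the universal Thom class $\omega\in H^0(\MU;\Z)$ and then transport it to $a$ by functoriality, but the mechanisms are different. You make $\omega$ survive all obstructions by lifting it to connective $\K$-theory via the Conner--Floyd orientation $\phi\colon\MU_{(p)}\to\bu_{(p)}$: the truncations $(\id\wedge t_s)\circ g$ are then admissible lifts in the sense of Definition \ref{def. def of integer higher operations}, and each composite $k^{2s+1}\circ t_s$ is null for the formal Postnikov-tower reason, so $0\in k^{2s+1}(a)$ for every $s$ at once, with no induction. The paper never invokes $\K$-orientability of $\MU$: it passes to cohomology by Spanier---Whitehead duality, writes $D(a)=f^*(\Sigma^{-n}\omega)$ for some $f\colon D(X)\to\Sigma^{-n}\MU$, and proves by induction on $r$ (using statement $1$ of Corollary \ref{definition of Phi_r}) that each $\Phi_r$ is defined and acts trivially on $\omega$, the key point being that the values of $\Phi_r$ are torsion classes (the Postnikov invariants have finite order) while $H^*(\MU;\Z)$ is torsion free. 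What each approach buys: yours requires the nontrivial external input of the complex orientation of $\K$-theory, but in exchange it is induction-free, stays entirely in homology, and yields the stronger conceptual statement that a $\U$-realizable class lifts to $\bu_{(p)}$-homology and is therefore a permanent cycle of the AHSS via the edge homomorphism; the paper's mechanism is more elementary and self-contained given the formal properties already established in Section 2, and it applies verbatim with $\MU$ replaced by any spectrum with torsion-free integral cohomology, orientable in $\K$-theory or not.
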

\begin{proof}
Let us replace $X$ by its suspension spectrum. Since $a$ is $\U$-realizable, then $D(a)$ is $\U$-realizable and so there is a map $f\colon D(X)\to \Sigma^{-n}\MU$ such that $f^*(\Sigma^{-n}\omega)=D(a)$. Note that if $\Phi_r$ is defined on $\omega$, then $\Phi_r$ acts trivially on  $\omega$, since the cohomology groups $H^*(\MU;\Z)$ are torsion free and the operation~$\Phi_r$ has finite order. By Corollary \ref{definition of Phi_r}, the operation $\Phi_1$ is defined on any cohomology class and if $\Phi_{r-1}$ acts trivially on a class $x$, then $\Phi_r$ is defined on $x$. So by induction on $r$ we obtain that all the operations $\Phi_{r}$ act trivially on the class $\omega$ and thus on $D(a)$. Since, by definition, $\Phi_r(a)=D^{-1}\bigl(\Phi_r(D(a))\bigr)$, then all the operations $\Phi_{r}$ act trivially on $a$. 
\end{proof}
\begin{Th}\label{Lem for example} 
	Let $X$ be a $CW$ complex such that the following conditions hold.
	\begin{enumerate}
		\item $\widetilde{H}_*(X;\mathbb{Q})=0.$
		\item There exists a class $x\in H^{n}\bigl(X;\Z_{(p)}\bigr)$ such that $\Phi_r$ is defined on $x$ and $\tilde{\beta}\chi(P^r)(x)\ne0$. 
	\end{enumerate}
	Then for any $a\in H_{n+2r(p-1)}(X;\Z)$ satisfying $\langle a,\chi(P^r)(x)\rangle  \ne0$ and any integer $m$ coprime with $p$, the class $mp^{r-1}a$ is not $\U$-realizable. Moreover, such a class $a$ always exists. 
\end{Th}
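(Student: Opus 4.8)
The plan is to combine the duality theorem (Theorem~\ref{duality}) with the realization obstruction from Lemma~\ref{hiegher operations and realiazation}. The key idea is that $\langle a, \chi(P^r)(x)\rangle \ne 0$ will force the operation $\Phi_r$ to act nontrivially on the homology class $p^{r-1}a$, and since $\Phi_r$ acts trivially on any $\U$-realizable class, the class $mp^{r-1}a$ cannot be $\U$-realizable.

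First I would establish the existence of a suitable class $a$. By hypothesis $\tilde{\beta}\chi(P^r)(x)\ne 0$ in $H^{n+2r(p-1)+1}(X;\Z_{(p)})$, and since $\tilde{\beta}=\beta\rho_*$ with $\beta$ an isomorphism in the relevant range (Lemma~\ref{properties of Phi_r on Q-free cohomology}, statement~$1$, using $\widetilde{H}_*(X;\Q)=0$), we get $\rho_*\chi(P^r)(x)\ne 0$ in $H^{n+2r(p-1)}(X;\Q/\Z_{(p)})$. By the nondegeneracy of the Kronecker pairing between $\Z_{(p)}$-homology and $\Q/\Z_{(p)}$-cohomology (Lemma~\ref{tech for duality}, statement~$2$), there exists a homology class $a\in H_{n+2r(p-1)}(X;\Z_{(p)})$ with $\langle a, \rho_*\chi(P^r)(x)\rangle\ne 0$, equivalently $\langle a, \chi(P^r)(x)\rangle\ne 0$ after the appropriate identification; I would then lift $a$ to an integral class, which is possible since the integral and $\Z_{(p)}$-localized homology agree after multiplying by an integer coprime to $p$, and such multiplication does not affect the conclusion.

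The heart of the argument is to show $\Phi_r$ acts nontrivially on $p^{r-1}a$. By statement~$3$ of Corollary~\ref{definition of Phi_r} applied to the homology class $a$ (working through Spanier--Whitehead duality on a finite skeleton of $X$), the operation $\Phi_r$ is defined on $p^{r-1}a$ and $\varepsilon_r\tilde{\beta}P^r$ contributes to $\Phi_r(p^{r-1}a)$; dually, $\widetilde{\Phi}_r(p^{r-1}a)$ contains $\varepsilon_r\rho_* P^r(a)$ up to the antipode bookkeeping. Here is where the duality theorem enters: I would use statement~$2$ of Theorem~\ref{duality}, namely $\langle y,\widetilde{\Phi}_r(x)\rangle = -\langle\widetilde{\Phi}_r(y),x\rangle$, to transfer the pairing $\langle a,\chi(P^r)(x)\rangle$ into a statement about $\widetilde{\Phi}_r(a)$ paired against $x$. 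The antipode $\chi$ appears precisely because passing an operation across the Kronecker pairing via Spanier--Whitehead duality conjugates it by $\chi$; so the hypothesis $\langle a,\chi(P^r)(x)\rangle\ne 0$ is exactly the condition that the dualized operation evaluates nontrivially, witnessing that $\widetilde{\Phi}_r(a)$, and hence $\Phi_r(a)$, is nonzero modulo indeterminacy. The indeterminacy is controlled by statement~$1$ of Theorem~\ref{duality}, which guarantees the relevant pairings with $\Indet$ vanish, so the nonzero value of the pairing genuinely detects a nonzero coset.

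Finally, once $\Phi_r$ is shown to act nontrivially on $p^{r-1}a$, multiplying by $m$ coprime to $p$ preserves nontriviality because $\Phi_r$ is $\Z_{(p)}$-linear and $m$ is a unit in $\Z_{(p)}$; thus $\Phi_r$ acts nontrivially on $mp^{r-1}a$ as well. By Lemma~\ref{hiegher operations and realiazation}, any $\U$-realizable class has all $\Phi_r$ acting trivially, so $mp^{r-1}a$ is not $\U$-realizable. I expect the main obstacle to be the careful tracking of the antipode $\chi$ and the sign/unit factors through the Spanier--Whitehead duality and the Kronecker pairing: verifying that the operation dual to $P^r$ under the pairing is precisely $\chi(P^r)$, and that the indeterminacy does not absorb the nonzero pairing value, requires invoking statements~$1$ and~$2$ of Theorem~\ref{duality} in tandem rather than either alone.
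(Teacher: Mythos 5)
Your overall strategy is the right one, and several ingredients match the paper's proof: existence of $a$ via the universal coefficient theorem and divisibility of $\Q/\Z_{(p)}$, the element $\varepsilon_r\rho_*P^r(a)\in\widetilde{\Phi}_r(p^{r-1}a)$ from statement~3 of Corollary~\ref{definition of Phi_r}, Thom's formula $\langle P^r(a),x\rangle=\langle a,\chi(P^r)(x)\rangle$ to produce the antipode, indeterminacy control via statement~1a of Theorem~\ref{duality}, and the final appeal to Lemma~\ref{hiegher operations and realiazation}. (One small correction of emphasis: the paper never needs statement~2 of Theorem~\ref{duality} here; the pairing identity involving $\chi$ comes entirely from Thom's classical formula, and the duality theorem enters only through statement~1a.)

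However, there is a genuine gap in the phrase ``working through Spanier--Whitehead duality on a finite skeleton of $X$.'' Every result you need --- Theorem~\ref{duality}, Lemma~\ref{tech for duality}, Lemma~\ref{properties of Phi_r on Q-free cohomology}, and Lemma~\ref{hiegher operations and realiazation} --- is proved for \emph{finite} spectra (or finite CW complexes) with vanishing rational homology in the relevant range of dimensions. The space $X$ is infinite, and a finite skeleton or finite subcomplex of a rationally acyclic complex is in general \emph{not} rationally acyclic: the skeleta of $B\Z/p$, for instance, are lens spaces with nonzero top-dimensional rational homology. So none of the cited statements apply to a skeleton, and your core step (showing $\Phi_r$ acts nontrivially on $p^{r-1}a$ modulo a controlled indeterminacy) has no valid ambient space to run in. This is precisely why the paper argues by contradiction and devotes most of the proof to a construction you are missing: assuming $p^{r-1}a=f_*[M]$, compactness of $M$ gives a finite subcomplex $X_1\subset X$ containing $f(M)$ and supporting the class; then, since $\pi_*(X)\otimes\Q=0$ (rational Hurewicz, using $\widetilde{H}_*(X;\Q)=0$ and simple connectivity after suspending), one attaches finitely many cells to $X_1$ along spheroids that are null-homotopic in $X$, killing the rational homotopy of $X_1$ through dimension $N>n+2r(p-1)$ while keeping a factorization $X_1\to Y\to X$. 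The resulting finite complex $Y$ satisfies $H_{\le N}(Y;\Q)=0$, carries the $\U$-realizable class $p^{r-1}b$ and the class $y=i^*(x)$ with $\Phi_r$ defined on $y$ and $\langle b,\chi(P^r)(y)\rangle\ne 0$, and only there can your pairing argument and Lemma~\ref{hiegher operations and realiazation} be legitimately combined to reach the contradiction. Note also that without first assuming realizability you cannot even choose the correct finite subcomplex, so the reduction to a finite model cannot be decoupled from the contradiction structure of the proof.
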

\begin{proof}
Firstly, let us prove the existence of a class $a$. Since $\tilde{\beta}\chi(P^r)(x)\ne 0$, then $\rho_*\chi(P^r)(x)\ne 0$. As in the proof of statement $2$ of Lemma \ref{tech for duality}, by the universal coefficient theorem and the divisibility of $\Q/\Z_{(p)}$  we obtain that 
\[
H^*\bigl(X;\Q/\Z_{(p)}\bigr) \cong \Hom\bigl(H_*(X;\Z);\Q/\Z_{(p)}\bigl).
\]
Thus there exists a class $a\in H_{n+2r(p-1)}(X;\Z)$ such that $\langle a,\rho_*\chi(P^r)(x)\rangle  \ne0$. Since 
\[
\langle a,\rho_*\chi(P^r)(x)\rangle=\rho\langle a,\chi(P^r)(x)\rangle
\]
and  $\rho$ is injective, we obtain that $\langle a,\chi(P^r)(x)\rangle\ne 0$. 

Now let us prove that the class $mp^{r-1}a$ is not $\U$-realizable. Since the class $ma$ also satisfies the conditions of the lemma it is sufficient to prove that the class $p^{r-1}a$ is not $\U$-realizable. We will argue by contradiction. 

All the conditions and statements of the lemma do not change if we replace $X$ by $\Sigma X$. So we can assume that $X$ does not contain $1$-dimensional cells. In particular, $X$ is simply connected. 
	
	Let us consider a closed stably complex manifold $M$ and a map $f\colon M \to X$ such that $f_*[M]=p^{r-1}a$. Since $M$ is compact, then $f(M)$ is contained in a finite subcomplex $X'\subset X$. Let $X_1 \subset X$ be a finite subcomplex such that $X'\subset X_1$ and there exists $b'\in H_{n+2r(p-1)}(X_1; \Z)$ satisfying $(i_1)_*b'=a$, where $i_1\colon X_1\to X$ is the inclusion.  Since $X$ does not contain $1$-dimensional cells, then $X_1$ does not contain $1$-dimensional cells and in particular is simply connected. 
	
Since $X$ is simply connected and $\widetilde{H}_*(X;\mathbb{Q})=0$, then the rational Hurewicz theorem implies that $\pi_*(X)\otimes \Q=0$. Since $X_1$ is finite and simply connected, then the rational Hurewicz theorem implies that $\pi_2(X_1)\otimes \Q$ is finitely generated. Let 
\[
\alpha_l \colon S^2 \to X_1, \qquad 1\le l \le m
\]
 be spheroids such that they generate $\pi_2(X_1)\otimes \Q$ and the maps 
 \[
 i_1\circ \alpha_l\colon S^2 \to X, \qquad  1\le l \le m
 \]
are null-homotopic. Attaching $3$-cells to $X_1$ along the spheroids $\alpha_1, \dots, \alpha_m$, we get a finite CW complex~$X_2$. Let $j_2\colon X_1 \to X_2$ be the inclusion of the subcomplex. Since all the maps 
\[
i_1\circ \alpha_l\colon S^2 \to X, \ 1\le l \le m
\] 
are null-homotopic, then the map $i_1$ can be extended to a map $i_2\colon X_2 \to X$, so that $i_1=i_2\circ j_2$. By the construction, we obtained that $\pi_2(X_2)\otimes \Q=0$. Killing the next rational homotopy groups analogously, we get finite CW complexes $X_k$ such that $\pi_{\le k}(X_k)\otimes \Q=0$ and maps $i_k \colon X_k \to X$ and $j_k\colon X_1 \to X_k$ such that $i_1=i_k\circ j_k$.
	
	Denote by $Y$ the finite CW complex $X_N$ with $N> n+2r(p-1)$ and by $i$ and $j$ the maps $i_N$ and $j_N$ respectively. Then we have the following commutative diagram. 
	\[
	\xymatrix{
	Y  \ar[dr]^{i} &\\
	X_1 \ar[u]^{j} \ar[r]^{i_1}& X
	}
	\]
	Denote by $b\in H_{n+2r(p-1)}(Y; \Z)$ the homology class $j_*(b')$, and by $y\in H^n(Y;\Z_{(p)})$ the cohomology class~$i^*(x)$.
	
	By the construction of $X_1$ the class $p^{r-1}b'$ is $\U$-realizable. Therefore, the class $p^{r-1}b$  is also $\U$-realizable. Since $\langle a,\chi(P^r)(x)\rangle  \ne0$, then $\langle b,\chi(P^r)(y)\rangle\ne 0$. Since $\Phi_r$ is defined on $x$, then $\Phi_r$ is defined on $y=i^*(x)$.

	Thus, replacing $Y$ by its suspension spectrum, we get the spectrum $Y$ such that 
	\begin{enumerate}[label=(\roman*)]
	\item $Y$ is a finite CW spectrum such that $H_{\le N}(Y;\Q)=0$ and $N>n+2r(p-1)$;
	\item there is $y\in H^n(Y;\Z_{(p)})$ such that $\Phi_r$ is defined on $y$;
	\item there is $b\in H_{n+2r(p-1)}(Y; \Z)$ such that $\langle b,\chi(P^r)(y)\rangle\ne 0$;
	\item $p^{r-1}b$ is $\U$-realizable.
	\end{enumerate}
	
Now let us prove that properties (i), (ii) and (iii) imply that $\Phi_r$ is defined and acts non-trivially on~$p^{r-1}b$. Indeed, statement $3$ of Corollary  \ref{definition of Phi_r} implies that $\varepsilon_r\beta\rho_* P^r(b)\in \Phi_r(p^{r-1}b)$. Since $\beta$ is an isomorphism in this range of dimensions (statement $1$ of Lemma \ref{properties of Phi_r on Q-free cohomology}), then $\varepsilon_r\rho_* P^r(b)\in \widetilde{\Phi}_r(p^{r-1}b)$. Property~(ii) and statement $1a$ of Theorem~\ref{duality} imply that $\langle \Indet_{n+2r(p-1)}(\widetilde{\Phi}_r), y \rangle =0$. Thus we have the following equality  
\[
\langle \widetilde{\Phi}_r(p^{r-1}b), y \rangle  =\langle \varepsilon_r\rho_* P^r(b), y \rangle. 
\]
Since $\langle P^r(b), y \rangle  =\langle b, \chi(P^r)(y)\rangle$ (see \cite{Thom}, \S 3.4), we obtain that 
\[
\langle \varepsilon_r\rho_* P^r(b), y \rangle=\rho\bigl(\varepsilon_r \langle P^r(b), y \rangle\bigr)= \rho\bigl( \varepsilon_r\langle b, \chi(P^r)(y)\rangle\bigr).
\]
Since $\rho\colon \Z/p\Z \to \Q/\Z_{(p)}$ is injective, $\varepsilon_r\ne 0$ and $\langle b, \chi(P^r)(y)\rangle\ne 0$, we get that 
$
\rho\bigl( \varepsilon_r\langle b,\chi(P^r)(y)\rangle\bigr)\ne 0.
$ 
Therefore
\[
\langle \varepsilon_r\rho_* P^r(b), y \rangle\ne 0.
\]
Thus $\langle \widetilde{\Phi}_r(p^{r-1}b), y \rangle  =\langle \varepsilon_r\rho_* P^r(b), y \rangle\ne 0$. So $0 \notin \widetilde{\Phi}_r(p^{r-1}b)$. Finally, statement $2$ of Lemma \ref{properties of Phi_r on Q-free cohomology} implies that $\Phi_r$ acts non-trivially on $p^{r-1}b$. 

On the other hand, property (iv) and Lemma \ref{hiegher operations and realiazation} imply that all the operations $\Phi_r$  act trivially on $p^{r-1}b$. Contradiction. 	
 \end{proof}
 
 \section{Action of \texorpdfstring{$\chi(P^r)$}{x(Pr)}}
Let $\mathcal{A}_p$ denote the $\bmod \ p$ \ Steenrod algebra. Let $Sq^i\in\mathcal{A}_2$ and $P^i\in\mathcal{A}_p$ for $p>2$ be the Steenrod cyclic reduced powers. Let $\beta_p\in \mathcal{A}_p$ be the $\bmod~p$ Bockstein homomorphism. Let $\mathcal{A}_p/(\beta_p)$ denote the quotient of $\mathcal{A}_p$ by the two-sided ideal generated by $\beta_p$. Let $\chi$ denote the antipode of $\mathcal{A}_p$. For a Hopf algebra $\mathcal{H}$, we denote by $\mathcal{H}^*$ the dual Hopf algebra. 
	
Also we need the following notation. 	
\begin{itemize}	
	\item Let $\varGamma$ be the set of infinite series of nonnegative integers $(x_1,x_2,\dots)$ such that: 
	\begin{enumerate}[label=\alph*)]
		\item only finitely many of $x_i$ are nonzero;
		\item $x_i\ge px_{i+1}$ for all $i$.
	\end{enumerate}
	
	\item Let $\varUpsilon$ denote the set of infinite series of nonnegative integers $(x_1,x_2,\dots)$ such that only finitely many of $x_i$ are nonzero. 
	
	\item Let $\gamma \colon  \varGamma \to \varUpsilon$ be the map defined by the formula
	$$\gamma\bigl((x_1, x_2,x_3, \dots)\bigr)=(x_1-px_2,x_2-px_3,\dots).$$
	It is easy to see that $\gamma$ is a bijection with the inverse map $\gamma^{-1}\colon \varUpsilon \to \varGamma$ given by  
	\begin{align*}
	\gamma^{-1}\bigl((x_1,\dots,x_n, 0, \dots)\bigr)=
	(&x_1+px_2+p^2x_3+\ldots +p^{n-1}x_n,\\  &x_2+px_3+ \ldots +p^{n-2}x_n, \dots, x_{n-1}+px_{n},x_n, 0, \dots),
	\end{align*}
where $x_m=0$ for all $m>n$. 
	
\end{itemize}	
	
Endow the sets $\varGamma$ and $\varUpsilon$ with the right lexicographical order. Recall that in the right lexicographical order 
 $
 (x_1, x_2, \dots)>(y_1, y_2, \dots)
 $
 if there exists an index $i$ such that $x_i>y_i$ and $x_j=y_j$ for all $j>i$. It is readily verified that $\gamma$ preserves the order. 

Recall that one has Milnor's generators $\zeta_i\in \mathcal{A}_2^*$ and $\xi_i, \tau_i \in \mathcal{A}_p^*$ for  $p>2$ (see \cite{Milnor}). The following results are well known.
\begin{Prop}[\cite{Milnor}; {\cite[section 2.5]{Kochman}}]\label{structure of dual algebra}
	For $p=2$, the following isomorphisms hold:
	$$
	\mathcal{A}_2^* \cong \Z/2\Z[\zeta_1,\dots,\zeta_n,\dots], \ \dim(\zeta_i)=2^i-1;
	$$
	$$
	\bigl(\mathcal{A}_2/(\beta_2)\bigr)^* \cong \Z/2\Z[\zeta_{1}^2,\dots,\zeta_{n}^2,\dots].
	$$
	For $p>2$, the following isomorphisms hold:
	$$
	\mathcal{A}_p^* \cong \Z/p\Z[\xi_1,\dots,\xi_n,\dots]\otimes_{\Z/p\Z} \Lambda(\tau_0,\dots,\tau_n,\dots), \ \dim(\xi_i)=2(p^i-1), \ \dim(\tau_i)=2p^i-1;
	$$	
	$$
	\bigl(\mathcal{A}_p/(\beta_p)\bigr)^* \cong \Z/p\Z[\xi_1,\dots,\xi_n,\dots].
	$$
\end{Prop}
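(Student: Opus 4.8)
The plan is to follow Milnor's original strategy: realize $\mathcal{A}_p^*$ through the coaction dual to the Steenrod action and read off its generators from the cohomology of projective spaces and lens spaces. Since the $\mathcal{A}_p$-action on $H^*(X;\Z/p\Z)$ obeys the Cartan formula, its $\Z/p\Z$-linear dual is a natural coaction $\lambda\colon H^*(X;\Z/p\Z)\to H^*(X;\Z/p\Z)\otimes\mathcal{A}_p^*$ which is a map of algebras (the Cartan formula dualizes to multiplicativity of $\lambda$) and is coassociative with respect to the coproduct of $\mathcal{A}_p^*$ (this dualizes associativity of the action). Here $\lambda$ raises cohomological degree by the degree of the $\mathcal{A}_p^*$-factor, so the degrees of the generators below are forced by this convention. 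I would first establish these formal properties of $\lambda$.

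The generators are then defined on the universal examples. For $p=2$ take $X=\mathbb{RP}^\infty$, $H^*(X;\Z/2\Z)=\Z/2\Z[t]$ with $|t|=1$; instability forces every operation to send $t$ to a power of $t$, so $\lambda(t)=\sum_{i\ge0}t^{2^i}\otimes\zeta_i$ with $\zeta_0=1$, which defines $\zeta_i\in\mathcal{A}_2^*$ of degree $2^i-1$. For $p>2$ take $X=B\Z/p\Z$, $H^*(X;\Z/p\Z)=\Lambda(x)\otimes\Z/p\Z[y]$ with $|x|=1$ and $y=\beta_p x$; the same reasoning gives $\lambda(y)=\sum_{i\ge0}y^{p^i}\otimes\xi_i$ and $\lambda(x)=x\otimes1+\sum_{i\ge0}y^{p^i}\otimes\tau_i$, defining $\xi_i$ of degree $2(p^i-1)$ and $\tau_i$ of degree $2p^i-1$. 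Applying coassociativity to these formulas produces the Hopf coproducts on the generators and in particular shows that the subalgebra they generate is a sub-coalgebra.

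Next I would prove that the $\zeta_i$ (resp. the $\xi_i,\tau_i$) are algebraically independent and generate all of $\mathcal{A}_p^*$. Independence follows by applying the algebra map $\lambda$ to $X=(\mathbb{RP}^\infty)^{\times k}$ (resp. $(B\Z/p\Z)^{\times k}$) and letting $k\to\infty$: a nontrivial polynomial relation among the generators would be detected by the coefficients of suitable monomials in the many independent variables, which is impossible. Generation is then forced by a Poincar\'e-series count: by the Serre--Cartan theorem the admissible monomials form an additive basis of $\mathcal{A}_p$, so $\dim_{\Z/p\Z}(\mathcal{A}_p)_n$ has a known generating function, and this coincides termwise with the Poincar\'e series of the free graded-commutative algebra on generators in the listed degrees. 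Since the subalgebra generated by the chosen elements is free (by independence) and sits inside $\mathcal{A}_p^*$, while $\dim(\mathcal{A}_p^*)_n=\dim(\mathcal{A}_p)_n$, equality of dimensions in every degree upgrades the inclusion to the asserted isomorphism. I expect this generation step to be the main obstacle, since it is where one must import the explicit additive structure of $\mathcal{A}_p$ and match two a priori unrelated generating functions.

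Finally, for the quotients I would use that $\beta_p$ is primitive, so $(\beta_p)$ is a Hopf ideal and $\mathcal{A}_p/(\beta_p)$ is a quotient Hopf algebra whose dual is the sub-Hopf-algebra of $\mathcal{A}_p^*$ annihilating $(\beta_p)$. For $p>2$ one checks that $\Z/p\Z[\xi_1,\xi_2,\dots]$ is closed under the coproduct and is contained in this annihilator: the pairing $\langle\beta_p,-\rangle$ is supported on the dual generator $\tau_0$, whereas the coproduct of any polynomial in the $\xi_i$ never places a $\tau_0$ in either tensor factor. A Poincar\'e-series count for the quotient then promotes this containment to an equality, identifying the polynomial subalgebra with $(\mathcal{A}_p/(\beta_p))^*$. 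For $p=2$ the analogous computation, carried out using the coaction on the even subalgebra $\Z/2\Z[t^2]\subset\Z/2\Z[t]$, in which $\lambda(t^2)=\lambda(t)^2=\sum_{i\ge0}t^{2^{i+1}}\otimes\zeta_i^2$, exhibits exactly the squares $\zeta_i^2$ and identifies the annihilator of $(\Sq^1)=(\beta_2)$ with $\Z/2\Z[\zeta_1^2,\zeta_2^2,\dots]$, completing the proof.
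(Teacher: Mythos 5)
First, a point of comparison: the paper does not prove this proposition at all. It is quoted as Milnor's structure theorem, with references to \cite{Milnor} and \cite{Kochman}, and is used later only through its corollaries. So your proposal can only be measured against the classical argument, and in outline it does follow Milnor's original route: dualize the action to a multiplicative, coassociative coaction, define $\zeta_i$ (resp.\ $\xi_i,\tau_i$) from the coaction on $H^*(\mathbb{RP}^\infty;\Z/2\Z)$ (resp.\ $H^*(B\Z/p\Z;\Z/p\Z)$), read off the coproducts from coassociativity, and compare sizes with the admissible basis. The treatment of the quotients via the Hopf ideal $(\beta_p)$ and a count against the Cartan---Serre basis $\{P^I\}$ of $\mathcal{A}_p/(\beta_p)$ (Steenrod---Epstein \cite{Steenepstein}) is also sound in outline.

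The genuine gap is your independence step. The sentence ``a nontrivial polynomial relation among the generators would be detected by the coefficients of suitable monomials in the many independent variables'' is not an argument. Concretely, for $w=t_1\cdots t_k$ the coaction gives $\lambda(w)=\sum_{\alpha}t^{\alpha}\otimes \zeta^{J(\alpha)}$, where each coefficient is a single monomial, and for $\theta\in\mathcal{A}_2$ the coefficient of $t^{\alpha}$ in $\theta(w)$ equals $\langle\theta,\zeta^{J(\alpha)}\rangle$. A relation $\sum_J c_J\zeta^J=0$ in $\mathcal{A}_2^*$ thus translates into the statement that \emph{for every} $\theta$ the coefficients of the polynomial $\theta(w)$ satisfy the corresponding linear relation; to refute it you must exhibit an operation $\theta$ whose values on $w$ break the relation. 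That is exactly the faithfulness statement (every nonzero $\theta$ acts nontrivially on $(\mathbb{RP}^\infty)^{\times k}$, $k$ large), which is the hard half of the Serre---Cartan basis theorem and does not follow formally from the statement of that theorem; equivalently, it is Milnor's triangularity lemma for the pairing of the monomials $\xi^{\gamma(I')}$ against admissible monomials $P^I$ --- precisely Lemma~\ref{Milnor2} of the present paper (\cite[Lemma~8]{Milnor}).

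Note also that your division of labor cannot stand as stated: once you have the Poincar\'e-series identity (number of monomials in degree $n$ $=$ number of admissibles in degree $n$ $=\dim(\mathcal{A}_p^*)_n$), independence and spanning of the monomials are \emph{equivalent} (a family of $N$ vectors in an $N$-dimensional space is independent iff it spans), so you cannot treat independence as the easy part and then extract generation from the count --- the two steps carry the same content, and at least one of them requires the evaluation/triangularity input. Your self-diagnosis that ``generation is the main obstacle'' is therefore misplaced: given independence, generation is immediate from the count; the real obstacle is independence itself. With Lemma~\ref{Milnor2} (or the evaluation lemma on products of projective spaces) supplied, both your argument for $\mathcal{A}_p^*$ and the annihilator-plus-count argument for $\bigl(\mathcal{A}_p/(\beta_p)\bigr)^*$ go through.
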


For a sequence $J=(x_1,x_2,\dots, x_n, 0,0, \dots)\in \varUpsilon$, we denote 
\[
\zeta^J=\zeta_1^{x_1}\zeta_2^{x_2}\dots\zeta_n^{x_n}, \quad \xi^J=\xi_1^{x_1}\xi_2^{x_2}\dots\xi_n^{x_n}, \quad \tau^J=\tau_0^{x_1}\tau_1^{x_2}\dots\tau_{n-1}^{x_n}.
\]
For $p=2$, Proposition \ref{structure of dual algebra} yields that the set $\{\zeta^J \colon J\in \varUpsilon\}$ is a basis of 
$\mathcal{A}_2^*$ and $\{\zeta^{2J} \colon J\in \varUpsilon\}$ is a basis of~$\bigl(\mathcal{A}_2/(\beta_2)\bigr)^*$. For 
$p>2$, Proposition \ref{structure of dual algebra} yields that the set $\{\xi^J\tau^{J'} \colon J, J' \in \varUpsilon\}$ is
 a basis of $\mathcal{A}_p^*$ and $\{\xi^J \colon J \in \varUpsilon\}$ is a basis of $\bigl(\mathcal{A}_p/(\beta_p)\bigr)^*$.  For dual bases of dual algebras (so-called Milnor's bases) we use the upper star notation.  

Denote by $\varUpsilon_r$ the set of all sequences $(x_1,x_2,\dots, x_n, 0, 0, \dots)\in \varUpsilon$ such that  
\begin{equation}\label{5555}
	x_1+x_2(1+p)+x_3(1+p+p^2)+\dots+x_n(1+p+\dots+p^{n-1})=r.
\end{equation}
 Denote by $\varGamma_r$ the set of all sequences $I=(x_1,x_2,\dots) \in\varGamma$ such that $|I|=\sum_{i=1}^{\infty}x_i=r$. The definition of~$\gamma$ implies that $\gamma$ maps bijectively  $\varGamma_r$ to $\varUpsilon_r$. 
 
\begin{Prop}[{\cite[\S 7, cor. 6]{Milnor}}]\label{Milnor}
	The following equalities hold in $\mathcal{A}_p$:
	$$\chi(P^i)=(-1)^i\sum_{J\in \varUpsilon_i} (\xi^J)^*, \quad p>2; $$
	$$\chi(Sq^i)=\sum_{J\in \varUpsilon_i} (\zeta^J)^*, \quad p=2.$$
\end{Prop}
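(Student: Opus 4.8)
The plan is to deduce the formula by dualization, turning the statement about $\chi(P^i)$ into a computation of the antipode on the commutative dual Hopf algebra $\mathcal{A}_p^*$. Recall that under the Kronecker pairing between $\mathcal{A}_p$ and $\mathcal{A}_p^*$ the Steenrod power $P^i$ is precisely the Milnor basis element $(\xi_1^i)^*$ dual to $\xi_1^i$ (and $Sq^i=(\zeta_1^i)^*$ for $p=2$), so that $\langle P^i,\phi\rangle$ is the coefficient of the monomial $\xi_1^i$ in $\phi$. Since the antipode $\chi$ of $\mathcal{A}_p$ is adjoint to the antipode of $\mathcal{A}_p^*$ (which I also denote $\chi$), the coefficient of $(\xi^J\tau^{J'})^*$ in $\chi(P^i)$ equals $\langle P^i,\chi(\xi^J\tau^{J'})\rangle$, i.e. the coefficient of $\xi_1^i$ in $\chi(\xi^J\tau^{J'})$. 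Because $\chi$ preserves degree, only monomials of dimension $2i(p-1)$ can contribute, so it suffices to show that this coefficient is $(-1)^i$ for every pure monomial $\xi^J$ with $J\in\varUpsilon_i$ and vanishes as soon as $J'\ne0$.

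The vanishing for $J'\ne0$ is a structural observation: the ideal $(\tau_0,\tau_1,\dots)$ is a Hopf ideal of $\mathcal{A}_p^*$ (each $\psi(\tau_k)$ lies in $(\tau)\otimes\mathcal{A}_p^*+\mathcal{A}_p^*\otimes(\tau)$), hence it is preserved by the antipode. Consequently $\chi(\xi^J\tau^{J'})$ lies in $(\tau)$ whenever $J'\ne0$ and contains no pure $\xi$-monomial, so its $\xi_1^i$-coefficient is zero. For $p=2$ there are no $\tau$'s and this step is vacuous.

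The heart of the argument is the computation on the polynomial part $\Z/p\Z[\xi_1,\xi_2,\dots]$, which is a sub-Hopf-algebra on which $\chi$ is a ring homomorphism. Writing $\bar\xi_k=\chi(\xi_k)$, the defining relation of the antipode gives the recursion $\sum_{i=0}^{k}\bar\xi_{k-i}^{\,p^i}\xi_i=0$. I would then introduce the ring homomorphism $\pi\colon\Z/p\Z[\xi_1,\xi_2,\dots]\to\Z/p\Z[\xi_1]$ sending $\xi_1\mapsto\xi_1$ and $\xi_k\mapsto0$ for $k\ge2$; on any pure-$\xi$ polynomial $\pi$ preserves the coefficient of $\xi_1^i$. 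Applying $\pi$ to the recursion annihilates every term except those with $i=0,1$ and yields $\pi(\bar\xi_k)=-\xi_1\,\pi(\bar\xi_{k-1})^{p}$, which solves to $\pi(\bar\xi_k)=(-1)^k\xi_1^{d_k}$ with $d_k=1+p+\dots+p^{k-1}$. Since $\chi$ is multiplicative, $\pi(\chi(\xi^J))=\prod_k\pi(\bar\xi_k)^{x_k}=(-1)^{\sum_k kx_k}\xi_1^{\sum_k x_k d_k}$, and the exponent $\sum_k x_k d_k$ equals $i$ precisely because $J\in\varUpsilon_i$ (this is the defining equation~\eqref{5555}). Thus the coefficient of $\xi_1^i$ in $\chi(\xi^J)$ is $(-1)^{\sum_k kx_k}$.

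The remaining and, to my mind, most delicate point is to see that this sign equals $(-1)^i$ uniformly in $J$, i.e. that $\sum_k kx_k\equiv i\pmod 2$. This follows from the elementary identity $d_k-k=\sum_{j=1}^{k-1}(p^j-1)$, in which every summand $p^j-1$ is even because $p$ is odd; hence $i-\sum_k kx_k=\sum_k x_k(d_k-k)$ is even. Combining the three steps gives $\chi(P^i)=(-1)^i\sum_{J\in\varUpsilon_i}(\xi^J)^*$. The case $p=2$ is identical but simpler: signs disappear in characteristic $2$, the recursion gives $\pi(\bar\zeta_k)=\zeta_1^{2^k-1}$, and the condition $J\in\varUpsilon_i$ forces the $\zeta_1$-exponent to be $i$, yielding coefficient $1$ for every $J$, so $\chi(Sq^i)=\sum_{J\in\varUpsilon_i}(\zeta^J)^*$.
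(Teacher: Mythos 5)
Your proof is correct, but there is nothing in the paper to compare it against line by line: the paper states this proposition purely as a quotation of Milnor's result (\S 7, Cor.~6 of \cite{Milnor}) and gives no proof of its own, using it as a black box. Your argument is a sound, self-contained reconstruction, and it follows what is essentially the classical dualization route. The key steps all hold up under scrutiny: (i) the adjointness $\langle\chi(P^i),\xi^J\tau^{J'}\rangle=\langle P^i,\chi(\xi^J\tau^{J'})\rangle$ is the standard compatibility of the antipode with the dual Hopf algebra structure, and $P^i=(\xi_1^i)^*$ is part of the Milnor basis theory the paper itself assumes (Proposition~\ref{structure of dual algebra} and the surrounding discussion); (ii) the ideal $(\tau_0,\tau_1,\dots)$ is indeed a Hopf ideal, and one can even see $\chi(\tau_k)\in(\tau_0,\tau_1,\dots)$ directly by applying the antipode identity to $\psi(\tau_k)=\tau_k\otimes1+\sum_{i=0}^{k}\xi_{k-i}^{p^i}\otimes\tau_i$, which gives $\chi(\tau_k)=-\sum_{i=0}^{k}\chi(\xi_{k-i})^{p^i}\tau_i$; (iii) applying your projection $\pi$ to the recursion $\sum_{i=0}^{k}\chi(\xi_{k-i})^{p^i}\xi_i=0$ does yield $\pi(\chi(\xi_k))=-\xi_1\,\pi(\chi(\xi_{k-1}))^{p}$, whence $\pi(\chi(\xi_k))=(-1)^{k}\xi_1^{d_k}$ with $d_k=1+p+\dots+p^{k-1}$ for odd $p$ (the induction step uses $(-1)^{(k-1)p}=(-1)^{k-1}$, valid precisely because $p$ is odd), and your parity identity $d_k\equiv k\pmod{2}$ correctly turns the monomial-dependent sign $(-1)^{\sum_k kx_k}$ into the uniform $(-1)^i$. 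One small remark that streamlines the bookkeeping: since $\deg\xi_k=2(p^k-1)=2(p-1)d_k$, a pure monomial $\xi^J$ has degree $2i(p-1)$ if and only if $J\in\varUpsilon_i$, so your weight condition coincides with the degree restriction and no pure monomial of the correct degree is ever omitted. The $p=2$ case is, as you say, the same computation with the signs and the $\tau$-step deleted. What your approach buys is self-containedness at the cost of length; what the paper's citation buys is brevity, which is reasonable given that the result is a standard, frequently quoted formula of Milnor.
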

\begin{Cor}\label{Cor1}
	The following equalities hold in $\mathcal{A}_p/(\beta_p)$:
	$$\chi(P^i)=(-1)^i\sum_{J\in \varUpsilon_i} (\xi^J)^*, \quad p>2; $$
	$$\chi(Sq^{2i})=\sum_{J\in \varUpsilon_i} (\zeta^{2J})^*, \quad p=2. $$
\end{Cor}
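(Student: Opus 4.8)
The plan is to deduce the corollary from Milnor's formula (Proposition~\ref{Milnor}) by pushing it forward along the canonical projection $q\colon \mathcal{A}_p \to \mathcal{A}_p/(\beta_p)$. Since $\beta_p$ is primitive, the two-sided ideal $(\beta_p)$ is a Hopf ideal, so $q$ is a morphism of Hopf algebras and in particular commutes with the antipode, $q\circ\chi=\chi\circ q$. Because $q(P^i)=P^i$ and $q(Sq^{2i})=Sq^{2i}$, it suffices to apply $q$ to Milnor's expressions for $\chi(P^i)$ and $\chi(Sq^{2i})$ in $\mathcal{A}_p$ and then re-express the images in the Milnor basis of the quotient.

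The key step is to understand how $q$ acts on Milnor basis elements. Dualizing $q$ yields an inclusion $q^*\colon \bigl(\mathcal{A}_p/(\beta_p)\bigr)^* \hookrightarrow \mathcal{A}_p^*$ whose image is the subalgebra described in Proposition~\ref{structure of dual algebra}: the polynomial part $\Z/p\Z[\xi_1,\xi_2,\dots]$ for $p>2$, and $\Z/2\Z[\zeta_1^2,\zeta_2^2,\dots]$ for $p=2$. A monomial basis element of the quotient is carried by $q^*$ to the corresponding monomial of $\mathcal{A}_p^*$, namely $\xi^J\mapsto \xi^J\tau^0$ for $p>2$ and $\zeta^{2J}\mapsto \zeta^{2J}$ for $p=2$. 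Using the adjunction $\langle q(m^*),n\rangle=\langle m^*,q^*(n)\rangle$ and the fact that the Milnor basis is dual to the monomial basis, I get immediately that $q$ sends a Milnor basis element $m^*$ of $\mathcal{A}_p$ to the corresponding Milnor basis element of the quotient when the dual monomial $m$ lies in the image of $q^*$, and to $0$ otherwise. Concretely, for $p>2$ one has $q\bigl((\xi^J\tau^{J'})^*\bigr)=(\xi^J)^*$ when $J'=0$ and $0$ when $J'\neq 0$; for $p=2$ one has $q\bigl((\zeta^K)^*\bigr)=(\zeta^K)^*$ when every entry of $K$ is even and $0$ otherwise.

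For $p>2$ the corollary then follows at once: Milnor's formula already writes $\chi(P^i)$ as the signed sum of the elements $(\xi^J)^*$ with $J\in\varUpsilon_i$, each of which has $J'=0$ and so is preserved by $q$. For $p=2$ I apply $q$ to $\chi(Sq^{2i})=\sum_{J\in\varUpsilon_{2i}}(\zeta^J)^*$; only the terms indexed by sequences $J$ with all entries even survive. Writing such a $J$ as $J=2J'$ and substituting into the defining relation \eqref{5555}, the condition $J\in\varUpsilon_{2i}$ becomes exactly $J'\in\varUpsilon_i$, so the surviving terms are precisely the $(\zeta^{2J'})^*$ with $J'\in\varUpsilon_i$, which is the desired formula. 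The main thing to get right is this last bookkeeping for $p=2$ (the matching of indexing sets under $J\mapsto 2J'$) together with the clean identification of $q$ on Milnor bases; the odd-primary case is essentially immediate once the quotient map is understood on the dual side.
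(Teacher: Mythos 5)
Your proof is correct and follows essentially the same route as the paper: both push Milnor's formula for $\chi(P^i)$ (resp.\ $\chi(Sq^{2i})$) through the quotient map $\mathcal{A}_p \to \mathcal{A}_p/(\beta_p)$, using the structure of the dual algebras to see that Milnor basis elements $(\xi^J\tau^{J'})^*$ survive exactly when $J'=0$ (resp.\ $(\zeta^J)^*$ survives exactly when $J=2J'$), and then re-index for $p=2$ via $J\mapsto 2J'$. The only difference is that you spell out the dualization/adjunction argument and the Hopf-ideal remark that the paper leaves implicit.
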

\begin{proof}
Consider the quotient map $\pi\colon \mathcal{A}_p \to \mathcal{A}_p/(\beta_p)$. Proposition \ref{structure of dual algebra} yields that for $p>2$ we have that 
	\[
	\pi\bigl( (\xi^J\tau^{J'})^*\bigr)=\left\{ \begin{aligned}
	&(\xi^J)^*  &\text{if} \  J'=0,\\
	&0  &\text{otherwise}.
	\end{aligned}
	\right.
	\] 
	And for $p=2$ we have that 
	\[
	\pi\bigl( (\zeta^J)^*\bigr)=\left\{ \begin{aligned}
	&(\zeta^J)^* & \text{if there exists} \ J'\in \varUpsilon \ \text{such that} \   J=2J' ,\\
	&0 & \text{otherwise}.
	\end{aligned}
	\right.
	\] 
Applying this to Proposition \ref{Milnor}, we get the statement of the corollary. 
\end{proof}
For a sequence $I=(x_1,x_2,\dots, x_n, 0,0, \dots)\in \varGamma$ let us denote
\[
\begin{aligned}
P^I&=P^{x_1}P^{x_2}\dots P^{x_n}  & \text{for} \  p>2, \\
Sq^I&=Sq^{x_1}Sq^{x_2}\dots Sq^{x_n}  & \text{for} \  p=2.
\end{aligned}
\]
\begin{Lem}[{\cite[Lemma 8]{Milnor}}]\label{Milnor2}
	For any $I,I'\in \varGamma$ and $p>2$ we have that 
\[
	\langle \xi^{\gamma(I')}, P^I\rangle=\left\{ 
	\begin{aligned}
	&\  0& \text{if}\  I<I',\\
	&\pm1& \text{if}\  I=I'.
	\end{aligned}
	\right.
\]
For any $I,I'\in \varGamma$ and $p=2$ we have that	
\[
	\langle \zeta^{\gamma(I')}, Sq^I\rangle=\left\{ 
	\begin{aligned}
	&0 & \text{if}\  I<I',\\
	&1 &  \text{if}\  I=I'.
	\end{aligned}
	\right.
\]

\end{Lem}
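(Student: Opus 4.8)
The plan is to read this as a \textbf{unitriangularity} statement for the change of basis between the admissible products $P^I$ (the condition $x_i\ge px_{i+1}$ defining $\varGamma$ is exactly admissibility) and the Milnor basis $\{(\xi^{\gamma(I')})^{*}\}$, and to compute the pairing by Hopf-algebra duality. I will argue for $p>2$; for $p=2$ one replaces $\xi_i,P^i$ by $\zeta_i,Sq^i$ and $p$ by $2$, and everything goes through verbatim. The starting point is Milnor's identification $P^{s}=P^{(s,0,\dots)}=(\xi_1^{s})^{*}$, i.e.\ $P^s$ is the basis element dual to $\xi_1^{s}$. Writing $I=(x_1,\dots,x_n,0,\dots)$ and $P^I=P^{x_1}\cdots P^{x_n}$, and using that the product of $\mathcal{A}_p$ is dual to the coproduct $\psi$ of $\mathcal{A}_p^{*}$, I get for $J=\gamma(I')$
\[
\langle \xi^{J},\,P^{x_1}\cdots P^{x_n}\rangle=\bigl\langle \psi^{(n-1)}\xi^{J},\ P^{x_1}\otimes\cdots\otimes P^{x_n}\bigr\rangle,
\]
where $\psi^{(n-1)}\colon\mathcal{A}_p^{*}\to(\mathcal{A}_p^{*})^{\otimes n}$ is the iterated coproduct. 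Since $\langle\xi^{K},P^{x}\rangle=\delta_{K,(x,0,\dots)}$, this pairing equals the coefficient of $\xi_1^{x_1}\otimes\cdots\otimes\xi_1^{x_n}$ in $\psi^{(n-1)}\xi^{J}$. Thus the whole problem reduces to extracting a single coefficient of the iterated coproduct of a monomial.

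I would then analyze this coefficient from $\psi\xi_k=\sum_{i+j=k}\xi_j^{\,p^i}\otimes\xi_i$. Iterating, a term of $\psi^{(n-1)}\xi_k$ is indexed by $(a_1,\dots,a_n)$ with $\sum a_\ell=k$ and has $\ell$-th slot $\xi_{a_\ell}^{\,p^{a_{\ell+1}+\cdots+a_n}}$; for it to survive into a pure $\xi_1$-tensor every $a_\ell$ must lie in $\{0,1\}$. Hence each copy of $\xi_k$ selects a $k$-element subset $S=\{s_1<\cdots<s_k\}\subseteq\{1,\dots,n\}$ and deposits $\xi_1^{\,p^{k-m}}$ into slot $s_m$. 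Expanding $\psi^{(n-1)}\xi^{J}=\prod_k(\psi^{(n-1)}\xi_k)^{j_k}$ converts the problem into a transportation problem: assign to each of the $j_k$ copies of $\xi_k$ a $k$-subset so that the total $\xi_1$-exponent landing in slot $\ell$ equals $x_\ell$, and sum the resulting products of multinomial coefficients.

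For the diagonal case $I'=I$, i.e.\ $J=\gamma(I)$ with $j_k=x_k-px_{k+1}$, I would exhibit the assignment in which every copy of $\xi_k$ takes the subset $\{1,2,\dots,k\}$. Then slot $\ell$ receives $\xi_1^{\,p^{k-\ell}}$ from each of the $j_k$ copies with $k\ge\ell$, and the total telescopes:
\[
\sum_{k\ge\ell}\bigl(x_k-px_{k+1}\bigr)\,p^{\,k-\ell}=x_\ell .
\]
Since every class of copies is forced onto a single subset, all the multinomial coefficients equal $1$ and the coefficient is $\pm1$, as required.

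The heart of the argument — and the step I expect to be the main obstacle — is the full \emph{triangularity}: showing that this diagonal assignment is the \emph{only} one realizing $J=\gamma(I)$, and that for $\gamma(I')$ with $I'>I$ \emph{no} assignment exists. The natural route is to analyze the slots from the top down. Slot $n$ can only receive $p^{0}=1$ (any subset containing $n$ has $n$ as its largest element), so the number of copies whose subset contains $n$ must equal $x_n$; as $\xi_n$ is forced onto the full subset $\{1,\dots,n\}$, this pins down the copies reaching slot $n$ and lets one peel off slot $n$ and recurse on $\{1,\dots,n-1\}$. Matching this base-$p$ ``carrying'' in the slot totals against the right-lexicographic order is exactly where the work lies: for $I'>I$ the same top-slot count forces more $\xi_n$-copies into slot $n$ than its target allows (concretely, one is driven to a binomial $\binom{a}{b}$ with $b>a$, just as $I'>I$ produced a negative exponent $x_n-y_n'<0$ in the length-two case), whence the coefficient vanishes. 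That $\gamma$ preserves the right-lexicographic order (already noted above) is what keeps this induction consistent and allows the conclusion to be phrased directly on $\varGamma$.
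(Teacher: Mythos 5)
The paper itself does not prove this lemma --- it is quoted directly from Milnor (\cite[Lemma 8]{Milnor}) --- so the only question is whether your argument is complete and correct on its own. Your setup is sound and is in fact Milnor's own method: dualizing the product of $\mathcal{A}_p$ against the coproduct of $\mathcal{A}_p^{*}$, using $\langle \xi^K, P^x\rangle=\delta_{K,(x,0,\dots)}$ to reduce the pairing to the coefficient of $\xi_1^{x_1}\otimes\cdots\otimes\xi_1^{x_n}$ in $\psi^{(n-1)}\xi^{\gamma(I')}$, the ``subset'' bookkeeping for the iterated coproduct $\psi\xi_k=\sum_{i+j=k}\xi_j^{\,p^i}\otimes\xi_i$, and the telescoping identity $\sum_{k\ge\ell}(x_k-px_{k+1})p^{\,k-\ell}=x_\ell$ exhibiting a diagonal term with coefficient $1$: all of this is correct.

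However, there is a genuine gap, and you flag it yourself: the triangularity (uniqueness of the diagonal assignment, and nonexistence of assignments when $I<I'$) is only sketched, and the sketch as stated does not suffice. Your slot-$n$ argument (``for $I'>I$ the top-slot count forces more $\xi_n$-copies into slot $n$ than its target allows'') produces a contradiction only when the right-lexicographic disagreement between $I$ and $I'$ occurs at the last index $n$: then indeed $c_n=y'_n>x_n$ while $\sum_k c_k=x_n$ is required. But if the disagreement is at an index $i<n$, then $y'_n=x_n$, the slot-$n$ constraint is satisfiable, and no contradiction appears there; instead one must peel off slot $n$: the constraint forces every copy of $\xi_n$ onto the full subset and forbids any copy of $\xi_k$, $k<n$, from containing $n$, which reduces the problem to slots $1,\dots,n-1$ with new targets $x_\ell-x_np^{\,n-\ell}$ and with the Milnor monomial indexed by $\bigl(y'_\ell-y'_np^{\,n-\ell}\bigr)_{\ell<n}$. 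To close the induction you must then check that these reduced sequences are again admissible (they are: $x_\ell\ge px_{\ell+1}$ is preserved, as is nonnegativity), that they are again related by $\gamma$, and that the right-lexicographic order relation is inherited. None of this is in your text, and without it neither the vanishing statement nor the claim that the diagonal coefficient is exactly $\pm1$ (rather than some other sum of multinomial coefficients mod $p$ coming from competing assignments) is established. Since both conclusions of the lemma rest on precisely this step, the proposal as written is an incomplete proof, though the missing argument can be supplied along the lines just indicated.
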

By Proposition $3.5$ of \cite{Steenepstein}, the set $\{ P^I: I\in \Gamma\}$ is a basis in $\mathcal{A}_p/(\beta_p)$ for $p>2$, and the set $\{Sq^{2I}: I\in \Gamma\}$ is a basis in  $\mathcal{A}_2/(\beta_2)$. These bases are called Cartan---Serre bases.   
Hereinafter, for $p=2$ we denote $Sq^{2i}$ by $P^i$ and $\zeta^{2i}$ by $\xi^{i}$. With this change in mind, all further statements hold for all primes~$p$.
\begin{Cor}\label{tech 100}
The operation $\chi(P^r)$ written in the Cartan---Serre basis contains with non-zero coefficient the basis element $P^{I^r_{\max}}$, where the sequence $I^{r}_{\max}$ is the greatest sequence in $\Gamma_r$.
\end{Cor}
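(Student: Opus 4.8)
The plan is to write the expansion of $\chi(P^r)$ in the Cartan--Serre basis as
$\chi(P^r)=\sum_{I\in\varGamma_r}c_I P^I$ in $\mathcal{A}_p/(\beta_p)$, and then to isolate the single coefficient $c_{I^r_{\max}}$ by pairing both sides against a cleverly chosen element of the dual algebra $\bigl(\mathcal{A}_p/(\beta_p)\bigr)^*$. The correct test element is $\xi^{\gamma(I^r_{\max})}$. This makes sense because $\gamma$ restricts to an order-preserving bijection $\varGamma_r\to\varUpsilon_r$, so $\gamma(I^r_{\max})$ is a genuine element of $\varUpsilon_r$, and $\xi^{\gamma(I^r_{\max})}$ is a monomial of the correct weight.

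First I would evaluate the pairing using the Milnor-basis description. By Corollary~\ref{Cor1} we have $\chi(P^r)=\pm\sum_{J\in\varUpsilon_r}(\xi^J)^\ast$ in $\mathcal{A}_p/(\beta_p)$. Since $\{(\xi^J)^\ast\}$ is by definition dual to the monomial basis $\{\xi^J\}$, i.e. $\langle\xi^J,(\xi^{J'})^\ast\rangle=\delta_{J,J'}$, pairing with $\xi^{\gamma(I^r_{\max})}$ selects exactly the one summand $J=\gamma(I^r_{\max})$, so that $\langle\xi^{\gamma(I^r_{\max})},\chi(P^r)\rangle=\pm1\neq0$.

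Next I would evaluate the same pairing through the Cartan--Serre expansion, obtaining $\langle\xi^{\gamma(I^r_{\max})},\chi(P^r)\rangle=\sum_{I\in\varGamma_r}c_I\langle\xi^{\gamma(I^r_{\max})},P^I\rangle$. This is where the triangularity supplied by Lemma~\ref{Milnor2} enters, and it is the conceptual crux of the argument. Taking $I'=I^r_{\max}$ in that lemma, and using that $I^r_{\max}$ is by definition the greatest element of $\varGamma_r$ — so every other $I\in\varGamma_r$ satisfies $I<I^r_{\max}$ — all off-diagonal terms $\langle\xi^{\gamma(I^r_{\max})},P^I\rangle$ with $I\neq I^r_{\max}$ vanish, while the diagonal term equals $\pm1$. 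Hence the sum collapses to $c_{I^r_{\max}}\cdot(\pm1)$.

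Comparing the two evaluations forces $c_{I^r_{\max}}=\pm1\neq0$, which is the assertion. No real computation beyond invoking the quoted facts is required; conceptually the whole argument is the remark that the change-of-basis matrix $\bigl(\langle\xi^{\gamma(I')},P^I\rangle\bigr)$ between the Milnor and Cartan--Serre bases is triangular for the right lexicographic order with invertible diagonal, so the top basis vector $P^{I^r_{\max}}$ must appear. The two small points to verify carefully are that $\gamma(I^r_{\max})\in\varUpsilon_r$ actually occurs in the Milnor sum (the bijectivity of $\gamma|_{\varGamma_r}$) and that the maximality of $I^r_{\max}$ combined with the order-vanishing in Lemma~\ref{Milnor2} leaves precisely one surviving diagonal term.
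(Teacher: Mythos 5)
Your proof is correct and takes essentially the same approach as the paper: both arguments combine Corollary~\ref{Cor1} with the triangularity in Lemma~\ref{Milnor2}. The paper phrases this as the change-of-basis matrix between the Milnor and Cartan--Serre bases being triangular with $\pm1$ on the diagonal, while you extract the single coefficient by pairing against $\xi^{\gamma(I^r_{\max})}$ and using maximality of $I^r_{\max}$ to kill the off-diagonal terms --- the same argument, merely organized as one pairing computation.
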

\begin{proof}
     Lemma \ref{Milnor2} implies that the transformation matrix from the Milnor basis to the Cartan---Serre basis is upper triangular with $\pm1$ on the diagonal. Therefore, in the Cartan---Serre basis the element~$\sum_{I\in \varUpsilon_r} (\xi^I)^*$ contains the basis element $P^{I^r_{\max}}$ with coefficient $\pm1$. On the other hand, Corollary \ref{Cor1} implies that $\chi(P^r)=(-1)^r\sum_{I\in \varUpsilon_r} (\xi^I)^*$.  
\end{proof}

\begin{Lem}\label{tech for combinatorial trm}
A sequence $(x_1, x_2, \dots)\in \varUpsilon_r $ is the greatest sequence in $\varUpsilon_r$ if and only if it satisfies the following two conditions. 
\begin{enumerate}
		\item $x_i\le p$ \ for all  $i$.
		\item If $x_i=p$, then $x_j=0$ \ for all $j<i$.
	\end{enumerate}
Moreover, the greatest sequence in $\varUpsilon_r$ has the minimal sum of terms among all sequences in~$\varUpsilon_r$. 
\end{Lem}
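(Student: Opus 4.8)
The plan is to identify the greatest element of $\varUpsilon_r$ with the \emph{greedy} representation of $r$ in the numeration system whose weights are $w_i := 1+p+\dots+p^{i-1} = \tfrac{p^i-1}{p-1}$, so that membership $x\in\varUpsilon_r$ reads $\sum_i x_i w_i = r$, and then to extract both the characterization (1)--(2) and the minimality of $\sum_i x_i$ from the greedy algorithm. The one structural identity I would use throughout is
\[
w_{i+1} = p\,w_i + 1, \qquad\text{equivalently}\qquad p\,w_i = w_{i+1} - w_1 .
\]
First I would show that the greatest element is forced to be greedy. Since the order is right-lexicographic, sequences are compared at the highest index of difference, so the greatest one must maximise its top entry first, then the next, and so on. Because $w_1=1$, any nonnegative residual weight can always be completed using the first coordinate, so maximising is never blocked: at the largest index $n$ with $w_n\le r$ one is forced to take $x_n=\lfloor r/w_n\rfloor$, then $x_{n-1}=\lfloor R/w_{n-1}\rfloor$ for the residual $R$, and so on down. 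Concretely, if $y\in\varUpsilon_r$ differs from the greedy sequence $x^\ast$ at a highest index $k$, then $y$ and $x^\ast$ agree above $k$, hence represent the same residual $R=r-\sum_{j>k}x^\ast_j w_j$ below $k$, and $y_k w_k\le R$ forces $y_k\le\lfloor R/w_k\rfloor=x^\ast_k$, so $x^\ast>y$. Thus the greatest element is exactly the greedy representation.

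Next I would verify that the greedy sequence satisfies (1)--(2) and that, conversely, (1)--(2) single it out. For the greedy sequence the running residual $R$ before index $j$ always satisfies $R<w_{j+1}=p\,w_j+1$, so $x_j=\lfloor R/w_j\rfloor\le p$, which is (1); and if $x_j=p$ then $R=p\,w_j$, the residual drops to $0$, and every lower entry vanishes, which is (2). For the converse I would show that (1)--(2) force the partial-sum bound $s_k:=\sum_{j\le k}x_j w_j<w_{k+1}$ for all $k$: summing $x_j w_j\le(p-1)w_j$ over an initial block and telescoping with $w_{i+1}=p\,w_i+1$ gives $s_k\le w_{k+1}-(k+1)$ when no entry equals $p$, and the single admissible entry equal to $p$ (necessarily the lowest nonzero one, by (2)) is controlled by the analogous estimate and again yields $s_k<w_{k+1}$. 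Since $s_{k-1}<w_k$ forces $x_k=\lfloor s_k/w_k\rfloor$ at every step, such a sequence must be greedy. Combining the three observations, a sequence is greatest $\iff$ it is greedy $\iff$ it satisfies (1)--(2).

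The hard part is the minimality of $\sum_i x_i$, because the sum is \emph{not} monotone along the right-lexicographic order and the minimiser need not be unique, so a global argument is required. My plan is an exchange argument driven by $p\,w_i=w_{i+1}-w_1$: if $y\in\varUpsilon_r$ is not greedy, the smallest index $k$ with $s_k\ge w_{k+1}$ satisfies $y_k\ge p$, and I perform a weight-preserving \emph{carry} replacing $p$ copies of $w_k$ together with one copy of $w_1$ by one copy of $w_{k+1}$; this raises the entry at index $k+1$, hence strictly increases the right-lexicographic rank, while changing $\sum_i x_i$ by $-p$. If the required copy of $w_1$ is absent (that is, $y_1=0$), I first \emph{break} one lower weight via $w_m=p\,w_{m-1}+w_1$ (or borrow from $w_k$ itself when all entries below $k$ vanish) to create it; the break raises the sum by $p$ and the ensuing carry lowers it by $p$, so the combined move is sum-preserving and still raises the entry at index $k+1$. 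Hence every non-greedy element admits a move that strictly increases its right-lexicographic rank without increasing the sum; since $\varUpsilon_r$ is finite with greatest element the greedy sequence, iterating terminates at the greedy sequence while never increasing $\sum_i x_i$, so the greatest sequence attains the minimal sum. I expect the main obstacle to be the bookkeeping in the case $y_1=0$: verifying that a rank-increasing, sum-non-increasing move always exists (the subcases according to whether the weight below index $k$ is zero or not) and that all entries remain nonnegative throughout.
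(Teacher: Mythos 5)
Your proof is correct, and it reaches the statement by a somewhat different organization than the paper, although the two arguments share their key mechanisms. You route everything through the greedy expansion of $r$ in the weights $w_i=1+p+\dots+p^{i-1}$, proving greatest $\iff$ greedy $\iff$ (1)--(2), and then obtain minimality of the sum by carry moves plus iteration over the finite set $\varUpsilon_r$. The paper never introduces a greedy algorithm: it proves that (1)--(2) force greatestness directly, by contradiction, via the telescoping bound $p\,w_{s+1}+(p-1)(w_{s+2}+\dots+w_{i-1})=w_i-(i-s-1)<w_i$ (the same identity $(p-1)w_j=w_{j+1}-w_j-1$ that powers your partial-sum estimates), and it derives both the converse implication and the minimality from two weight-preserving exchange moves at \emph{adjacent} indices, built on $(p+1)w_i=w_{i+1}+p\,w_{i-1}$ (applied when some $x_i>p$) and $p\,w_i+w_{i-k}=w_{i+1}+p\,w_{i-k-1}$ (applied when $x_i=p$ and a lower entry is nonzero). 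These moves strictly increase the right-lexicographic rank without increasing the sum, exactly like your carries, but because they borrow from a neighbouring index rather than from index $1$, the bookkeeping you flag as the main obstacle --- the ``break'' step manufacturing a spare unit at position $1$ when $y_1=0$, with its subcases --- simply never arises in the paper. Your subcases do close up (in particular $y_k\ge p+1$ when all entries below $k$ vanish, since then $y_kw_k\ge w_{k+1}=p\,w_k+1$), so the difference is one of economy rather than correctness: the paper's moves are leaner and its conclusion is reached by choosing an extremal element (a sequence of minimal sum) instead of iterating, while your greedy framing buys a more conceptual picture in which existence and uniqueness of the maximal sequence and the characterization (1)--(2) fall out automatically from the standard theory of greedy numeration systems.
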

\begin{proof}
Firstly, we check that if a sequence $J=(x_1,x_2,\dots)\in \varUpsilon_r$ satisfies the conditions, then $J$ is the greatest in~$\varUpsilon_r$. Let $K=(y_1,y_2,\dots)$ be a sequence in $\varUpsilon_r$. Let us prove that $J\ge K$ by contradiction. 
Assume that $K>J$. Then there exists $i$ such that $y_i>x_i$ and $y_j=x_j$ for all $j>i$. Since $J$ and $K$ both lie in $\varUpsilon_r$, then 
$$ x_1+x_2(1+p)+\dots+x_i(1+p+\dots+p^{i-1})=y_1+y_2(1+p)+\dots+y_i(1+p+\dots+p^{i-1}).$$
Since $y_i>x_i$, then we obtain that 
$$x_1+x_2(1+p)+\dots+x_{i-1}(1+p+\dots+p^{i-2})\ge (1+p+\dots+p^{i-1}).$$
Let $s$ be the minimal index such that  $x_{s+1}\ne 0$. Then $s\le i-2$ and the conditions of lemma imply that 
\begin{multline*}
		x_1+x_2(1+p)+\dots+x_{i-1}(1+p+\dots+p^{i-2})\le\\ \le p(1+p+\dots+p^s)+(p-1)(1+p+\dots+p^{s+1})+\dots+(p-1)(1+p+\dots+p^{i-2})=\\
		=1+p+\dots+p^{i-1}-(i-s-1)<1+p+\dots+p^{i-1},
\end{multline*}
which gives a contradiction. 

Now, let us prove that the greatest sequence satisfies the conditions of the lemma and has the minimal sum of terms. Let $J=(x_1, x_2, \dots)$ be a sequence in $\varUpsilon_r$. Suppose that $J$ does not satisfy the conditions. Then we can define a new sequence $J'=(x'_1, x'_2, \dots)$ in one of the following two ways. 
\begin{description}
	\item[1] If $x_i>p$ for some $i$, then: 
	\\$x'_i=x_i-(p+1)$;
	\\$x'_{i+1}=x_{i+1}+1$;
	\\$x'_{i-1}=x_{i-1}+p$;
	\\$x'_{j}=x_{j}$ for $j \not\in \{i, i+1, i-1\}$.
	\item[2] If $x_i=p$ and $x_{i-k}>0$ for some $i$ and $k>0$, then:
	\\ $x'_i=x_i-p$;
	\\ $x'_{i+1}=x_{i+1}+1$;
	\\ $x'_{i-k}=x_{i-k}-1$;
	\\$x'_{i-k-1}=x_{i-k-1}+p$;
	\\$x'_{j}=x_{j}$ for $j \not\in \{i, i+1, i-k, i-k-1\}$. 
\end{description}
It is readily verified that $J'\in \varUpsilon_r$, $J'>J$ and $|J'|\le |J|$. Thus, if $J$ is the greatest in $ \varUpsilon_r$, then $J$ satisfies the conditions of the lemma, since otherwise $J'>J$. So it remains to prove that the greatest sequence has a minimal sum of terms. Let $J$ be a sequence such that for any $\widetilde{J}>J$, we have that $|\widetilde{J}|\ge|J|$. Then~$J$ satisfies the conditions of the lemma (so $J$ is the greatest), since otherwise $J'>J$ and $|J'|\le |J|$. 
\end{proof}

For an operation $P^I$ where $I\in \varGamma$, we denote by $\ex(P^I)$ the excess of $P^I$. Recall that $\ex(P^I)$ is equal to the minimal dimension of a $\Z/p\Z$-cohomology class on which $P^I$ can act non-trivially, and we have the formula $\ex(P^I)=2|\gamma(I)|$ (see {\cite[\S 4.L, p.\,499-501]{Hatch}}). For a positive integer $r$, we define the number~$\ex(r)$ by the formula
\[
\ex(r)=\min_{I\in \varGamma_r}\{\ex(P^I)\}.
\]

\begin{Th} \label{combinatorial}
	The following statements hold. 
	\begin{enumerate}
		\item $\ex(P^{I^r_{\max}})=\ex(r)$;
		\item $\ex(r)-\ex(r-1)$ is equal to either $2$ or $-2(p-1)$.
	\end{enumerate}
\end{Th}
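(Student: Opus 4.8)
My plan is to deduce both statements from Lemma~\ref{tech for combinatorial trm} together with the formula $\ex(P^I)=2|\gamma(I)|$ and the fact that $\gamma$ is an order-preserving bijection of $\varGamma_r$ onto $\varUpsilon_r$. For statement~(1) I observe that $\ex(r)=\min_{I\in\varGamma_r}\ex(P^I)=2\min_{I\in\varGamma_r}|\gamma(I)|=2\min_{J\in\varUpsilon_r}|J|$, and by Lemma~\ref{tech for combinatorial trm} this minimum is attained at the greatest element of $\varUpsilon_r$. Since $\gamma$ preserves the order and $I^r_{\max}$ is greatest in $\varGamma_r$, the sequence $\gamma(I^r_{\max})$ is greatest in $\varUpsilon_r$ and hence realizes this minimal sum, giving $\ex(P^{I^r_{\max}})=2|\gamma(I^r_{\max})|=\ex(r)$.

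For statement~(2) I write $m(r)$ for the minimal sum of terms of a sequence in $\varUpsilon_r$, so that $\ex(r)=2m(r)$ and it suffices to prove that $m(r)-m(r-1)\in\{1,-(p-1)\}$. By Lemma~\ref{tech for combinatorial trm}, $m(r)$ is the sum of terms of the greatest sequence $J(r)\in\varUpsilon_r$, which is characterized by conditions~(1)--(2) of that lemma. I would then analyze how $J(r)$ arises from $J(r-1)=(x_1,x_2,\dots)$, using repeatedly the identity $c_{i+1}=p\,c_i+1$ for $c_i=1+p+\dots+p^{i-1}$. If no entry of $J(r-1)$ equals $p$, then $(x_1+1,x_2,x_3,\dots)$ lies in $\varUpsilon_r$ and still satisfies conditions~(1)--(2), so by the lemma it equals $J(r)$, whence $m(r)=m(r-1)+1$. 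Otherwise there is a (necessarily unique) index $i_0$ with $x_{i_0}=p$ and $x_j=0$ for $j<i_0$; here I would set the $i_0$-th entry to $0$ and raise the $(i_0+1)$-st entry by $1$. The identity $c_{i_0+1}=p\,c_{i_0}+1$ shows the new sequence again lies in $\varUpsilon_r$, and one checks it satisfies conditions~(1)--(2), so it equals $J(r)$, whence $m(r)=m(r-1)-p+1=m(r-1)-(p-1)$.

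The substance of the argument lies entirely in statement~(2), and the point requiring care is verifying that the two explicitly constructed successors genuinely satisfy the admissibility conditions~(1)--(2), so that Lemma~\ref{tech for combinatorial trm} lets me identify them with $J(r)$. In the first case this reduces to checking that the only entry that can newly equal $p$ is the first one, where condition~(2) is vacuous; in the second case one uses that all entries of $J(r-1)$ below $i_0$ vanish, so that after the modification every entry with value $p$ still has only zeros below it. Uniqueness of $i_0$ is immediate from condition~(2), and the degenerate case $r=1$ (where $J(0)$ is the zero sequence) falls under the first case. Once $m(r)-m(r-1)\in\{1,-(p-1)\}$ is established, statement~(2) follows from $\ex(r)-\ex(r-1)=2\bigl(m(r)-m(r-1)\bigr)$.
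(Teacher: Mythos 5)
Your proposal is correct and follows essentially the same route as the paper: statement~(1) via the order-preserving bijection $\gamma$ and the minimal-sum property of the greatest sequence from Lemma~\ref{tech for combinatorial trm}, and statement~(2) by identifying $\gamma(I^r_{\max})$ explicitly from $\gamma(I^{r-1}_{\max})$ in the same two cases (increment the first entry when no entry equals $p$; otherwise zero out the unique entry equal to $p$ and increment the next one). Your write-up is in fact slightly more careful than the paper's, spelling out the weight identity $c_{i+1}=pc_i+1$ and the verification of the admissibility conditions that the paper leaves implicit.
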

\begin{proof}
Note that 
\[
\ex(r)=\min_{ I\in \varGamma_r}\{\ex(P^I)\}= \min_{I\in \varGamma_r}\{2|\gamma(I)|\}=2 |\gamma(I^r_{\max})|=\ex(P^{I^r_{\max}}).
\]
The penultimate equality holds, since $\gamma$ is a bijection and preserves the order, and by Lemma \ref{tech for combinatorial trm} the sequence~$\gamma(I^r_{\max})$ has the minimal sum of terms. 
	
Now let us prove the second statement. Let 
$$
	\gamma(I^{r-1}_{\max})=(x_1,x_2,\dots,x_n,0,\dots).
$$
Then $(x_1,x_2,\dots, x_n,0, \dots)$ is the greatest sequence in $\varUpsilon_r$. So by Lemma \ref{tech for combinatorial trm} we have two cases.
	\\i) For all $i$ we have that $x_i<p$. Then Lemma \ref{tech for combinatorial trm} yields that  
	$$\gamma(I^r_{\max})=(x_1+1,x_2,\dots,x_n,0,\dots).$$
	ii) There is an $i$ such that $x_j=0$ for all $j<i$, $x_i=p$, and $x_k<p$ for all $k>i$. Then Lemma \ref{tech for combinatorial trm} implies that 
	$$\gamma(I^r_{\max})=(0,\dots,0,x_{i+1}+1,\dots,x_n,0,\dots).$$ 
	Thus, $\ex(r)-\ex(r-1)$ is equal to either $2$ (case i) or $-2(p-1)$ (case ii).
\end{proof}
In section $4$ we will need the following standard lemma. 
\begin{Lem}\label{tech for chi(P^r)}
	The operation $\chi(P^r)$ acts nontrivially on the class
	$$\underbrace{\iota\times\dots\times \iota}_{\ex(r)/2}\in H^{\ex(r)}\bigl(\underbrace{K(\Z,2)\times\dots\times K(\Z,2)}_{\ex(r)/2};\Z/p\Z\bigr),$$
	where $\iota\in H^2(K(\Z,2);\Z/p\Z)=\Z/p\Z$ is the generator. 
\end{Lem}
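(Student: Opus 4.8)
The plan is to test the operation on a product of $\ell:=\ex(r)/2$ copies of $K(\Z,2)$ by an explicit computation in the Milnor basis. Write $W=K(\Z,2)$, so that $H^*(W;\Z/p\Z)=\Z/p\Z[\iota]$ is polynomial on the degree-two generator $\iota$, and let $\iota_1,\dots,\iota_\ell\in H^2(W^{\times\ell};\Z/p\Z)$ be the pullbacks of $\iota$ along the $\ell$ projections, so that $H^*(W^{\times\ell};\Z/p\Z)=\Z/p\Z[\iota_1,\dots,\iota_\ell]$ and the class in question is $x=\iota_1\cdots\iota_\ell$. By Corollary~\ref{Cor1} we have $\chi(P^r)=(-1)^r\sum_{J\in\varUpsilon_r}(\xi^J)^*$, so it suffices to exhibit a single monomial in $\Z/p\Z[\iota_1,\dots,\iota_\ell]$ occurring with nonzero coefficient in $\chi(P^r)(x)$.

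The main computational input is the standard coaction of the dual Steenrod algebra on $H^*(W;\Z/p\Z)$. Since the integral cohomology of $W$ is torsion free, $\beta_p$ acts as zero, so the coaction factors through $\bigl(\mathcal{A}_p/(\beta_p)\bigr)^*=\Z/p\Z[\xi_1,\xi_2,\dots]$ and is given on the generator by $\nu(\iota)=\sum_{i\ge0}\xi_i\otimes\iota^{p^i}$ with $\xi_0=1$; here the action is recovered by $\theta(z)=\sum\langle\theta,\alpha\rangle z'$ whenever $\nu(z)=\sum\alpha\otimes z'$. As $\nu$ is a ring homomorphism (Cartan formula), expanding $\nu(x)=\prod_{j=1}^\ell\bigl(\sum_{i\ge0}\xi_i\otimes\iota_j^{p^i}\bigr)$ and pairing with $(\xi^J)^*$ gives, for $J=(a_1,a_2,\dots)\in\varUpsilon$, the formula $(\xi^J)^*(x)=\sum\prod_{j=1}^\ell\iota_j^{p^{i_j}}$, the sum running over all assignments $j\mapsto i_j\in\{0,1,2,\dots\}$ in which the value $k$ is taken exactly $a_k$ times for each $k\ge1$ and the value $0$ is taken $\ell-|J|$ times. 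In particular $(\xi^J)^*(x)=0$ as soon as $|J|>\ell$.

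The point connecting this with the excess is supplied by Lemma~\ref{tech for combinatorial trm} and Theorem~\ref{combinatorial}: since $\ex(r)=\ex(P^{I^r_{\max}})=2|\gamma(I^r_{\max})|$, we have $\ell=|\gamma(I^r_{\max})|$, and $\gamma(I^r_{\max})$ is the greatest, hence minimal-sum, sequence in $\varUpsilon_r$; thus $|J|\ge\ell$ for every $J\in\varUpsilon_r$, with equality for $J_{\max}:=\gamma(I^r_{\max})$. Consequently every $J\in\varUpsilon_r$ with $|J|>\ell$ contributes $0$, and for those with $|J|=\ell$ no index equals $0$. I would then exhibit the monomial $M=\prod_j\iota_j^{p^{i_j}}$ produced by $J_{\max}=(a_1,\dots,a_n,0,\dots)$ under the increasing assignment (the first $a_1$ factors get exponent $p$, the next $a_2$ factors get exponent $p^2$, and so on). Since the exponents of $M$ are powers of $p$, the monomial $M$ determines its assignment $j\mapsto i_j$, and hence its type $J$, uniquely; therefore $M$ occurs in $(\xi^J)^*(x)$ for no $J\in\varUpsilon_r$ other than $J_{\max}$, and with coefficient exactly $1$ for $J_{\max}$. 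Hence the coefficient of $M$ in $\chi(P^r)(x)$ equals $(-1)^r\ne0$, which proves the lemma.

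The only genuinely delicate points are the bookkeeping in the coaction — in particular keeping track of the factors assigned the index $0$ (i.e.\ the occurrences of $\xi_0=1$), which is what forces the inequality $|J|\le\ell$ — and the observation that a monomial with pure $p$-power exponents pins down both the sequence $J$ and the assignment realizing it. The latter is exactly what rules out cancellation among the various terms $(\xi^J)^*(x)$ with $|J|=\ell$, and it is here that the minimality of $|\gamma(I^r_{\max})|$ among $J\in\varUpsilon_r$ (Lemma~\ref{tech for combinatorial trm}) does the essential work.
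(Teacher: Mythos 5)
Your proof is correct, and it takes a genuinely different route from the paper's. The paper first rewrites $\chi(P^r)$ in the Cartan---Serre basis, $\chi(P^r)=\sum_{J\in\varUpsilon_r}\varepsilon_J P^{\gamma^{-1}(J)}$, relying on Corollary~\ref{tech 100} (and hence on the triangularity Lemma~\ref{Milnor2}) to know that $\varepsilon_{\gamma(I^r_{\max})}\ne 0$; it then lets each admissible monomial act on $\iota\times\dots\times\iota$, discards the terms whose excess exceeds $\ex(r)$, and introduces a left lexicographical order on monomials to show that the top monomial $\mathcal{M}_{\gamma(I^r_{\max})}$ cannot be cancelled. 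You instead stay in the Milnor basis supplied by Corollary~\ref{Cor1} and evaluate each $(\xi^J)^*$ on the product class directly from the coaction $\nu(\iota)=\sum_{i\ge 0}\xi_i\otimes\iota^{p^i}$ (with the paper's convention $\xi_i=\zeta_i^2$, $P^i=Sq^{2i}$ for $p=2$, this formula is uniform in $p$, and all classes involved have even degree, so no signs arise). This buys you two simplifications: the vanishing of all terms with $|J|>\ell$ is immediate from counting tensor factors, with no appeal to the excess of admissible monomials; and non-cancellation is automatic, because a monomial whose exponents are powers of $p$ remembers the assignment, hence the sequence $J$, that produced it, so no ordering argument is needed. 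The price is one extra standard input not stated in the paper---the Milnor coaction formula on $H^*\bigl(K(\Z,2);\Z/p\Z\bigr)$, which is essentially Milnor's definition of the $\xi_i$---whereas the paper's ``straightforward computation'' of $P^{\gamma^{-1}(J)}(\iota\times\dots\times\iota)$ is self-contained modulo its cited references. Both arguments consume the same combinatorics (Lemma~\ref{tech for combinatorial trm} and Theorem~\ref{combinatorial}), but note a small misattribution in your closing remark: the minimality of $\bigl|\gamma(I^r_{\max})\bigr|$ is needed only to guarantee $\ell=\bigl|\gamma(I^r_{\max})\bigr|$, i.e.\ that $J_{\max}$ contributes at all on a class with $\ell$ factors; the absence of cancellation among the $|J|=\ell$ terms is carried entirely by your pure-$p$-power-exponent observation, not by Lemma~\ref{tech for combinatorial trm}.
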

\begin{proof}
       Consider the left lexicographical order on the set of monomials in $\iota$: 
	$$\iota^{a_1}\times \iota^{a_2}\times \dots \times  \iota^{a_{\ex(r)/2}} > \iota^{b_1}\times \iota^{b_2}\times\dots\times  \iota^{b_{\ex(r)/2}},$$
	if the sequence $(a_1,a_2,\dots,a_{\ex(r)/2})$ is greater than $(b_1,b_2,\dots,b_{\ex(r)/2})$ in the left lexicographical order. 
	
	Let us write $\chi(P^r)$ in the  Cartan---Serre basis: 
	$$\chi(P^r)=\sum_{J\in \varUpsilon_r}\varepsilon_J P^{\gamma^{-1}\left(J\right)}.$$
	By the definition of $\ex(r)$ we have two cases:\\
	1) $2|J|=\ex(P^{\gamma^{-1}(J)})>\ex(r)$.\\
	Then
	$$P^{\gamma^{-1}\left(J\right)}(\underbrace{\iota\times\dots\times \iota}_{\ex(r)/2})=0,$$
	since the degree of $\underbrace{\iota\times\dots\times \iota}_{\ex(r)/2}$ is equal to $\ex(r)$.\\
	2)  $2|J|=\ex(P^{\gamma^{-1}(J)})=\ex(r)$.\\
	Let 
	$$J=(x_1,x_2,\dots,x_n,0,\dots).$$
	Let us write $P^{\gamma^{-1}\left(J\right)}(\underbrace{\iota\times\dots\times \iota}_{\ex(r)/2})$ as a sum of monomials in $\iota$. Then by straightforward computation it can be checked that this sum contains with coefficient $1$ the monomial 
	\begin{equation*}
	\mathcal{M}_{J}=\underbrace{\iota^{p^n}\times\dots\times\iota^{p^n}}_{x_n}\times \underbrace{\iota^{p^{n-1}}\times\dots\times\iota^{p^{n-1}}}_{x_{n-1}}\times\dots\times \underbrace{\iota^p\times\dots\times\iota^p}_{x_1},
	\end{equation*} 
	and $\mathcal{M}_{J}$ is the greatest among all monomials which are contained in $P^{\gamma^{-1}\left(J\right)}(\underbrace{\iota\times\dots\times \iota}_{\ex(r)/2})$ with non-zero coefficient. Now note that if $J,J'\in \varUpsilon_r$ and $J>J'$, then  $\mathcal{M}_{J}>\mathcal{M}_{J'}$. Thus, if $J$ is the greatest in $\varUpsilon_r$, then for any $J'\in \varUpsilon_r$ the class $P^{\gamma^{-1}\left(J'\right)}(\underbrace{\iota\times\dots\times \iota}_{\ex(r)/2})$ written as sum of monomials in $\iota$ does not contain the monomial $\mathcal{M}_{J}$. Since the greatest sequence in $\varUpsilon_r$ is  $\gamma(I^r_{\max})$ and $\varepsilon_{\gamma(I^r_{\max})}\ne 0$ by Corollary \ref{tech 100}, then we obtain that the class $\chi(P^r)(\underbrace{\iota\times\dots\times \iota}_{\ex(r)/2})$ written as a sum of monomials in $\iota$ contains with non-zero coefficient the monomial $\mathcal{M}_{\gamma(I^r_{\max})}$ and thus is non-zero. 
\end{proof}	
	
\begin{Lem}\label{number}
	Let $n$ be a positive integer. Then the greatest $k$ such that $n>\ex(k)+2k(p-1)$ is equal to 
	\[
	\sum_{i=1}^{\infty} \left[\frac{n-1}{2p^i}\right].
	\]
\end{Lem}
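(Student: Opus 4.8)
The plan is to set $f(k) = \ex(k) + 2k(p-1)$, so that the lemma asks for the largest $k$ with $f(k) < n$. The first move is to rewrite $f$ in a usable form. By statement (1) of Theorem~\ref{combinatorial} together with the formula $\ex(P^I) = 2|\gamma(I)|$ one has $\ex(k) = 2|\gamma(I^k_{\max})|$, and writing $\gamma(I^k_{\max}) = (x_1, x_2, \dots)$ the defining relation \eqref{5555} of $\varUpsilon_k$ gives $k = \sum_i x_i\frac{p^i-1}{p-1}$. Substituting and cancelling, I would obtain the clean identity
\[
f(k) = 2\sum_i x_i + 2(p-1)\sum_i x_i\,\frac{p^i-1}{p-1} = 2\sum_i x_i\,p^i,
\]
so that $f(k)$ is twice the value of the greatest sequence $\gamma(I^k_{\max})$ read as a base-$p$ number whose units digit vanishes.

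The second step is to observe that $f$ is non-decreasing: statement (2) of Theorem~\ref{combinatorial} gives $\ex(k) - \ex(k-1) \in \{2, -2(p-1)\}$, whence $f(k) - f(k-1) \in \{2p, 0\}$. Since $f(0) = 0 < n$, the set $\{k : f(k) < n\}$ is an initial segment of $\Z_{\ge 0}$, so it suffices to produce the number
\[
S = \sum_{i=1}^\infty \left[\frac{n-1}{2p^i}\right]
\]
and prove the two bounds $f(S) < n \le f(S+1)$; monotonicity then forces $S$ to be the largest $k$ with $f(k) < n$. I expect this collapse of the irregular sequence $\ex(k)$ into a monotone step function to be the conceptual heart of the argument; everything after it is routine base-$p$ bookkeeping, and this is where I would be most careful.

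For the final step I would pass to base $p$. Put $M = \left[\frac{n-1}{2}\right]$ and expand $M = \sum_{j\ge 0} d_j p^j$ with $0 \le d_j \le p-1$. The elementary floor identity $\left[\frac{n-1}{2p^i}\right] = \left[\frac{M}{p^i}\right]$ together with a rearrangement of the double sum gives
\[
S = \sum_{i\ge 1}\left[\frac{M}{p^i}\right] = \sum_{j\ge 1} d_j\,\frac{p^j-1}{p-1}.
\]
The right-hand side is exactly the weight \eqref{5555} of the sequence $(d_1, d_2, \dots)$; since all its entries are strictly less than $p$, Lemma~\ref{tech for combinatorial trm} identifies it as the greatest sequence in $\varUpsilon_S$, that is $\gamma(I^S_{\max}) = (d_1, d_2, \dots)$. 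Likewise $(d_1+1, d_2, d_3, \dots)$ is a valid sequence of weight $S+1$ (the only entry that could equal $p$ sits in position $1$, which has no smaller index), so by the same lemma it equals $\gamma(I^{S+1}_{\max})$. Plugging into the identity for $f$ yields $f(S) = 2\sum_{i\ge 1} d_i p^i = 2(M - d_0)$ and $f(S+1) = 2(M - d_0 + p)$. Since $0 \le d_0 \le p-1$ and $2M \le n-1$, one checks $f(S) \le 2M < n$, while $f(S+1) \ge 2M + 2 \ge n$. By the monotonicity established above, $S$ is the largest $k$ with $f(k) < n$, which is the assertion of the lemma.
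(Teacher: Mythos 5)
Your proposal is correct; every step checks out. The skeleton is the same as the paper's: you invoke statement (2) of Theorem~\ref{combinatorial} for monotonicity of $f(k)=\ex(k)+2k(p-1)$, and you derive the same central identity $f(k)=2\sum_i x_ip^i$ for $(x_i)=\gamma(I^k_{\max})$ from statement (1) and Lemma~\ref{tech for combinatorial trm}. Where you genuinely diverge is the endgame. The paper uses monotonicity to convert the problem into a \emph{counting} problem: the greatest $k$ equals the number of positive integers $r$ with $\ex(r)+2r(p-1)\le n-1$, and these $r$ are counted via the bijection with admissible digit sequences, partitioned into classes $\lambda_j$ according to whether some digit equals $p$ and where, each class contributing $\bigl[\frac{n-1}{2p^{j+1}}\bigr]$ by uniqueness of base-$p$ notation. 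You instead never count anything: you name the candidate $S=\sum_{i\ge1}\bigl[\frac{n-1}{2p^i}\bigr]$, use the Legendre-type identity $\sum_{i\ge 1}\bigl[M/p^i\bigr]=\sum_{j\ge1}d_j\frac{p^j-1}{p-1}$ (with $M=\bigl[\frac{n-1}{2}\bigr]=\sum_j d_jp^j$) to read off $\gamma(I^S_{\max})=(d_1,d_2,\dots)$ and $\gamma(I^{S+1}_{\max})=(d_1+1,d_2,\dots)$ explicitly, and then sandwich: $f(S)=2(M-d_0)<n\le 2(M-d_0+p)=f(S+1)$. The paper's route explains \emph{why} the answer is a sum of floors (it literally enumerates the admissible $r$, one class per floor term); your route makes the verification mechanical and isolates the digit-sum identity as the single source of the formula, at the small cost of having to guess the answer in advance. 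Both uses of Lemma~\ref{tech for combinatorial trm} are legitimate: the paper needs uniqueness of the constrained representation, you need the converse direction (a sequence with all digits $<p$, or a single leading digit equal to $p$, is automatically the maximum of its class). One cosmetic remark: your aside that $f(k)$ is "twice a base-$p$ number" is slightly loose, since an entry of $\gamma(I^k_{\max})$ may equal $p$; this plays no role in your actual argument, which handles that case correctly when passing from $S$ to $S+1$.
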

\begin{proof}
Statement $2$ of Theorem~\ref{combinatorial} implies that if $n>\ex(k)+2k(p-1)$, then $n>\ex(k-1)+2(k-1)(p-1)$. Thus $n>\ex(r)+2r(p-1)$ for all $r\le k$. So the greatest $k$ satisfying $n>\ex(k)+2k(p-1)$ is equal to the number of positive integers $r$ such that $n-1\ge \ex(r)+2r(p-1)$.   

Let $\gamma(I^r_{\max})=(x_1,x_2, \dots, x_m, 0, 0, \dots)$. Since $\gamma(I^r_{\max})\in \varUpsilon_r$, then
\begin{equation}\label{3}
	r=x_1+x_2(1+p)+x_3(1+p+p^2)+\dots+x_m(1+p+\dots+p^{m-1}).
\end{equation} 
Lemma \ref{tech for combinatorial trm} implies that 
\begin{enumerate}[label=\alph*)]
\item $x_i\le p$ for all $i$,
\item  if $x_i=p$, then $x_j=0$ for all $j<i$. 
\end{enumerate}
Moreover, Lemma \ref{tech for combinatorial trm} implies that the representation of $r$ in the form \eqref{3} with conditions a) and b) is unique.
	Besides, from statement $1$ of Theorem~\ref{combinatorial} it follows that $\ex(r)=2\bigl(\sum_{i}x_i\bigr)$. Therefore, we get that 
	$$\ex(r)+ 2r(p-1)=2(px_1+p^2x_2+\dots+p^mx_m).$$
	So we can rewrite inequality $n-1\ge \ex(r)+2r(p-1)$ as 
	\begin{equation} \label{2}
	\frac{n-1}{2p} \ge x_1+px_2+\dots +p^{m-1}x_m.
	\end{equation} 	
	
	Thus,  the greatest $k$ such that $n>\ex(k)+2k(p-1)$ is equal to the number of non-zero sequences~$\{x_i\}$ which satisfy the conditions a), b) and inequality~$\eqref{2}$. Let us refer to the conditions a), b) and inequality~$\eqref{2}$ altogether as to condition $(*)$. 
	
	Denote by $\lambda_0$ the number of non-zero sequences $\{x_i\}$ such that $\{x_i\}$ satisfies condition $(*)$ and  $x_i<p$ for all $i$. Denote by $\lambda_j$ the number of sequences $\{x_i\}$ such that $\{x_i\}$ satisfies condition $(*)$ and  $x_j=p$.
	
From the uniqueness of the base-$p$ positional notation of an integer, it follows that $\lambda_{0}=\bigl[\frac{n-1}{2p}\bigr]$. 

Now, let us compute $\lambda_j$ for $j>0$. Since $x_j=p$, condition b) implies that $x_k=0$ for all $k<j$ and $x_l<p$ for all $l>j$. So inequality \eqref{2} can be rewritten in the form 
	 $$\frac{n-1}{2p^{j+1}} \ge 1+ x_{j+1}+px_{j+2}+\dots +p^{m-1-j}x_m,$$
where $x_{j+s}<p$ for all $s$. Therefore, $\lambda_j=\left[\frac{n-1}{2p^{j+1}}\right]$. 
	 
Thus the number of non-zero sequences $\{x_i\}$ satisfying condition $(*)$ is equal to 
\[
\sum_{j=0}^{\infty} \lambda_j=\sum_{i=1}^{\infty} \left[\frac{n-1}{2p^i}\right]. 
\tag*{\qedhere} 
\]
\end{proof}

\section{Proof of the lower bound}
Up to a homotopy equivalence we can realize the spectrum $\bu_{(p)}(s)$ as a CW $\Omega$-spectrum. We will denote its $q^{\text{th}}$ term by $\bu_{(p)}(q,s)$. In what follows, we assume that $q>1$. So we have that 	
\[
\pi_n\bigl(\bu_{(p)}(q,s)\bigr)=\begin{cases}
\Z_{(p)}, & n=q+2i \ \  \text{with} \ \  0\le i < s; \\
0, & \text{otherwise}.
\end{cases}
\]  	

Since $\bu_{(p)}(q,s)$ is $(q-1)$-connected, we have the canonical isomorphism between  $H^{0}\bigl(\bu_{(p)}(s);\Z_{(p)}\bigr)$ and  $H^{q}\bigl(\bu_{(p)}(q,s);\Z_{(p)}\bigr)$. Denote by $u_{q,s}\in H^q\bigl(\bu_p(q,s);\Z_{(p)}\bigr)$  the image of $u_s \in  H^{0}\bigl(\bu_{(p)}(s);\Z_{(p)}\bigr)$ under this isomorphism. 

\begin{Lem}\label{cohomology of bu p(q,s)}
	There are cohomology classes $a_{s,i}\in H^{q+2i}\bigl(\bu_{(p)}(q,s);\Z_{(p)}\bigr)$ with $1 \le i < s$ such that 
	\begin{align*}
	H^*\bigl(\bu_{(p)}(q,s);\mathbb{Q}\bigr)&\cong\mathbb{Q}[u_{q,s},a_{s,1},\dots,a_{s,s-1}] \quad \  \ \text{if} \ q \  \text{is even},
	\\
	H^*\bigl(\bu_{(p)}(q,s);\mathbb{Q}\bigr)&\cong\Lambda_{\mathbb{Q}}(u_{q,s},a_{s,1},\dots,a_{s,s-1}) \quad \text{if} \ q \  \text{is odd}.
	\end{align*}
\end{Lem}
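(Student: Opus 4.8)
The plan is to compute the rational homotopy type of $\bu_{(p)}(q,s)$ and then read off the cohomology ring. The guiding observation is that the Postnikov invariants of the spectrum $\bu_{(p)}$ have finite order (as already noted in Subsection~2.1), so after rationalization all the $k$-invariants of the space $\bu_{(p)}(q,s)$ vanish and the space becomes a product of rational Eilenberg---MacLane spaces $\prod_{i=0}^{s-1} K(\Q,q+2i)$. Concretely, I would argue by induction on $s$ using the principal Postnikov fibration
$$
K(\Z_{(p)},\,q+2(s-1)) \to \bu_{(p)}(q,s) \to \bu_{(p)}(q,s-1),
$$
whose $k$-invariant lies in $H^{q+2s-1}\bigl(\bu_{(p)}(q,s-1);\Z_{(p)}\bigr)$ and is a torsion class.

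For the base case $s=1$ one has $\bu_{(p)}(q,1)=K(\Z_{(p)},q)$, whose rational cohomology is the free graded-commutative algebra on a single degree-$q$ generator $u_{q,1}$ --- polynomial if $q$ is even and exterior if $q$ is odd --- by the classical computation for Eilenberg---MacLane spaces. For the inductive step I would run the Serre spectral sequence of the fibration above with $\Q$ coefficients. The fibre has rational cohomology freely generated by a single transgressive class $a_{s,s-1}$ in degree $q+2(s-1)$, and its transgression is precisely the rationalized $k$-invariant, which is $0$ because the $k$-invariant is torsion. Hence this generator, and therefore all of the fibre cohomology (its powers in the even case, the unit and the generator in the odd case), consists of permanent cycles; the spectral sequence collapses at $E_2$, and since the base cohomology is a free graded-commutative algebra there are no multiplicative extension problems. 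This yields
$$
H^*\bigl(\bu_{(p)}(q,s);\Q\bigr) \cong H^*\bigl(\bu_{(p)}(q,s-1);\Q\bigr)\otimes H^*\bigl(K(\Z_{(p)},q+2(s-1));\Q\bigr)
$$
as algebras, and the induction produces the asserted polynomial (even $q$) or exterior (odd $q$) algebra.

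It remains to exhibit the generators as $\Z_{(p)}$-classes. Since $\Q$ is flat over $\Z_{(p)}$, one has $H^*\bigl(\bu_{(p)}(q,s);\Z_{(p)}\bigr)\otimes_{\Z_{(p)}}\Q \cong H^*\bigl(\bu_{(p)}(q,s);\Q\bigr)$, so each rational indecomposable generator in degree $q+2i$ is hit, up to a nonzero scalar, by an integral class $a_{s,i}\in H^{q+2i}\bigl(\bu_{(p)}(q,s);\Z_{(p)}\bigr)$. Together with $u_{q,s}$, the image of $u_s$, which generates in degree $q$, these classes form a set of algebraically independent rational generators and hence generate the cohomology algebra exactly as claimed.

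The main obstacle is the rational collapse of the Serre spectral sequence, i.e.\ checking that the transgressions of the fibre generators vanish rationally; this rests entirely on the finiteness of the order of the Postnikov invariants, together with the fact that looping the spectrum-level $k$-invariants down to the $q$-th space of the $\Omega$-spectrum preserves their being torsion. Once collapse is established, the identification of the ring structure and the passage to $\Z_{(p)}$-coefficient generators are routine.
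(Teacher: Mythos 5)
Your proof is correct, and it reaches the lemma by a genuinely different route than the paper. Both arguments hinge on the same input---the Postnikov invariants of the spectrum $\bu_{(p)}$ have finite order---but the paper exploits it entirely at the spectrum level: it rationalizes $\bu_{(p)}(s)$, notes that the rationalized spectrum therefore splits as $\bigvee_{i=0}^{s-1}\Sigma^{2i}K(\Q)$, and concludes that the rationalization of the space $\bu_{(p)}(q,s)$, being the $q^{\text{th}}$ term of this $\Omega$-spectrum, is homotopy equivalent to $\prod_{i=0}^{s-1}K(\Q,q+2i)$; the ring structure is then immediate from the K\"unneth formula and the classical computation for rational Eilenberg---MacLane spaces, with no induction and no spectral sequence. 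You instead work space by space, inducting on $s$ through the Postnikov fibrations and running the rational Serre spectral sequence, with the torsion $k$-invariants forcing the transgression to vanish and hence collapse at $E_2$. This buys independence from any splitting theory for rational spectra, at the price of three standard but nontrivial inputs that you should cite or justify: that the space-level $k$-invariant of $\bu_{(p)}(q,s)$ is the image of the spectrum-level invariant $k^{2s+1}$ under the natural homomorphism from spectrum cohomology to the cohomology of its $q^{\text{th}}$ space (this is exactly where the infinite-loop structure enters---$k$-invariants of general simply connected spaces need not be torsion, e.g.\ for $S^2$); that the transgression of the fibre fundamental class equals this $k$-invariant; and that a free graded-commutative $E_\infty$-term over $\Q$ admits no additive or multiplicative extension problems. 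The final step---using $H^*\bigl(\bu_{(p)}(q,s);\Q\bigr)\cong H^*\bigl(\bu_{(p)}(q,s);\Z_{(p)}\bigr)\otimes\Q$ to rescale the rational generators into $\Z_{(p)}$-classes $a_{s,i}$---is identical in both proofs.
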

\begin{proof}
 Denote by $\bu_{(0)}(s)$ the rationalization of  the spectrum $\bu_{(p)}(s)$, and by $\bu_{(0)}(q,s)$ the rationalization of the space $\bu_{(p)}(q,s)$. Since the Postnikov invariants of spectra are of finite order, then $\bu_{(0)}(s)$ is homotopy equivalent to the CW $\Omega$-spectrum $\bigvee_{i=0}^{s-1} \Sigma^{2i}K(\Q)$ with the $q^{\text{th}}$ term $\prod_{i=0}^{s-1} K(\Q, q+2i)$. On the other hand, $\bu_{(0)}(s)$ is  homotopy equivalent to a CW $\Omega$ spectrum with the $q^{\text{th}}$ term homotopy equivalent to $\bu_{(0)}(q,s)$. Thus $\bu_{(0)}(q,s)$ is homotopy equivalent to $\prod_{i=0}^{s-1} K(\Q, q+2i)$. Since 
 \[
 H^*\bigl(\bu_{(p)}(q,s);\mathbb{Q}\bigr)\cong H^*\bigl(\bu_{(0)}(q,s);\mathbb{Q}\bigr),
 \]
we obtain that 
 \begin{align*}
	H^*\bigl(\bu_{(p)}(q,s);\mathbb{Q}\bigr)&\cong\mathbb{Q}[u_{q,s},a'_{s,1},\dots,a'_{s,s-1}] \quad  \ \ \text{if} \ q \  \text{is even},
	\\
	H^*\bigl(\bu_{(p)}(q,s);\mathbb{Q}\bigr)&\cong\Lambda_{\mathbb{Q}}(u_{q,s},a'_{s,1},\dots,a'_{s,s-1}) \quad \text{if} \ q \  \text{is odd},
	\end{align*}
	where $a'_{s,i}\in H^{q+2i}\bigl(\bu_{(p)}(q,s);\Q\bigr)$ for $1\le i < s$.
 Since 
 \[
 H^*\bigl(\bu_{(p)}(q,s);\mathbb{Q}\bigr)\cong H^*\bigl(\bu_{(p)}(q,s);\Z_{(p)}\bigr)\otimes \mathbb{Q},
 \]
  we can multiply $a'_{s,i}$ by integers, so that the resulting classes $a_{s,i}$ will be defined over~$\Z_{(p)}$. 
\end{proof}
\begin{Lem}\label{action of chi(p^r)}
If $q>\ex(r)$, then  $\tilde{\beta}\chi(P^r)$ acts non-trivially on $u_{q,r(p-1)}$.
\end{Lem}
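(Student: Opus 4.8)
The plan is to split the assertion into a \emph{nonvanishing mod $p$} part and a \emph{non-liftability} part. Write $\bar u_{q,s}\in H^q(\bu_{(p)}(q,s);\Z/p\Z)$ for the reduction of $u_{q,s}$. Since $\tilde\beta$ is the integral Bockstein attached to $0\to\Z_{(p)}\to\Q\to\Q/\Z_{(p)}\to0$, one has $\tilde\beta\chi(P^r)(u_{q,s})\ne0$ if and only if the mod-$p$ class $\chi(P^r)(\bar u_{q,s})\in H^{q+2r(p-1)}(\bu_{(p)}(q,s);\Z/p\Z)$ is \emph{not} the reduction of a $\Z_{(p)}$-class. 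Moreover, because $\bu_{(p)}(2q-1,s)=\Omega\,\bu_{(p)}(2q,s)$ and the operations $\tilde\beta$ and $\chi(P^r)$ are stable (hence commute with the loop/suspension isomorphism and carry $u_{2q,s}$ to $u_{2q-1,s}$), it suffices to treat the case of even $q$.

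I would first establish that $\chi(P^r)(\bar u_{q,s})\ne0$. For even $q$, consider a product $T=\prod_{i=1}^{m}K(\Z,2)$ with two-dimensional generators $t_i$, and a monomial $M=t_1^{a_1}\cdots t_m^{a_m}$ of degree $q$. A map $g\colon T\to\bu_{(p)}(q,s)$ with $g^*u_{q,s}=M$ exists: since $\bu_{(p)}(q,s)$ is $(q-1)$-connected with $\pi_q=\Z_{(p)}$, the obstructions to lifting the map $T\to K(\Z_{(p)},q)$ classifying $M$ up the Postnikov tower are successively the partial operations $\Phi_{r'}$ applied to the current class; these have finite order, so on the torsion-free cohomology of $T$ they take only the value $0$ and the lift exists. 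By functoriality $g^*\chi(P^r)(\bar u_{q,s})=\chi(P^r)(\bar M)$, and a Cartan-type computation (the total operation $\chi(P)=\sum_r\chi(P^r)$ is multiplicative) reduces the nonvanishing of $\chi(P^r)(\bar M)$ to the combinatorics of Section~3. Because $q>\ex(r)$ there is enough room in degree $q$ to choose $M$ so that the leading term $P^{I^r_{\max}}$ of minimal excess $\ex(r)$, which occurs in $\chi(P^r)$ with nonzero coefficient by Corollary~\ref{tech 100}, acts nontrivially on $\bar M$; this is precisely the content of Lemma~\ref{tech for chi(P^r)}, and it forces $\chi(P^r)(\bar M)\ne0$, whence $\chi(P^r)(\bar u_{q,s})\ne0$. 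This step is where the hypothesis $q>\ex(r)$ enters.

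The non-liftability is the heart of the matter, and it is where the choice $s=r(p-1)$ is used. The space $\bu_{(p)}(q,s)$ is the Postnikov stage of $\bu_{(p)}$ obtained by \emph{discarding} the homotopy group in degree $q+2s=q+2r(p-1)$; the next stage $\bu_{(p)}(q,s+1)$ sits in a principal fibration with fibre $K(\Z_{(p)},q+2r(p-1))$ classified by a $k$-invariant $\kappa\in H^{q+2r(p-1)+1}(\bu_{(p)}(q,s);\Z_{(p)})$. By Maunder's theorem (Theorem~\ref{Maunder}) this $k$-invariant is represented by $\Phi_r$ evaluated on $u_{q,s}$, and statement~$3$ of Corollary~\ref{definition of Phi_r} (Buchstaber's formula) identifies, modulo the indeterminacy coming from $\Phi_{r-1}$, the class $p^{r-1}\kappa$ with $\varepsilon_r\tilde\beta P^r(u_{q,s})$. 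Thus $\kappa$ is a nonzero torsion class created exactly by the truncation at $s=r(p-1)$. To match the statement, which is phrased with $\chi(P^r)$ rather than $P^r$, I would pass to homology and invoke the duality of Theorem~\ref{duality} together with the adjunction $\langle P^r(a),y\rangle=\langle a,\chi(P^r)(y)\rangle$: pairing $\chi(P^r)(\bar u_{q,s})$ against a homology class $a$ with $\langle a,\chi(P^r)(\bar u_{q,s})\rangle\ne0$ reduces the non-liftability to the nonvanishing of the corresponding Bockstein of $P^r$ on $a$, which is governed by the same $k$-invariant $\kappa$.

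The main obstacle is the bookkeeping in this last step: one must control the indeterminacy of $\Phi_r$ (the images of $\Phi_{r-1}$) and verify that the leading, minimal-excess contribution $P^{I^r_{\max}}$ is not cancelled by decomposable terms, so that the torsion class produced by the truncation is genuinely detected by $\chi(P^r)(\bar u_{q,s})$. It is exactly here that the two numerical inputs cooperate: $q>\ex(r)$ guarantees that the minimal-excess term acts nontrivially on the $q$-dimensional class, while $s=r(p-1)$ guarantees that the homotopy group into which the relevant Bockstein would map has been removed, so that the class cannot lift to $\Z_{(p)}$. Combining the two parts yields $\tilde\beta\chi(P^r)(u_{q,s})\ne0$.
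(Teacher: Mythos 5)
Your first step (mod-$p$ nonvanishing of $\chi(P^r)$ on a test class pulled back from a product of $K(\Z,2)$'s, with the test map built by obstruction theory from the finiteness of the Postnikov invariants) is sound and runs parallel to the paper. Two caveats even there: your reduction to even $q$ goes the wrong way, since the cohomology suspension $\sigma^*$ can kill classes, so nonvanishing of $\tilde\beta\chi(P^r)(u_{2q,s})$ does not formally imply nonvanishing of $\tilde\beta\chi(P^r)(u_{2q-1,s})=\sigma^*\tilde\beta\chi(P^r)(u_{2q,s})$; and Lemma~\ref{tech for chi(P^r)} concerns the class of degree exactly $\ex(r)$, not an arbitrary monomial of degree $q>\ex(r)$. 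The genuine gap, however, is in your second step, the non-liftability, and the mechanism you propose there cannot be repaired. Theorem~\ref{duality} applies to finite spectra with vanishing rational homology in the relevant range, and $\bu_{(p)}(q,s)$ is neither finite nor rationally trivial: by Lemma~\ref{cohomology of bu p(q,s)} its rational cohomology is a free polynomial (or exterior) algebra; rational triviality is arranged only afterwards, for the fibre $R_p(q,s)$ in Lemma~\ref{538}, and the present lemma is precisely the input needed there, so invoking the duality theorem here is out of order. More fundamentally, the $k$-invariant route ties $\kappa$, via Buchstaber's formula (statement 3 of Corollary~\ref{definition of Phi_r}), to $\tilde\beta P^r(u_{q,s})$, \emph{not} to $\tilde\beta\chi(P^r)(u_{q,s})$, and these behave completely differently in the relevant range: $P^r$ has excess $2r$, so $P^r(\bar u_{q,s})=0$ identically whenever $q<2r$, whereas $\ex(r)$ can be much smaller than $2r$. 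This range actually occurs in the application: for $p=2$, $n=13$, Theorem~\ref{win} uses $q=5$, $r=4$, where $P^4=\Sq^8$ annihilates every $5$-dimensional class while $\ex(4)=4<5$. In such cases Buchstaber's formula merely says that $p^{r-1}\kappa$ lies in the indeterminacy of $\Phi_r$, which yields no contradiction and no information about $\chi(P^r)$. Your assertion that ``$\kappa$ is a nonzero torsion class'' is, moreover, never proved.

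The actual mechanism in the paper, which your proposal never touches, is multiplicative. If $\tilde\beta\chi(P^r)(u_{q,r(p-1)})=0$, then $\rho_*\chi(P^r)(u_{q,r(p-1)})=\psi_*z$ for some \emph{rational} class $z$ of degree $q+2r(p-1)$. Because $s=r(p-1)$, Lemma~\ref{cohomology of bu p(q,s)} forces $z$ to be decomposable: all rational generators sit in degrees at most $q+2s-2$. Because $q>\ex(r)$, the test map $g_i$ is defined on a \emph{genuine} suspension $\Sigma^{q-\ex(r)}\bigl(K(\Z,2)^{\ex(r)/2}\bigr)$, where all cup products vanish; hence $g_i^*(z)=0$, contradicting $\psi_*g_i^*(z)=\Sigma^{q-\ex(r)}\rho_*\chi(P^r)(w)\ne0$, which follows from Lemma~\ref{tech for chi(P^r)} and the injectivity of $\rho_*$ on the torsion-free cohomology of the test space. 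So suspending the test space, rather than using degree-$q$ monomials as you do, is not a convenience: killing decomposables is exactly what rules out a rational (hence integral) lift of $\chi(P^r)(\bar u_{q,s})$, and it is the only place where the hypothesis $s=r(p-1)$ genuinely enters. Without some use of the ring structure of $H^*\bigl(\bu_{(p)}(q,s);\Q\bigr)$, nothing in your argument prevents $\chi(P^r)(\bar u_{q,s})$ from being the reduction of a $\Z_{(p)}$-class.
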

\begin{proof}
Since the Postnikov invariants $k_q^{2m+1}\in H^{q+2m+1}\bigl(\bu_{(p)}(q,m);\Z_{(p)}\bigr)$ of $\bu_{(p)}(q,s)$ have finite order, and the integral cohomology groups of  $K(\Z,2)$ are torsion free, then the obstruction theory yields the existence of a map 
 $$
 f\colon \underbrace{K(\Z,2)\times\dots\times K(\Z,2)}_{\ex(r)/2} \to \bu_{(p)}\bigl(\ex(r),r(p-1)\bigr)
 $$
such that 
\[
f^*(u_{\ex(r),r(p-1)})=\underbrace{\iota\times\dots\times \iota}_{\ex(r)/2},
\]
where $\iota \in H^2(K(\Z,2);\Z_{(p)})$ is the standard generator. Denote the class $\underbrace{\iota\times\dots\times \iota}_{\ex(r)/2}$ by $w$. Let $i$ be a positive integer and $q=\ex(r)+i$.  Denote by $g_i$ the map
\[
 g_i\colon \Sigma^i \bigl(K(\Z,2)^{\ex(r)/2} \bigr) \xrightarrow{\Sigma^i f} \Sigma^i \bu_{(p)}\bigl(\ex(r),r(p-1)\bigr) \to \bu_{(p)}\bigl(q,r(p-1)\bigr).
 \]
 Then $g_i^*(u_{q,r(p-1)})=\Sigma^i w$. Now, let us prove that $\tilde{\beta}\chi(P^r)(u_{q,r(p-1)})$ is nonzero. We will argue by contradiction. If $\tilde{\beta}\chi(P^r)(u_{q,r(p-1)})=0$, then there exists a class 
 $$
 z\in H^{q+2r(p-1)}\Bigl(\bu_{(p)}\bigl(q,r(p-1)\bigl);\mathbb{Q}\Bigr)
 $$
such that $\psi_*z=\rho_*\chi(P^r)(u_{q,r(p-1)})$, where $\psi_*$ is the map on (co)homology groups induced by the quotient homomorphism $\psi\colon \Q \to \Q/\Z_{(p)}$ of the coefficient groups. Since $g_i^*(u_{q,r(p-1)})=\Sigma^i w$, then 
\[
\psi_* g_i^*(z)=\Sigma^i\rho_*\chi(P^r)(w).
\]
By Lemma  \ref{tech for chi(P^r)}, we have that $\chi(P^r)(w)\ne 0$. Since the cohomology groups  $H^*(K(\Z,2)^{\ex(r)/2};\Z)$ are free abelian, then $\rho_*$ is injective and thus $\psi_* g_i^*(z) \ne0$. 
 
On the other hand, by Lemma \ref{cohomology of bu p(q,s)} the cohomology ring $H^{*}\bigl(\bu_{(p)}(q,r(p-1));\mathbb{Q}\bigr)$ is multiplicatively generated by elements of degrees strictly less than $q+2r(p-1)$, but the class $z$ has degree $q+2r(p-1)$. So $z$ can be represented as a sum of decomposable elements in $H^*\bigl(\bu_{(p)}(q,r(p-1));\mathbb{Q}\bigr)$. However, since $i>0$, all the products in the cohomology ring $H^*\bigl(\Sigma^i\bigl(K(\Z,2)^{\ex(r)/2}\bigr);\Q\bigr)$ vanish. Thus, $g_i^*(z)=0$. Contradiction. 
\end{proof}

Consider a map
\[
\pi\colon \bu_{(p)}\bigl(q, s\bigr) \to \prod_{i=0}^{s-1} K(\Q, q+2i)
\]
such that $\pi^*(\iota_{q})=u_{q,s}$ and $\pi^*(\iota_{q+2i})=a_{s,i}$ for $1\le i<s$, where $\iota_n\in H^n\bigl(K(\Q),n);\Q\bigr)$ is the canonical generator. Denote by $R_p(q,s)$ the homotopy fibre of $\pi$. Let 
\begin{equation}\label{9995}
f\colon R_p(q,s) \to  \bu_{(p)}\bigl(q, s\bigr)
\end{equation}
be the map of the fibre and let $\tilde{u}_{q,s}$ denote the class $f^*(u_{q,s})$.

\begin{Lem}\label{538}
The following properties of $R_p(q,s)$ hold.
\begin{enumerate}
\item $\widetilde{H}_*\bigl(R_p(q,s);\Q\bigr)=0$.
\item The operation $\Phi_r$ is defined on $\tilde{u}_{q,r(p-1)}$.
\item If $q>\ex(r)$, then $\tilde{\beta}\chi(P^r)(\tilde{u}_{q,r(p-1)})\ne 0$.
\end{enumerate}
\end{Lem}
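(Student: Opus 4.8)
The plan is to dispatch the three assertions separately: statement (1) follows from the rational triviality of the fibre, statement (2) from the permanent-cycle property of the universal class transported along a highly connected map, and statement (3) from Lemma~\ref{action of chi(p^r)} together with a Serre spectral sequence detection argument. For statement (1), first observe that $\pi$ is a rational homotopy equivalence. By construction $\pi^*$ carries the fundamental classes $\iota_q,\iota_{q+2},\dots,\iota_{q+2(s-1)}$ of the factors to $u_{q,s},a_{s,1},\dots,a_{s,s-1}$; by Lemma~\ref{cohomology of bu p(q,s)} these are precisely the free (polynomial or exterior, according to the parity of $q$) generators of $H^*\bigl(\bu_{(p)}(q,s);\Q\bigr)$, while the $\iota$'s freely generate the rational cohomology of the product of Eilenberg--MacLane spaces. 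Hence $\pi^*$ is an isomorphism of rational cohomology rings, so $\pi$ is a rational equivalence of simply connected spaces and its homotopy fibre $R_p(q,s)$ is rationally trivial; concretely the homotopy long exact sequence shows $R_p(q,s)$ is $q$-connected with $\pi_*\bigl(R_p(q,s)\bigr)\otimes\Q=0$, so the rational Hurewicz theorem yields $\widetilde H_*\bigl(R_p(q,s);\Q\bigr)=0$.

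For statement (2) I would reduce, via functoriality of $\Phi_r$ and the relation $\tilde u_{q,s}=f^*(u_{q,s})$, to showing that $\Phi_r$ is defined on $u_{q,s}$ on $\bu_{(p)}(q,s)$ with $s=r(p-1)$; by Corollary~\ref{definition of Phi_r} this amounts to checking that $\Phi_1,\dots,\Phi_{r-1}$ all act trivially on $u_{q,s}$. The crucial input is that every $\Phi_i$ acts trivially on the universal class $u\in H^0\bigl(\bu_{(p)};\Z_{(p)}\bigr)$: indeed $u$ is a permanent cycle of the AHSS for $\K^*\otimes\Z_{(p)}$ (it is the leading term of the class in $\K^0(\bu_{(p)})\otimes\Z_{(p)}$ represented by the identity), so $\Phi_i(u)\ni 0$ by Maunder's Theorem~\ref{Maunder}. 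I would then transport this along the canonical map $c\colon\bu_{(p)}\to\bu_{(p)}(s)$, which satisfies $c^*u_s=u$ and is $2s$-connected, hence induces an isomorphism on $\Z_{(p)}$-cohomology in all degrees $\le 2s-1$. Since each $\Phi_i$ with $i\le r-1$ raises degree by $2i(p-1)+1\le 2s-1$, an induction on $i$ using naturality of the $\Phi_i$ and of their indeterminacies (the images of $\Phi_{i-1}$, Corollary~\ref{definition of Phi_r}) upgrades ``$0\in\Phi_i(u)$'' to ``$0\in\Phi_i(u_s)$''. Stability then passes this to $u_{q,s}$ and functoriality to $\tilde u_{q,s}$, whence $\Phi_r$ is defined. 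The main obstacle is precisely the bookkeeping of indeterminacies: one must verify that $c^*$ carries $\Indet(\Phi_i)$ on $\bu_{(p)}(s)$ isomorphically onto $\Indet(\Phi_i)$ on $\bu_{(p)}$, and this is exactly where the connectivity bound $2i(p-1)+1\le 2s-1$ is used.

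For statement (3) the nontriviality of $\tilde\beta\chi(P^r)$ on $u_{q,r(p-1)}$ is already supplied by Lemma~\ref{action of chi(p^r)}, so by naturality $\tilde\beta\chi(P^r)(\tilde u_{q,r(p-1)})=f^*\bigl(\tilde\beta\chi(P^r)(u_{q,r(p-1)})\bigr)$ and it remains to see that $f^*$ does not kill this class. The key point is that $\xi:=\tilde\beta\chi(P^r)(u_{q,r(p-1)})\in H^{q+2r(p-1)+1}\bigl(\bu_{(p)}(q,s);\Z_{(p)}\bigr)$ is $p$-torsion, being in the image of the Bockstein $\tilde\beta$. I would run the Serre spectral sequence of the fibration $R_p(q,s)\xrightarrow{f}\bu_{(p)}(q,s)\xrightarrow{\pi}\prod_{i=0}^{s-1}K(\Q,q+2i)$ with $\Z_{(p)}$-coefficients. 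Since the base is a rational space, $H^a$ of the base is a $\Q$-vector space for every $a\ge1$, so every entry $E_2^{a,b}$ with $a\ge1$ is a $\Q$-vector space, hence torsion-free; consequently the entire positive-filtration subgroup $F^1H^{q+2r(p-1)+1}\bigl(\bu_{(p)}(q,s);\Z_{(p)}\bigr)$ is torsion-free. Therefore the nonzero torsion class $\xi$ cannot lie in $F^1=\ker f^*$, so $f^*(\xi)\neq 0$, which proves statement (3).

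Summarizing, statements (1) and (3) are essentially formal once one knows that $\pi$ is a rational equivalence and that the relevant obstruction is torsion, the latter making it detectable on the rationally trivial fibre. The genuinely delicate part is statement (2): the argument hinges on the universal class being a permanent cycle, and the partial, multivalued nature of the operations $\Phi_i$ forces careful control of their indeterminacies, which is available only within the stable range $\deg\le 2s-1$ provided by the connectivity of $c$.
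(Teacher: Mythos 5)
Your proof of statement (1) is correct and coincides with the paper's argument ($\pi$ is a rational cohomology isomorphism, hence the fibre is rationally trivial). The problems are in (2) and (3).

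For (2) you have overlooked that the statement is essentially tautological, and your substitute plan has a real hole. By Definition~\ref{def. def of integer higher operations}, $\Phi_r$ is \emph{defined} on a class $x\in H^n(X;\Z_{(p)})$ precisely when there exists a map $g\colon X\to \bu_{(p)}\bigl(r(p-1)\bigr)\wedge \mathbb{S}^n$ with $g^*\bigl(u_{r(p-1)}\times\sigma^n\bigr)=x$. Since $\tilde u_{q,r(p-1)}=f^*(u_{q,r(p-1)})$ and $u_{q,r(p-1)}$ corresponds to $u_{r(p-1)}$ under the canonical isomorphism, the fibre inclusion \eqref{9995} (composed with the canonical map from the suspension spectrum of the $q^{\text{th}}$ space of the $\Omega$-spectrum) is itself such a map $g$; this is the paper's one-line proof. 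Your route instead tries to transport ``$0\in\Phi_i(u)$'' from $\bu_{(p)}$ to $\bu_{(p)}(s)$ along $c\colon\bu_{(p)}\to\bu_{(p)}(s)$, but functoriality runs in the opposite direction: from $c^*u_s=u$ one only gets $\Phi_i(u)\supset c^*\Phi_i(u_s)$, i.e.\ information flows from $u_s$ to $u$, not from $u$ to $u_s$. Reversing it requires precisely the nested induction you gesture at (definedness of $\Phi_{i-1}$ on preimages, and the claim that $c^*$ carries indeterminacy onto indeterminacy and cosets onto full cosets), which you assert rather than prove; in addition, Theorem~\ref{Maunder} is stated for finite spectra, so applying it to $\bu_{(p)}$ needs the paper's extension remark. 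None of this machinery is needed.

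For (3) the detection idea (the obstruction is $p$-torsion, so test it on the rationally trivial fibre) is right, but the key step is invalid. You claim $F^1H^{q+2r(p-1)+1}\bigl(\bu_{(p)}(q,s);\Z_{(p)}\bigr)$ is torsion-free because every $E_2^{a,b}$ with $a\ge1$ is a $\Q$-vector space. However, $E_\infty^{a,b}$ is a \emph{subquotient}, not a subgroup, of $E_2^{a,b}$, and a quotient of a $\Q$-vector space by a non-divisible subgroup can have torsion ($\Q/\Z_{(p)}$ is exactly such a quotient). Non-divisible images genuinely can occur here: differentials $d_k\colon E_k^{0,b}\to E_k^{k,b-k+1}$ start from subgroups of $H^b\bigl(R_p(q,s);\Z_{(p)}\bigr)$, and since $\widetilde H_*\bigl(R_p(q,s);\Z\bigr)$ is all torsion these groups are $\Ext$-groups such as $\Ext\bigl(\Q/\Z_{(p)},\Z_{(p)}\bigr)$, i.e.\ the $p$-adic integers --- torsion-free but \emph{not} divisible, and admitting nonzero homomorphisms into $\Q$-vector spaces. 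So torsion-freeness of the positive columns of $E_2$ does not pass to $E_\infty$, and your conclusion that $\ker f^*$ contains no torsion is unsupported. The paper circumvents this by changing coefficients: with $\Z/p^m\Z$-coefficients the reduced cohomology of $K(\Q,n)$ vanishes, so the Serre spectral sequence of the fibration $\prod_{i=0}^{r(p-1)-1}K(\Q,q+2i-1)\to R_p\bigl(q,r(p-1)\bigr)\xrightarrow{f}\bu_{(p)}\bigl(q,r(p-1)\bigr)$ collapses and $f^*$ is an isomorphism on $H^*(-;\Z/p^m\Z)$ for every $m$; one then picks $m$ so that $\tilde\beta\chi(P^r)(u_{q,r(p-1)})$ is nonzero modulo $p^m$ (possible because this class has order $p$ and the cohomology is a finitely generated $\Z_{(p)}$-module) and detects its image that way. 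Replacing your torsion-freeness claim by this coefficient change repairs statement (3).
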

\begin{proof}
Lemma \ref{cohomology of bu p(q,s)} and the definition of $\pi$ yields  that $\pi$ induces an isomorphism of rational cohomology groups. Thus $\widetilde{H}_*\bigl(R_p(q,s);\Q\bigr)=0$.

For a map $f$ in Definition \ref{def. def of integer higher operations}, one can take the map \eqref{9995}. So $\Phi_r$ is defined on $\tilde{u}_{q,r(p-1)}$.

Finally, let us prove the third statement. It is well known that 
\[
\widetilde{H}^*\bigl(K(\Q,n);\Z/p^m\Z\bigr)=0
\]
for any positive integers $n$ and $m$. So for any $m$ we have that 
$$
\widetilde{H}^*\Biggl(\prod_{i=0}^{r(p-1)-1} K(\Q, q+2i);\Z/p^m\Z\Biggr)=0.
$$
Therefore, the Serre spectral sequence for the fibration  
\[
\prod\limits_{i=0}^{r(p-1)-1} K(\Q, q+2i-1) \longrightarrow R_{p}\bigl(q,r(p-1)\bigr) \xlongrightarrow{f} \bu_{(p)}\bigl(q,r(p-1)\bigr)
\]
implies that for any $m$ the map
$$
f^*\colon H^*\Bigl(\bu_{(p)}\bigr(q, r(p-1)\bigl);\Z/p^m\Z\Bigr) \to H^*\Bigl(R_p\bigl(q,r(p-1)\bigr);\Z/p^m\Z\Bigr)
$$
is an isomorphism. 

Since $q>\ex(r)$, then Lemma \ref{action of chi(p^r)} implies that $\tilde{\beta}\chi(P^r)(u_{q,r(p-1)})\ne 0$. Note that $\tilde{\beta}\chi(P^r)(u_{q,r(p-1)})$ has order $p$, and the cohomology groups $H^*(\bu_{(p)}\bigl(q,r(p-1)\bigr);\Z_{(p)})$ are finitely generated $\Z_{(p)}$-modules. So there exists $m>0$ such that the reduction of $\tilde{\beta}\chi(P^r)(u_{q,r(p-1)})$ modulo $p^m$ is nonzero. Consider the commutative diagram 
\[
\xymatrix{
H^{q+2r(p-1)+1}\Bigl(\bu_{(p)}\bigl(q,r(p-1)\bigr); \Z_{(p)}\Bigr) \ar[rr]^-{\bmod p^m} \ar[d]^{f^*} && H^{q+2r(p-1)+1}\Bigl(\bu_{(p)}\bigl(q,r(p-1)\bigr); \Z/p^m\Z\Bigr) \ar[d]^{f^*}  \\
H^{q+2r(p-1)+1}\Bigl(R_p\bigl(q,r(p-1)\bigr); \Z_{(p)}\Bigr) \ar[rr]^-{\bmod p^m} && H^{q+2r(p-1)+1}\Bigl(R_p\bigl(q,r(p-1)\bigr); \Z/p^m\Z\Bigr).
}
\]
Since the right vertical map is an isomorphism and $\tilde{\beta}\chi(P^r)(u_{q,r(p-1)})$ modulo $p^m$ is nonzero, then 
\[
0\ne f^*\bigl(\tilde{\beta}\chi(P^r)(u_{q,r(p-1)})\bigr)=\tilde{\beta}\chi(P^r)(\tilde{u}_{q,r(p-1)}).
\tag*{\qedhere}
\]
\end{proof}
Denote by $R(n)$ the space 
$$
 \bigvee_{p \  \text{prime}}R_p\Biggr(n-2\Bigr(\sum_{i=1}^{\infty} \left[\frac{n-1}{2p^i}\right]\Bigl)(p-1),\Bigl(\sum_{i=1}^{\infty} \left[\frac{n-1}{2p^i}\right]\Bigr)(p-1)\Biggl),
$$
where for $q\le0$ we put $R_p(q,s)=\pt$.

\begin{Th}\label{win}
There is a homology class $x\in H_n\bigl(R(n);\Z\bigr)$ such that the minimal positive integer $A$ for which $Ax$ is $\U$-realizable is equal to 
$$
\left[\frac{n-1}{2}\right]\hskip-1mm \mathord{\vcenter{\hbox{\LARGE\textup !}}} \ = \prod_{p\ \text{prime}}p^{\sum_{i=1}^{\infty} \left[\frac{n-1}{2p^i}\right]}.
$$
\end{Th}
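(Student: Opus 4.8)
The plan is to realize $x$ as a sum of classes supported in the separate wedge summands and to compute the $p$-adic valuation of the optimal multiplier one prime at a time. Write $N_p=\sum_{i\ge1}\bigl[\frac{n-1}{2p^i}\bigr]$, $q_p=n-2N_p(p-1)$ and $s_p=N_p(p-1)$, so that the nontrivial summands of $R(n)$ are exactly the spaces $R_p:=R_p(q_p,s_p)$ with $q_p>0$ (finitely many, namely $p\le\frac{n-1}{2}$). By Legendre's formula $\nu_p\bigl(\bigl[\frac{n-1}{2}\bigr]!\bigr)=N_p$, so it suffices to produce $x$ whose optimal multiplier has $p$-adic valuation exactly $N_p$ for every prime $p$. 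I will get the lower bound $\nu_p(A)\ge N_p$ from Theorem~\ref{Lem for example} and the matching upper bound from Buchstaber's theorem (quoted in the introduction) applied to the highly connected space $R_p$.

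\emph{Construction and lower bound.} For each relevant $p$ I apply Theorem~\ref{Lem for example} to $R_p$ with $r=N_p$ and the class $\tilde u_{q_p,s_p}\in H^{q_p}(R_p;\Z_{(p)})$ (the role of ``$n$'' in that theorem is played by $q_p$, and the resulting homology class sits in degree $q_p+2N_p(p-1)=n$). Its hypotheses hold: (1) is Lemma~\ref{538}(1); for (2), Lemma~\ref{538}(2) gives that $\Phi_{N_p}$ is defined on $\tilde u_{q_p,s_p}$ since $s_p=N_p(p-1)$, while Lemma~\ref{number} yields $q_p>\ex(N_p)$, whence Lemma~\ref{538}(3) gives $\tilde{\beta}\chi(P^{N_p})(\tilde u_{q_p,s_p})\ne0$. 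Theorem~\ref{Lem for example} then furnishes $a_p\in H_n(R_p;\Z)$ with $\langle a_p,\chi(P^{N_p})(\tilde u_{q_p,s_p})\rangle\ne0$ such that $m\,p^{N_p-1}a_p$ is not $\U$-realizable for every $m$ coprime to $p$. Because a prime-to-$p$ torsion class pairs trivially with a $\bmod\,p$ cohomology class, I may replace $a_p$ by its $p$-primary component without destroying the pairing; thus I take $a_p$ to be $p$-primary. Put $x=\sum_p(\iota_p)_*a_p$, where $\iota_p\colon R_p\hookrightarrow R(n)$ is the inclusion. If $Ax$ is $\U$-realizable by $f\colon M\to R(n)$, then composing with the collapsing retraction $\rho_p\colon R(n)\to R_p$ realizes $(\rho_p)_*(Ax)=A a_p$; were $\nu_p(A)=j\le N_p-1$, then $p^{N_p-1-j}\cdot A a_p=(A/p^{j})\,p^{N_p-1}a_p$ would be $\U$-realizable (a multiple of the realizable $Aa_p$), contradicting Theorem~\ref{Lem for example} since $A/p^{j}$ is coprime to $p$. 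Hence $\nu_p(A)\ge N_p$ for all $p$, i.e.\ $\bigl[\frac{n-1}{2}\bigr]!\mid A$.

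\emph{Upper bound.} From the defining fibration $R_p\to\bu_{(p)}(q_p,s_p)\to\prod_{i=0}^{s_p-1}K(\Q,q_p+2i)$, together with the fact that the base map is a rational equivalence, one reads off (via the long exact sequence) that $R_p$ is exactly $(q_p-2)$-connected, its first nonzero homotopy group being $\pi_{q_p-1}(R_p)\cong\Q/\Z_{(p)}$. Viewing $R_p$ as a $(q-1)$-connected space with $q=q_p-1$, the class $a_p$ lies in degree $n=q+n''$ with $n''=2N_p(p-1)+1$. For any $(q-1)$-connected space all homology groups $H_{n-2r(\ell-1)-1}$ with $2r(\ell-1)+1>n''$ sit in degrees $<q$ and hence vanish, so the torsion hypothesis of Buchstaber's theorem is automatic; that theorem makes $a_p$ $\U$-realizable after multiplication by $k^s_{\U}(n'')=\prod_\ell \ell^{\left[\frac{n''-1}{2(\ell-1)}\right]}$, whose $p$-part has exponent $\bigl[\frac{2N_p(p-1)}{2(p-1)}\bigr]=N_p$. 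As $a_p$ is $p$-primary and $\nu_p\bigl(k^s_{\U}(n'')\bigr)=N_p$, multiplication by the prime-to-$p$ part of $k^s_{\U}(n'')$ is invertible on $\langle a_p\rangle$, so $p^{N_p}a_p$, and hence $\bigl[\frac{n-1}{2}\bigr]!\,a_p$, is $\U$-realizable. Taking the disjoint union of the realizing manifolds (each composed with $\iota_p$) realizes $\sum_p\bigl[\frac{n-1}{2}\bigr]!\,a_p=\bigl[\frac{n-1}{2}\bigr]!\,x$. Combined with the lower bound, this shows the optimal multiplier equals $\bigl[\frac{n-1}{2}\bigr]!$.

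The crux is twofold. First, one must pin down the connectivity of $R_p$ precisely: it is $(q_p-2)$- and not $(q_p-1)$-connected, and this single-degree shift is exactly what turns the Brumfiel--Buchstaber exponent $\bigl[\frac{n''-1}{2(p-1)}\bigr]$ into $N_p$ rather than $N_p-1$ (greater connectivity would make the upper bound contradict the lower bound). Second, reconciling the lower bound (``not realizable below $p^{N_p}$'') with the upper bound (``realizable at $p^{N_p}$'') forces a $p$-primary representative $a_p$, so that the prime-to-$p$ factors appearing in $k^s_{\U}(n'')$ can be cancelled; dealing with the prime-to-$p$ torsion that $R_p$ carries through the $\Q/\Z_{(p)}$ in its homotopy is the one genuinely delicate point.
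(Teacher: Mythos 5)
Your proposal is correct, and its construction of the class and the whole lower-bound argument (Theorem~\ref{Lem for example} applied to $R_p$ with $r=N_p$ and the class $\tilde u_{q_p,s_p}$, whose hypotheses are checked exactly via Lemmas~\ref{538} and~\ref{number}, followed by pushing forward along the retractions onto the wedge summands) coincides with the paper's. Where you genuinely diverge is the exactness step. The paper never proves that $\left[\frac{n-1}{2}\right]!\,v$ is $\U$-realizable for its constructed class $v=\sum_p v_p$: it only observes that the minimal multiplier $A_v$ is finite (by Thom, or by its own Theorem~\ref{upper boundry for realization}) and divisible by $\left[\frac{n-1}{2}\right]!$, and then rescales, replacing $v$ by $\bigl(A_v\big/\left[\frac{n-1}{2}\right]!\bigr)v$ --- this is the content of the terse phrase ``multiplying $v$ by a certain integer''. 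You instead prove the sharp upper bound for your explicit class, by computing that $R_p$ is exactly $(q_p-2)$-connected with $\pi_{q_p-1}(R_p)\cong\Q/\Z_{(p)}$, feeding this connectivity into Buchstaber's realization theorem with parameter $n''=2N_p(p-1)+1$ (whose $p$-exponent is exactly $N_p$), and cancelling the prime-to-$p$ factors using $p$-primarity of $a_p$. Both routes are valid; yours yields strictly more (an explicit class whose minimal multiplier is exactly $\left[\frac{n-1}{2}\right]!$, rather than an unspecified rescaling), but at the cost of invoking Buchstaber's theorem, which the paper quotes as background and never uses in its proofs, and of applying it to a space that is not of finite type --- the quoted statement carries no finiteness hypothesis and the underlying AHSS argument does go through for arbitrary CW complexes, but this dependence deserves to be flagged, since the reduction to finite subcomplexes used elsewhere in the paper would not preserve Buchstaber's torsion hypothesis. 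One small correction: the ``genuinely delicate point'' you identify is vacuous, because $\Q/\Z_{(p)}\cong\Z(p^{\infty})$ is the Pr\"ufer $p$-group, so all homotopy groups of $R_p$, and hence (by mod-$\mathcal{C}$ theory, $R_p$ being simply connected) all of its positive-degree integral homology, are automatically $p$-primary; your workaround of passing to the $p$-primary component is nevertheless harmless and correct.
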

\begin{proof}
Denote by $k_p(n)$ the number $\sum_{i=1}^{\infty} \left[\frac{n-1}{2p^i}\right]$. Lemma \ref{number} implies the inequality 
\[
n-2k_p(n)(p-1)>\ex\bigl(k_p(n)\bigr).
\]
So by Lemma \ref{538}, the space $R_p\bigl(n-2k_p(n)(p-1),k_p(n)(p-1)\bigr)$ satisfies the conditions of Lemma~\ref{Lem for example}. Then by Lemma~\ref{Lem for example} there is a class 
$$
	v_p\in H_n\Bigl(R_p\bigl(n-2k_p(n)(p-1),k_p(n)(p-1)\bigr);\Z\Bigr)
$$
such that $mp^{k_p(n)-1}v_p$ is not $\U$-realizable for any $m$ coprime with $p$. Put 
 $$
	 v=\sum_{p \  \text{prime}} v_p\in H_n\bigl(R(n);\Z\bigr).
 $$ 
 Then the minimal $A$ such that the class $Av$ is $\U$-realizable is divisible by $\prod_{p} p^{k_p(n)}$. Multiplying $v$ by a certain integer, we obtain the desired homology class. 
\end{proof}

\section{Proof of the upper bound and computations in low dimensions}

In this section we prove the upper bound from Theorem~\ref{Main theorem intro} and the following theorem.
\begin{Th}\label{exact boundary for small dimensinal classes}	
	If $n<2p^2+2p$, then 
	\[
	\nu_p\bigl(k_{\U}(n)\bigr)=\sum_{i=1}^{\infty}  \left[\frac{n-1}{2p^i}\right].
	\]
\end{Th}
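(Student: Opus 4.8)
The lower bound $\nu_p\bigl(k_{\U}(n)\bigr)\ge \sum_{i=1}^{\infty}\bigl[\tfrac{n-1}{2p^i}\bigr]$ is already furnished by Theorem~\ref{win} (it is a special case of the construction underlying Theorem~\ref{Main theorem intro}), so the whole content of the statement is the matching upper bound $\nu_p\bigl(k_{\U}(n)\bigr)\le \sum_{i=1}^{\infty}\bigl[\tfrac{n-1}{2p^i}\bigr]$ under the hypothesis $n<2p^2+2p$. My plan is to prove it by a direct analysis of the Atiyah--Hirzebruch spectral sequence for $\MU\otimes\Z_{(p)}$. Passing to Spanier--Whitehead duals exactly as in subsection $2.3$, $\U$-realizability of a class $a\in H_n(X;\Z)$ is equivalent to the survival of the dual bottom-row class in this spectral sequence; since $\MU_t\otimes\Z_{(p)}$ vanishes in negative degrees, nothing can hit the bottom row, so a bottom-row class survives precisely when every differential out of it vanishes. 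Using the $p$-local splitting of $\MU\otimes\Z_{(p)}$ into suspensions of the Brown--Peterson spectrum $\mathrm{BP}$, with $\mathrm{BP}_*=\Z_{(p)}[v_1,v_2,\dots]$ and $\deg v_i=2(p^i-1)$, the only potentially non-zero differentials are the $d_{2r(p-1)+1}$ (the $\MU$-analogue of Theorem~\ref{Buch.1}). The goal is therefore to show that $p^{N}a$ is annihilated by all of them, where $N=\sum_{i=1}^{\infty}\bigl[\tfrac{n-1}{2p^i}\bigr]$.

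The decisive feature of the range $n<2p^2+2p$ is that it restricts which coefficient monomials can occur. The differential $d_{2r(p-1)+1}$ lands in coefficient degree $2r(p-1)$, where $\mathrm{BP}_{2r(p-1)}$ is spanned by the monomials $v_1^{a_1}v_2^{a_2}\cdots$ of that degree. For $r\le p$ the only one is $v_1^{\,r}$; at $r=p+1$ one gets exactly $v_1^{\,p+1}$ together with the new generator $v_2$; and the first mixed monomial $v_1v_2$, of degree $2p^2+2p-4$, forces $r=p+2$ and, as I will note below, only switches on past the boundary of our range. Thus the obstructions here are organised into the pure $v_1$-tower $v_1,v_1^2,\dots$ and a single $v_2$-contribution, with all products and all higher generators $v_i$ ($i\ge3$) excluded. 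Next I would record, straight from the proof of Lemma~\ref{number} together with the monotonicity $\ex(r)+2r(p-1)>\ex(r-1)+2(r-1)(p-1)$ implied by Theorem~\ref{combinatorial}, that the set of indices $r$ with $n>\ex(r)+2r(p-1)$ is an initial segment $\{1,2,\dots,R\}$ with $R=\sum_{i=1}^{\infty}\bigl[\tfrac{n-1}{2p^i}\bigr]$. I will call these differentials \emph{active}.

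The remaining task is to show that $p^{R}a$ kills every $d_{2r(p-1)+1}$. The component of the differential along $v_1^{\,r}$ is detected by the map $\MU\to\bu$ and is exactly the $\K$-theoretic differential, so Buchstaber's formula (Theorem~\ref{Buch.2}) applies: $d_{2r(p-1)+1}(p^{r-1}a)=\varepsilon_r\tilde\beta P^r(a)$ modulo earlier differentials, a class of order $p$, whence $p^{R}a$ (with $R\ge r$ for active $r$) annihilates it; for the inactive indices $r>R$ the same component vanishes with no extra factor, by the excess estimates of Section~$3$ (the identity $\ex(P^{I^r_{\max}})=\ex(r)$ and the duality, phrased through the pairing $\langle P^r(a),y\rangle=\langle a,\chi(P^r)(y)\rangle$ of Theorem~\ref{Lem for example}, which shows the obstruction is controlled by $\chi(P^r)$ of excess $\ex(r)$). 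The single extra component, along $v_2$ at $r=p+1$, is the Milnor primitive $Q_2$; being a primary operation of order $p$, it is killed by any power $p^{\ge1}$, and one checks that it first becomes active exactly at $n>\ex(p+1)+2(p+1)(p-1)=2p^2$, i.e. precisely when $\bigl[\tfrac{n-1}{2p^2}\bigr]=1$ and $R\ge p+1$. Summing over the active differentials gives the total power $\bigl[\tfrac{n-1}{2p}\bigr]+\bigl[\tfrac{n-1}{2p^2}\bigr]=\sum_{i=1}^{\infty}\bigl[\tfrac{n-1}{2p^i}\bigr]$, so $p^{R}a$ survives and the upper bound follows.

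The main obstacle is the faithful passage from the clean $\K$-theoretic formula to the genuine $\MU$-differentials. Theorem~\ref{Buch.2} controls only the $v_1$-image of $d_{2r(p-1)+1}$, whereas for general $r$ the coefficient group $\mathrm{BP}_{2r(p-1)}$ carries many independent monomials, each an a priori separate obstruction with its own indeterminacy inherited from the earlier $v_1$-differentials. The hard part is therefore to show, intrinsically for $\MU$, that within the stated range the only surviving extra component is the primary $Q_2$ at $r=p+1$ and that it costs no factor of $p$ beyond the $p^{R}$ already in force, and to verify the excess-vanishing of the inactive differentials at the level of these higher operations rather than of $\tilde\beta P^r$ alone. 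It is exactly the restriction $n<2p^2+2p$ — the dimension at which the mixed obstruction $v_1v_2$ would first appear and break the count — that makes this control possible, which is why the exact value of $\nu_p\bigl(k_{\U}(n)\bigr)$ is obtained only in this regime.
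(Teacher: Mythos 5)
Your overall strategy (lower bound from Theorem~\ref{win}, upper bound by showing that $p^{R}x$ with $R=\sum_{i\ge1}\bigl[\frac{n-1}{2p^i}\bigr]$ survives the AHSS for $\MU_*\otimes\Z_{(p)}$) and your count of ``active'' differentials agree with the paper, but the proposal defers --- explicitly, in its last paragraph --- exactly the step that constitutes the proof, and the substitute claims offered in its place do not hold up. Your framework leaves each $\mathrm{BP}$-monomial component of each differential as a separate higher operation with indeterminacy, and you control only the $v_1^{r}$-component (via Theorem~\ref{Buch.2}); the identification of the $v_2$-component at $r=p+1$ with the Milnor primitive $Q_2$ is asserted, not proved, and your excess argument for the ``inactive'' differentials with $r>R+1$ cannot even be formulated, because Buchstaber's formula computes $d_{2r(p-1)+1}$ only on classes of the form $p^{r-1}x$, which $p^{R}a$ is not when $r-1>R$. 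The paper's key idea, absent from your proposal, is Lemma~\ref{differentials for cobordisms}: by functoriality from the universal example $K(\Z_{(p)})$ --- where every positive-degree integral cohomology class is itself a \emph{primary} integral Steenrod operation, of order $p$ --- \emph{every} component of $d_{2r(p-1)+1}(p^{r-1}x)$, along every basis element $b_{r(p-1),i}$ and not just along $v_1^{r}$, equals $\theta_{r,i}(x)$ for a primary integral Steenrod operation $\theta_{r,i}$, modulo earlier images. This collapses your ``hard part'' at once and reduces the upper bound to finite computations in $\mathcal{A}_p$ (Lemma~\ref{tech for small n}), carried out via the pairing $\langle\theta_p(x),y\rangle=\langle x,\chi(\theta_p)(y)\rangle$, Lemma~\ref{7895}, and excess estimates.

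Two concrete steps in your outline fail as stated. First, the $r=p+2$ differential, of degree $2(p+2)(p-1)+1=2p^2+2p-3$, is \emph{not} switched off inside the range: for $2p^2+2p-3\le n<2p^2+2p$ it maps into a group built from $\Omega^{\U}_{2(p+2)(p-1)}\otimes\Z_{(p)}$ (which contains $v_1^{p+2}$ and $v_1v_2$), and killing it on $p^{p+1}x$ is precisely the content of statement~2 of Lemma~\ref{tech for small n} --- a genuine Steenrod-algebra computation (for $p=2$ one must dispose of $\Sq^8$, $\Sq^7\Sq^1$, $\Sq^6\Sq^2$, $\Sq^5\Sq^2\Sq^1$), not a consequence of the degree of $v_1v_2$; your excess argument covers only the $v_1^{p+2}$-component. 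Second, the edge differentials $d^{n-1}_{n,0}$ and $d^{n}_{n,0}$ are not excluded by dimension (for instance, when $n=2p^2$ the differential following the critical one is exactly $d^{n-1}_{n,0}$); the paper needs the separate Lemma~\ref{t6} and the torsion-freeness of the zero column to dispose of them, and your proposal does not address them at all.
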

Clearly, Theorem~\ref{exact boundary for small dimensinal classes} yields the following corollary.
\begin{Cor}\label{low dim}
If $n<12$, then
	$$k_{\U}(n)=\prod_{p\ \text{prime}} p^{ \sum_{i=1}^{\infty} \left[\frac{n-1}{2p^i}\right]}=\left[\frac{n-1}{2}\right]\hskip-1mm \mathord{\vcenter{\hbox{\LARGE\textup !}}} .$$
	If $n<24$, then
	$$k_{\SO}(n)=\prod_{p\ \text{prime},\ p>2} p^{ \sum_{i=1}^{\infty} \left[\frac{n-1}{2p^i}\right]}.$$
\end{Cor}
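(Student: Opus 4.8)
The plan is to deduce Corollary~\ref{low dim} directly from Theorem~\ref{exact boundary for small dimensinal classes}, combined with two elementary ingredients: Legendre's formula for the $p$-adic valuation of a factorial, and the relation recalled in the introduction (following Novikov and Brown--Peterson) that $k_{\SO}(n)$ is the maximal odd divisor of $k_{\U}(n)$. First I would record the purely number-theoretic identity
\[
\prod_{p\ \text{prime}} p^{\sum_{i=1}^{\infty}\left[\frac{n-1}{2p^i}\right]}=\left[\frac{n-1}{2}\right]!,
\]
valid for every $n$: writing $m=\left[\frac{n-1}{2}\right]$ and using $\bigl[\left[\tfrac{n-1}{2}\right]/p^i\bigr]=\left[\tfrac{n-1}{2p^i}\right]$, Legendre's formula gives $\nu_p\bigl(m!\bigr)=\sum_{i}\left[m/p^i\right]=\sum_i\left[\frac{n-1}{2p^i}\right]$. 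Thus both displayed equalities of the corollary reduce to checking that $\nu_p\bigl(k_{\U}(n)\bigr)$ (respectively $\nu_p\bigl(k_{\SO}(n)\bigr)$) equals $\sum_i\left[\frac{n-1}{2p^i}\right]$ for the relevant primes and dimensions, after which one multiplies over all $p$.

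For the first assertion, take $n<12$. For $p=2$ one has $2p^2+2p=12$, so the hypothesis $n<2p^2+2p$ of Theorem~\ref{exact boundary for small dimensinal classes} holds precisely because $n<12$; for every odd prime $p$ one has $2p^2+2p\ge 24>12>n$, so the hypothesis again holds. Hence Theorem~\ref{exact boundary for small dimensinal classes} applies for every prime $p$ and yields $\nu_p\bigl(k_{\U}(n)\bigr)=\sum_i\left[\frac{n-1}{2p^i}\right]$ (which vanishes for $p>(n-1)/2$, making the product finite). Combining with the identity above gives $k_{\U}(n)=\left[\frac{n-1}{2}\right]!$.

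For the second assertion, take $n<24$ and use that $k_{\SO}(n)$ is the maximal odd divisor of $k_{\U}(n)$, so $\nu_2\bigl(k_{\SO}(n)\bigr)=0$ and $\nu_p\bigl(k_{\SO}(n)\bigr)=\nu_p\bigl(k_{\U}(n)\bigr)$ for every odd prime $p$. For an odd prime one checks $n<24\le 2p^2+2p$: for $p=3$ this reads $n<24=2p^2+2p$, and for $p\ge5$ one has $2p^2+2p\ge60$. Thus Theorem~\ref{exact boundary for small dimensinal classes} applies for every odd prime and gives $\nu_p\bigl(k_{\SO}(n)\bigr)=\sum_i\left[\frac{n-1}{2p^i}\right]$, whence $k_{\SO}(n)=\prod_{p>2}p^{\sum_i\left[\frac{n-1}{2p^i}\right]}$.

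There is no serious obstacle once Theorem~\ref{exact boundary for small dimensinal classes} is granted; the only point requiring care is the strictness of the inequality $n<2p^2+2p$. It is exactly this strictness that forces the cutoff $n<12$ at the prime $2$ and the cutoff $n<24$ at the prime $3$, and it likewise explains why the method leaves $\nu_3\bigl(k_{\SO}(24)\bigr)$ undetermined: at $n=24$, $p=3$ the hypothesis $n<2p^2+2p$ fails, so only the general bounds of Theorem~\ref{Main theorem intro} are available, pinning $\nu_3\bigl(k_{\SO}(24)\bigr)$ between $\sum_i\left[\frac{23}{2\cdot 3^i}\right]=4$ and $\left[\frac{21}{4}\right]=5$.
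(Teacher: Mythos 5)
Your proposal is correct and matches the paper's intended argument: the paper derives Corollary~\ref{low dim} from Theorem~\ref{exact boundary for small dimensinal classes} with no further comment (``Clearly\dots''), and your write-up simply supplies the details --- Legendre's formula, the threshold checks $2p^2+2p=12$ for $p=2$ and $24$ for $p=3$, and the Novikov/Brown--Peterson relation that $k_{\SO}(n)$ is the maximal odd divisor of $k_{\U}(n)$. Your closing remark about $\nu_3\bigl(k_{\SO}(24)\bigr)$ also agrees with the paper's table comment.
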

\begin{Rem}\label{integral Steenrod}
Under integral Steenrod operations we mean stable natural transformations of the functor of (co)homology groups with coefficients in $\Z_{(p)}$. By the Yoneda lemma, integral Steenrod operations are in bijection with elements of $H^*\bigl(K(\Z_{(p)}); \Z_{(p)}\bigr)$. Theorem $10.4$ of \cite{M} implies that any integral Steenrod operation $\theta\colon H^*(-;\Z_{(p)}) \to H^{*+s}(-;\Z_{(p)})$ of degree $s>0$ can be written as a composition 
\[
H^*(-;\Z_{(p)}) \xrightarrow{\bmod p} H^*(-;\Z/p\Z) \xrightarrow{\theta_{p}}  H^{*+s-1}(-;\Z/p\Z)  \xrightarrow{\tilde{\beta}} H^{*+s}(-;\Z_{(p)})
\]
where $\theta_{p}\in \mathcal{A}_p$. By analogy, any integral Steenrod operation $\theta\colon H_*(-;\Z_{(p)}) \to H_{*-s}(-;\Z_{(p)})$ of degree~$s>0$ can be written as a composition 
\[
H_*(-;\Z_{(p)}) \xrightarrow{\bmod p} H_*(-;\Z/p\Z) \xrightarrow{\theta_{p}}  H_{*-s+1}(-;\Z/p\Z)  \xrightarrow{\tilde{\beta}} H_{*-s}(-;\Z_{(p)}),
\]
where $\theta_{p}\in \mathcal{A}_p$.
\end{Rem}
 
\begin{Lem} \label{differentials for cobordisms}
In the AHSS for the homology theory $\MU_*\otimes\Z_{(p)}$ of a finite CW complex $X$, for any prime~$p$ and nonnegative integers $t$ and $s$ the following statements hold. 
\begin{enumerate}
		\item The differential $d^t_{*,*}$ vanishes unless $t \equiv 1 \bmod 2(p-1)$. 
		\item If $t=2r(p-1)+1$, then 
\[
		d_{n,0}^{2r(p-1)+1}(p^{r-1}x)=\sum_{i} \theta_{r,i}(x)b_{r(p-1),i} \ \ \ (\text{modulo images of the previous differentials}).
\]
\end{enumerate}
Here, $b_{s,i}\in \Omega^{\U}_{2s}\otimes \Z_{(p)}$ are elements such that the set $\{b_{s,i}\}$ is a $\Z_{(p)}$-basis of $\Omega^{\U}_*\otimes \Z_{(p)}$, 
	$x$ is an element of $E_{n,0}^{2}=H_n(X;\Z_{(p)})$, and 
	$\theta_{r,i}$ are integral Steenrod operations of degree $2r(p-1)+1$.
	\end{Lem}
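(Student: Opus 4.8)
The plan is to reduce statement 1 to the $p$-local splitting of complex cobordism, and to derive statement 2 from naturality of the spectral sequence together with the classification of integral Steenrod operations in Remark~\ref{integral Steenrod}. I would begin by recalling the well-known homotopy equivalence $\MU_{(p)}\simeq\bigvee_I\Sigma^{2|I|}\mathrm{BP}$ with the Brown--Peterson spectrum, where $\mathrm{BP}_*=\Z_{(p)}[v_1,v_2,\dots]$ and $|v_i|=2(p^i-1)$. Since the AHSS is natural and additive under wedges (for a finite complex $X$ only finitely many summands contribute in each bidegree), the AHSS for $\MU_*\otimes\Z_{(p)}$ splits as the direct sum of the spectral sequences of the summands $\Sigma^{2|I|}\mathrm{BP}$. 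Because $p^i-1=(p-1)(1+p+\dots+p^{i-1})$, every $v_i$ sits in a degree divisible by $2(p-1)$, so the homotopy of each summand $\Sigma^{2|I|}\mathrm{BP}$ is concentrated in a single residue class modulo $2(p-1)$, and hence its $E_2$-page is supported on rows lying in one such class. As $d^t$ shifts the row degree by $t-1$, within each summand it can be nonzero only when $t-1\equiv0\pmod{2(p-1)}$; summing over summands gives statement 1.

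For statement 2 I set $t=2r(p-1)+1$, so that the target of $d^t_{n,0}$ is a subquotient of $H_{n-t}(X;\Omega^{\U}_{t-1}\otimes\Z_{(p)})\cong H_{n-t}(X;\Z_{(p)})\otimes(\Omega^{\U}_{2r(p-1)}\otimes\Z_{(p)})$, and $\{b_{r(p-1),i}\}$ is a $\Z_{(p)}$-basis of the free module $\Omega^{\U}_{2r(p-1)}\otimes\Z_{(p)}$. By statement 1 no bottom-row differential occurs before page $2(p-1)+1$, so $d^{2(p-1)+1}_{n,0}$ is defined on all of $H_n(X;\Z_{(p)})$; being a natural stable homology operation, its component along each $b_{p-1,i}$ is a stable natural transformation $H_n(-;\Z_{(p)})\to H_{n-2(p-1)-1}(-;\Z_{(p)})$, that is, an integral Steenrod operation $\theta_{1,i}$ of degree $2(p-1)+1$. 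By Remark~\ref{integral Steenrod} such an operation factors through $\tilde{\beta}$, hence takes values in the image of $\tilde{\beta}$, which consists of $p$-torsion elements. This settles the case $r=1$.

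Next I would run an induction on $r$. Suppose that for every $\rho\le r-1$ the element $d^{2\rho(p-1)+1}_{n,0}(p^{\rho-1}x)$ is given, modulo previous images, by integral Steenrod operations applied to $x$, and is therefore $p$-torsion valued. Writing $p^{r-1}x=p^{\,r-\rho}(p^{\rho-1}x)$ and using additivity of the differentials, at each page $2\rho(p-1)+1$ with $\rho<r$ one gets $d^{2\rho(p-1)+1}(p^{r-1}x)=p^{\,r-\rho}d^{2\rho(p-1)+1}(p^{\rho-1}x)=0$, since $r-\rho\ge1$; by statement 1 there are no further differentials to consider, so $p^{r-1}x$ survives to page $t=2r(p-1)+1$. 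The assignment $x\mapsto d^t_{n,0}(p^{r-1}x)$, read modulo images of the previous differentials (which form precisely its indeterminacy), is then a natural stable transformation defined on all of $H_n(-;\Z_{(p)})$; projecting onto $\{b_{r(p-1),i}\}$ and applying Remark~\ref{integral Steenrod} once more shows that its components are integral Steenrod operations $\theta_{r,i}$ of degree $t$, valued in the $p$-torsion image of $\tilde{\beta}$. This is exactly the asserted formula, and it closes the induction.

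I expect the survival/primary-reduction step to be the main obstacle: the whole point of the factor $p^{r-1}$ is that after this multiplication every intermediate higher differential vanishes on $x$, so that the page-$t$ differential collapses to a primary operation of $x$. In the argument above this rests on the single fact that an integral Steenrod operation of positive degree lands in the $p$-torsion image of $\tilde{\beta}$ (Remark~\ref{integral Steenrod}), combined with additivity; the delicate part is the bookkeeping of indeterminacies, namely that the ``previous images'' at page $t$ coincide with the indeterminacy of the resulting operation $\theta_{r,i}$. This is the homology $\MU$-analogue of Theorem~\ref{Buch.2}, and it may be cleanest to carry out the induction one $\mathrm{BP}$-summand at a time, where the generators $v_j$ are hit by integral lifts of the Milnor primitives in degrees $2(p^j-1)+1\equiv1\pmod{2(p-1)}$, keeping every computation inside the residue class dictated by statement 1.
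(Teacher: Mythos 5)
Your argument for statement 1 is correct but follows a genuinely different route from the paper: you invoke the Brown--Peterson splitting $\MU_{(p)}\simeq\bigvee_I\Sigma^{2|I|}\mathrm{BP}$, whereas the paper S-dualizes and simply cites the cohomological statement proved by Buchstaber (Lemma 2 of \cite{Buch.1}). Both work; yours trades the citation for the (also quotable) splitting theorem. Your survival step in statement 2 is also sound and is the same mechanism the paper exploits: since an integral Steenrod operation of positive degree factors through $\tilde{\beta}$, its values are killed by $p$, so for $\rho<r$ one has $d^{2\rho(p-1)+1}(p^{r-1}x)=p^{\,r-\rho}\,d^{2\rho(p-1)+1}(p^{\rho-1}x)$, which dies modulo previous images, and $p^{r-1}x$ reaches page $t=2r(p-1)+1$.

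The gap is in the extraction of the operations $\theta_{r,i}$ at page $t$ --- exactly the step you flag as delicate but then dispatch by ``applying Remark~\ref{integral Steenrod} once more''. For $r\ge 2$ the assignment $x\mapsto d^t_{n,0}(p^{r-1}x)$ does \emph{not} take values in the functor $H_{n-t}(-;\Z_{(p)})\otimes\bigl(\Omega^{\U}_{2r(p-1)}\otimes\Z_{(p)}\bigr)$: it takes values in the subquotient $E^t_{n-t,t-1}(X)$, whose denominator (the images of the previous differentials) depends on $X$. The Yoneda-type classification in Remark~\ref{integral Steenrod} applies to honest stable natural transformations $H_*(-;\Z_{(p)})\to H_{*-t}(-;\Z_{(p)})$; what you need is a natural \emph{lift} of your subquotient-valued transformation to such a transformation, i.e.\ a single family $\{\theta_{r,i}\}$ whose formula holds for all $X$ simultaneously. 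That lift is the real content of the lemma, and it does not follow from naturality and additivity alone, nor can you ``project onto $\{b_{r(p-1),i}\}$'' inside an $X$-dependent quotient. (For $r=1$ your argument is fine, because by statement 1 no earlier differential enters or leaves the relevant positions, so the page-$(2p-1)$ differential is an honest natural transformation on the $E_2$-terms.)

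The paper produces the lift by a universal-example argument, which is the missing ingredient: S-dualize, let $f\colon D(X)\to\Sigma^{-n}K(\Z_{(p)})$ classify $D(x)$, and run the cohomological AHSS on the spectrum $K(\Z_{(p)})$ itself. There, any representative of $d^{0,0}_{2r(p-1)+1}(p^{r-1}\iota)$ is by definition a tuple of classes in $H^{2r(p-1)+1}\bigl(K(\Z_{(p)});\Z_{(p)}\bigr)$, i.e.\ a tuple of integral Steenrod operations; moreover the survival of $p^{r-1}\iota$ comes for free in this example, since $H^{>0}\bigl(K(\Z_{(p)});\Z_{(p)}\bigr)$ consists of elements of order $p$. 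Functoriality of the AHSS along $f$ then yields the stated formula for every finite $X$, with indeterminacy mapping into indeterminacy. To close your induction you would need this universal-example step (or an equally explicit construction of the natural lift); as written, the final step of your statement-2 argument does not go through.
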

\begin{proof}
Since the AHSS is stable, we can replace $X$ by its suspension spectrum. Since the Spanier--White-head duality maps the AHSS for $\MU_*\otimes\Z_{(p)}$ of $X$ to the AHSS for $\MU^*\otimes\Z_{(p)}$ of $D(X)$, it is sufficient to prove a cohomological version of the theorem for the spectrum $D(X)$. The cohomological version of statement $1$ of the theorem was proved in Lemma $2$ of  \cite{Buch.1}. 

Now, let us prove statement $2$. Consider a map
\[
f\colon D(X)\to \Sigma^{-n}K\bigl(\Z_{(p)}\bigr)
\]
that classifies the class $D(x)$. Denote by $\iota\in H^0\bigl(K(\Z_{(p)});\Z_{(p)}\bigr)$ the fundamental class. Consider the AHSS for $\MU^*\otimes\Z_{(p)}$ of $K\bigl(\Z_{(p)}\bigr)$. Note that by Remark \ref{integral Steenrod} all nonzero classes in $H^{>0}\bigl(K(\Z_{(p)});\Z_{(p)}\bigr)$ have order~$p$. Also by statement $1$ of the theorem the differential $d_t$ is zero unless $t \equiv 1 \bmod 2(p-1)$. Therefore $p^{r-1}\iota$ lies in $E_{2r(p-1)+1}^{0,0}$. Since cohomology classes in $H^{>0}\bigl(K(\Z_{(p)});\Z_{(p)}\bigr)$ are integral Steenrod operations, then
\[
    d^{0,0}_{2r(p-1)+1}(p^{r-1}\iota)=\sum_{i} \theta_{r,i}(\iota)b^{r(p-1),i}  \ \ \  (\text{modulo images of the previous differentials}),
\]
where $b^{s,i}$ is Spanier--Whitehead dual to $b_{s,i}$. So we get a cohomological version of statement $2$ for the spectrum $K\bigl(\Z_{(p)}\bigr)$.  Finally, the functoriality of AHSS for the map $f$ implies statement $2$ for $D(X)$. 
\end{proof}

\begin{Lem}\label{t6}
If $X$ is a finite CW complex, then the differential $d^{n-1}_{n,0}$ in the AHSS for the theory $\MU_*\otimes \Z_{(p)}$ of $X$ vanishes. 
\end{Lem}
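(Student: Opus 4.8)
The plan is to reduce the statement to the vanishing of certain integral Steenrod operations landing in $1$-dimensional homology, and then to kill these operations by the excess estimates of Section~$3$ together with a Bockstein argument.

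First, by statement~$1$ of Lemma~\ref{differentials for cobordisms} the differential $d^{n-1}_{n,0}$ is automatically zero unless $n-1\equiv 1\pmod{2(p-1)}$, so we may assume $n=2r(p-1)+2$ for some $r\ge 1$; then $d^{n-1}_{n,0}=d^{2r(p-1)+1}_{n,0}$ has target $E^{n-1}_{1,n-2}$, a subquotient of $H_1\bigl(X;\Omega^{\U}_{n-2}\otimes\Z_{(p)}\bigr)$. By statement~$2$ of Lemma~\ref{differentials for cobordisms}, for every $x\in H_n(X;\Z_{(p)})$ the class $p^{r-1}x$ survives to this page and $d^{2r(p-1)+1}_{n,0}(p^{r-1}x)=\sum_i\theta_{r,i}(x)\,b_{r(p-1),i}$, where the $\theta_{r,i}$ are integral homology operations of degree $2r(p-1)+1$ taking values in $H_1(X;\Z_{(p)})$. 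Since the $b_{r(p-1),i}$ form a basis of $\Omega^{\U}_{n-2}\otimes\Z_{(p)}$, it suffices to prove that every such operation $\theta_{r,i}$ vanishes on integral classes when its target sits in dimension~$1$. By Remark~\ref{integral Steenrod} we may write $\theta_{r,i}=\tilde\beta\circ(\theta_{r,i})_p\circ(\bmod\,p)$, where $(\theta_{r,i})_p\in\mathcal{A}_p$ has degree $2r(p-1)$ and is a combination of reduced powers, while the final Bockstein drops the degree from $2$ to $1$.

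The key step is the vanishing of $\theta_{r,i}$ into $H_1$. To detect a torsion class in $H_1(X;\Z_{(p)})$ we pair with $H^1(X;\Q/\Z_{(p)})$, which separates points by Lemma~\ref{tech for duality}. Using the adjunction between the Bockstein $\tilde\beta$ and the Bockstein of diagram~\eqref{comm diagram for coeff}, together with the standard antipode formula $\langle P_*a,w\rangle=\langle a,\chi(P)w\rangle$, the pairing $\bigl\langle\theta_{r,i}(x),\psi\bigr\rangle$ is rewritten as $\pm\bigl\langle\bar x,\ \chi\bigl((\theta_{r,i})_p\bigr)\,\delta\psi\bigr\rangle$, where $\bar x\in H_n(X;\Z/p)$ is the reduction of $x$ and $\delta\psi\in H^2(X;\Z/p)$ is the Bockstein of $\psi$. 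Now the minimal excess of a degree $2r(p-1)$ reduced power equals $\ex(r)$ (Theorem~\ref{combinatorial} and Lemma~\ref{tech for chi(P^r)}); hence whenever $\ex(r)>2$ the operation $\chi\bigl((\theta_{r,i})_p\bigr)$ annihilates the $2$-dimensional class $\delta\psi$, and $\theta_{r,i}$ vanishes identically.

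The hard part is the exceptional range $\ex(r)=2$, which by Lemma~\ref{number} occurs precisely when $r=1+p+\dots+p^{j-1}$; there $\chi\bigl((\theta_{r,i})_p\bigr)$ can act nontrivially on $H^2$ (it behaves like a $p^{j}$-th power). In this case the plan is to exploit that $\bar x$ is the reduction of an \emph{integral} class, so that $\bar x$ annihilates the image of the Bockstein $\delta\colon H^{n-1}(X;\Q/\Z_{(p)})\to H^n(X;\Z/p)$; it then remains to check that the reduced power $\chi\bigl((\theta_{r,i})_p\bigr)$ carries the Bockstein image in $H^2$ back into the Bockstein image in $H^n$. This is the compatibility of reduced powers with the Bockstein modulo the Milnor primitives $Q_i$, and ultimately reflects the triviality of $\widetilde H^{>1}\bigl(K(\Z,1);\Z/p\bigr)$. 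This Bockstein bookkeeping is the main obstacle. Once it is settled, $d^{2r(p-1)+1}_{n,0}(p^{r-1}x)=0$ for all $x$; since the full differential on $E^{n-1}_{n,0}$ is a stable higher operation of degree $n-1$ whose values lie in dimension~$1$, the same detection argument (applied now to arbitrary survivors, not only to $p^{r-1}$-multiples) must still be carried out to conclude that it vanishes on all of $E^{n-1}_{n,0}$, completing the proof.
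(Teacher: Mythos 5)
Your strategy diverges from the paper's, and as written it has two genuine gaps, one of which is fatal to the whole plan. The serious gap is the reduction to $p^{r-1}$-multiples. Statement~2 of Lemma~\ref{differentials for cobordisms} computes the differential \emph{only} on classes of the form $p^{r-1}x$, whereas Lemma~\ref{t6} asserts vanishing on all of $E^{n-1}_{n,0}$. An arbitrary survivor $u\in E^{n-1}_{n,0}$ need not be (represented by) a $p^{r-1}$-multiple, and by linearity the vanishing on $p^{r-1}$-multiples only yields $p^{r-1}d^{n-1}_{n,0}(u)=0$, i.e.\ that the value is killed by a power of $p$; the target $E^{n-1}_{1,n-2}$ is a subquotient of $H_1\bigl(X;\Omega^{\U}_{n-2}\otimes\Z_{(p)}\bigr)$ and can contain plenty of $p$-torsion, so this does not conclude. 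Your closing sentence concedes that ``the same detection argument'' must be run on arbitrary survivors, but there is nothing for it to run on: for survivors that are not $p^{r-1}$-multiples the differential is a genuinely higher operation, with no expression through elements of $\mathcal{A}_p$, so Remark~\ref{integral Steenrod}, the antipode formula and the excess estimates are all unavailable. This is precisely the difficulty the paper's proof is designed to avoid: it argues by contradiction, detects a hypothetical nonzero value $z$ of the differential by a class $y\in H^1\bigl(X;\Omega^{\U}_{n-2}\otimes(\Q/\Z_{(p)})\bigr)$ via the universal coefficient theorem, and pushes forward along the classifying map $f\colon X\to K\bigl(\Omega^{\U}_{n-2}\otimes(\Q/\Z_{(p)}),1\bigr)$; since the $\Z_{(p)}$-homology of that Eilenberg--MacLane space vanishes in positive even degrees while $\Omega^{\U}_*$ is concentrated in even degrees, \emph{all} differentials of its AHSS vanish, and naturality forces $f_*z=0$, contradicting the choice of $y$. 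That argument requires no formula for the differential and applies to every element of $E^{n-1}_{n,0}$ at once.

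The second gap is the one you flag yourself: the exceptional case $\ex(r)=2$ (equivalently $r=1+p+\dots+p^{j-1}$) is declared ``the main obstacle'' and left unresolved, so the proof is incomplete even on $p^{r-1}$-multiples. For what it is worth, this case can be closed along the lines you hint at: since $w=\delta\psi$ satisfies $\beta_p w=0$, the structure of $H^*\bigl(K(\Z/p\Z,2);\Z/p\Z\bigr)$ shows that every operation in $\mathcal{A}_p$ of degree $2(p^j-1)$ sends $w$ to a multiple of $w^{p^j}$ modulo terms that die on $w$ or pair trivially with the integral class $\bar x$; and $\langle \bar x, w^{p^j}\rangle=0$ because $w^{p^j}$ is the mod~$p$ reduction of the integral \emph{torsion} class $(\delta\psi)^{p^j}$, while the $\Z_{(p)}$-valued pairing of an integral homology class with a torsion cohomology class vanishes. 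Finally, a minor inaccuracy: you invoke Lemma~\ref{tech for duality} to say that $H^1\bigl(X;\Q/\Z_{(p)}\bigr)$ separates points of $H_1\bigl(X;\Z_{(p)}\bigr)$, but that lemma assumes $H_{\le N}(X;\Q)=0$, which a general finite complex does not satisfy; the separation statement you need does hold for finite complexes, by the universal coefficient theorem and the divisibility of $\Q/\Z_{(p)}$, which is how the paper's proof sets up its detecting class.
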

\begin{proof} 
In what follows, the differentials of the AHSS are considered as partially defined multivalued maps on $E_2^{*,*}$--term. We will prove the stronger statement: if $d^{n-1}_{n,0}$ is defined on $x$, then $d^{n-1}_{n,0}(x)=\{0\}$ as sets. We argue by contradiction. Assume that there exists  $x\in H_n(X;\Z_{(p)})$ such that $x\in E^{n-1}_{n,0}$  and $d^{n-1}_{n,0}(x)\ne \{0\}$. Let $z\in H_1\bigl(X;\Omega^{\U}_{n-2}\otimes \Z_{(p)}\bigr)$ be a non-zero class such that  $z\in d^{n-1}_{n,0}(x)$. 

Since the abelian group $\Omega^{\U}_{n-2}\otimes\bigl(\Q/\Z_{(p)}\bigr)$ is divisible, then the universal coefficient theorem yields that the natural map 
$$
    H^1\Bigl(X; \Omega^{\U}_{n-2}\otimes \bigl(\Q/\Z_{(p)}\bigr)\Bigr) \to \Hom\Bigl(H_1\bigl(X; \Z_{(p)}\bigr); \Omega^{\U}_{n-2}\otimes \bigl(\Q/\Z_{(p)}\bigr)\Bigr)
$$
is an isomorphism. Since the group $\Omega^{\U}_{n-2}$ is free abelian and finitely generated, then we have the isomorphism
$$
   \Hom\Bigl(H_1\bigl(X; \Z_{(p)}\bigr); \Omega^{\U}_{n-2}\otimes\bigl(\Q/\Z_{(p)}\bigr)\Bigr) \cong \Hom\Bigl(H_1\bigl(X; \Omega^{\U}_{n-2}\otimes\Z_{(p)}\bigr);  \Q/\Z_{(p)}\Bigr).
$$
Therefore, we get the isomorphism 
$$
   \Psi\colon H^1\Bigl(X; \Omega^{\U}_{n-2}\otimes\bigl(\Q/\Z_{(p)}\bigr)\Bigr) \to \Hom\Bigl(H_1\bigl(X; \Omega^{\U}_{n-2}\otimes\Z_{(p)}\bigr);  \Q/\Z_{(p)}\Bigr),
$$
which is functorial in $X$. Since the group $H_1\bigl(X; \Omega^{\U}_{n-2}\otimes \Z_{(p)}\bigr)$ is a finitely generated $\Z_{(p)}$-module, then there exists $y\in  H^1\Bigl(X; \Omega^{\U}_{n-2}\otimes \bigl(\Q/\Z_{(p)}\bigr)\Bigr)$ such that $\Psi(y)(z)\ne0$.   

Denote by $f\colon X\to K\Bigl(\Omega^{\U}_{n-2}\otimes \bigl(\Q/\Z_{(p)}\bigr),1\Bigr)$ the classifying map for the class $y$. Note that 
 $$
  K\Bigl(\Omega^{\U}_{n-2}\otimes \bigl(\Q/\Z_{(p)}\bigr),1\Bigr) \simeq
  \prod_{\rk\bigl(\Omega^{\U}_{n-2}\bigr)} K\bigl(\mathbb{Q}/\Z_{(p)},1\bigr) \simeq  
  \prod_{\rk\bigl(\Omega^{\U}_{n-2}\bigr)} \colim_{n} K\bigl(\Z/p^n\Z, 1\bigr).
$$
Since the homology functor commutes with direct limits and  $H_{2k}\Bigl(K\bigl(\Z/p^n\Z, 1\bigl);\Z_{(p)}\Bigr)=0$ for any $k\ne0$ and $n>0$ (see \cite{Hatch}, example 2.43), we get that 
\[
H_{2k}\Bigl(K\bigl(\Omega^{\U}_{n-2}\otimes \bigl(\Q/\Z_{(p)}\bigr),1\bigr);\Z_{(p)}\Bigr)=0
\]
for all $k\ne0$. On the other hand, we have that $\Omega^{\U}_{l}=0$ for any odd $l$. Thus all the differentials in the AHSS for $\MU_*\otimes \Z_{(p)}$  of $K\Bigl(\Omega^{\U}_{n-2}\otimes\bigl(\Q/\Z_{(p)}\bigr),1\Bigr)$  vanish. In particular, $d^{n-1}_{n,0}(f_*x)= \{0\}$. Since $$f_*\bigl(d^{n-1}_{n,0}(x)\bigr)\subset d^{n-1}_{n,0}(f_*x),$$ then we get that $f_*\bigl(d^{n-1}_{n,0}(x)\bigr)=\{0\}$. Therefore 
$$
    0\ne \Psi(y)(z)=\Psi(f^*\iota)(z)=\Psi(\iota)(f_*z)=\Psi(\iota)(0)=0,
    $$
   where
$
   \iota \in H^1\Bigl(K\Bigl(\Omega^{\U}_{n-2}\otimes\bigl(\mathbb{Q}/\Z_{(p)}\bigr),1\Bigr);\Omega^{\U}_{n-2}\otimes\bigl(\mathbb{Q}/\Z_{(p)}\bigr)\Bigr) 
$ is the fundamental class. Contradiction. 
\end{proof}   
    
\begin{Th}\label{upper boundry for realization}
Let $X$ be a topological space and $x\in H_n(X;\Z)$. Then the homology class 
\[
\prod_{p\ \text{prime}}p^{\bigl[\frac{n-3}{2p-2}\bigr]}x
\]
is $\U$-realizable. 
\end{Th}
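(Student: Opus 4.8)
The plan is to read off $\U$-realizability from the Atiyah--Hirzebruch spectral sequence (AHSS) for $\MU_*$. Its edge homomorphism is exactly the natural transformation $\omega_*\colon \MU_n(X)\to H_n(X;\Z)=E^2_{n,0}$, and its image is the subgroup $E^\infty_{n,0}$ of permanent cycles (there are no differentials \emph{into} the bottom row $t=0$). Hence a class $y\in H_n(X;\Z)$ is $\U$-realizable if and only if it survives all differentials of the AHSS. Since realizability may be tested one path-component at a time, I would assume $X$ connected and pass to reduced homology, so that $E^2_{0,*}=0$ and no differential lands in the column $s=0$; I would also assume $n\ge 3$, the cases $n\le 2$ being trivial. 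Finally, because the differentials of the AHSS are torsion (rationally $\MU$ splits as a product of rational Eilenberg--MacLane spectra, so the spectral sequence collapses), a torsion class in some $E^r$ vanishes as soon as it vanishes after localization at every prime. Thus it suffices, for each prime $p$, to set $e_p=\bigl[\tfrac{n-3}{2(p-1)}\bigr]$ and show that $p^{e_p}x$ is a permanent cycle in the $p$-local AHSS for $\MU_*\otimes\Z_{(p)}$. The integral statement then follows: for $N=\prod_p p^{e_p}$ and each prime $p$, the localization $(Nx)_{(p)}$ is a unit multiple of $p^{e_p}x_{(p)}$, hence a permanent cycle $p$-locally, so $Nx$ survives integrally.

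Fix $p$ and work in the $p$-local AHSS. By statement~$1$ of Lemma~\ref{differentials for cobordisms} the only possibly nonzero differentials on the bottom row are $d^{2r(p-1)+1}$, $r\ge 1$, and by statement~$2$ one has $d^{2r(p-1)+1}(p^{r-1}x)=\sum_i\theta_{r,i}(x)\,b_{r(p-1),i}$ modulo images of earlier differentials. The crucial observation is that each $\theta_{r,i}(x)$ is an \emph{integral Steenrod operation}, factoring through a $\bmod\ p$ class followed by $\tilde\beta$, and therefore has order dividing $p$; so every such differential is annihilated by one further factor of $p$. I would turn this into an induction on the page: assuming $p^{e_p}x$ has survived to the page $2r(p-1)+1$, linearity gives $d^{2r(p-1)+1}(p^{e_p}x)=p^{\,e_p-r+1}\sum_i\theta_{r,i}(x)\,b_{r(p-1),i}$, which vanishes whenever $e_p-r+1\ge 1$, i.e.\ for all $r\le e_p$. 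Hence $p^{e_p}x$ passes through every differential indexed by $r\le e_p$ and survives to the page $2(e_p+1)(p-1)+1$.

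It then remains to show that all differentials $d^{2r(p-1)+1}$ with $r\ge e_p+1$ vanish on $p^{e_p}x$ for dimensional reasons. The target of $d^{2r(p-1)+1}$ out of $E_{n,0}$ is $E_{\,n-2r(p-1)-1,\;2r(p-1)}$, whose first index is the homological degree of the coefficient. For $r\ge e_p+1$ one has $n-2r(p-1)-1\le 1$, so the differential lands in reduced homology of degree at most $1$: degrees $\le 0$ contribute nothing (we arranged $\tilde H_0=0$ and negative degrees are absent), while the single remaining case, landing in degree $1$, is exactly the differential $d^{\,n-1}_{n,0}$ (there $2r(p-1)+1=n-1$), which vanishes by Lemma~\ref{t6}. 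This last point is the heart of the argument: it is precisely the vanishing furnished by Lemma~\ref{t6} that removes one potential power of $p$ and yields the exponent $\bigl[\tfrac{n-3}{2(p-1)}\bigr]$ rather than the weaker $\bigl[\tfrac{n-2}{2(p-1)}\bigr]$. Thus $p^{e_p}x$ is a permanent cycle $p$-locally, completing the argument.

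The main obstacle I anticipate is the bookkeeping around the phrase ``modulo images of previous differentials'' in Lemma~\ref{differentials for cobordisms}: the formula for $d^{2r(p-1)+1}(p^{r-1}x)$ lives in a subquotient, and one must ensure the $p$-torsion conclusion is not spoiled by the indeterminacy. I would resolve this by noting that the indeterminacy is the subgroup generated by values of earlier differentials, all of the form $\sum\theta(\cdot)b$ of order dividing $p$; such elements lie in the $p$-torsion subgroup, which is elementary abelian, so the whole coset is killed by $p$. Multiplying by $p^{\,e_p-r+1}$ with $e_p-r+1\ge1$ therefore annihilates both the leading term and the indeterminacy simultaneously. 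The remaining verifications---that the edge map has image $E^\infty_{n,0}$, the reduction to connected $X$ with reduced homology, and the torsion descent from $p$-local to integral survival---are routine.
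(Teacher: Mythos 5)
Your proposal is correct and takes essentially the same route as the paper's proof: work prime by prime in the AHSS for $\MU_*\otimes\Z_{(p)}$, use Lemma~\ref{differentials for cobordisms} together with the fact that integral Steenrod operations of positive degree are killed by $p$ to push $p^{e_p}x$ past all differentials with $r\le e_p$, and invoke Lemma~\ref{t6} for the crucial degree-$(n-1)$ differential. The only cosmetic differences are that the paper disposes of the degree-$n$ differential by noting it has finite order while the zero column is torsion free (instead of your reduced-homology/connectedness device), and that the paper first reduces to a finite CW complex via compact supports of singular classes --- a one-line step you should state explicitly, since Lemmas~\ref{differentials for cobordisms} and~\ref{t6} are formulated for finite complexes.
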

\begin{proof}
From the definition of singular homology it follows that the class $x$ can be realized as an image of a homology class of a finite simplicial 
complex. So we can assume that $X$ is a finite CW complex. Then it is sufficient to prove that for
 any prime p the class $p^{\bigl[\frac{n-3}{2p-2}\bigr]}x$ lies in the $E^{\infty}_{n,0}$--term  of the AHSS for 
 $\MU_*\otimes \Z_{(p)}$ of $X$. Fix a prime $p$ and denote the number $\bigl[\frac{n-3}{2p-2}\bigr]$ by $l$.
  Statement $2$ of  Lemma \ref{differentials for cobordisms} implies that $p^{l}x$ lies in $E^{2(l+1)(p-1)+1}_{n,0}$. 
  Since $2(l+1)(p-1)+1>n-2$, then only differentials $d_{n,0}^{n-1}$ and $d_{n,0}^{n}$ can be nontrivial on $p^{l}x$. But the differential  $d_{n,0}^{n-1}$ vanishes by Lemma \ref{t6}, and the differential $d_{n,0}^{n}$ vanishes since it has finite order and the zero column $E_{0,*}^{*}$  is torsion free.  
\end{proof}
\begin{Lem}\label{7895}
Let $X$ be a finite CW complex and $x\in H_n(X;\Z/p\Z)$. If $\tilde{\beta}(x)\ne 0$, then there exists a class of finite order $y\in H^n(X;\Z_{(p)})$ such that $\langle x, y \rangle \ne 0$. 
\end{Lem}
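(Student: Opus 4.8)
The plan is to reduce the whole statement to the chain level and exploit the structure theory of complexes over the principal ideal domain $\Z_{(p)}$. Since $X$ is a finite CW complex, its cellular chain complex $C_*=C_*(X)\otimes\Z_{(p)}$ is a bounded complex of finitely generated free $\Z_{(p)}$-modules, and over $\Z_{(p)}$ any such complex splits (by Smith normal form, applied inductively) as a finite direct sum of \emph{elementary} complexes of two types: ``spheres'' $S^{(d)}$, equal to $\Z_{(p)}$ concentrated in a single degree $d$ with zero differential, and ``disks'' $D^{(d,k)}$, equal to $\Z_{(p)}\xrightarrow{\;p^{k}\;}\Z_{(p)}$ in degrees $(d,d-1)$ with $k\ge 1$ (acyclic pieces $\Z_{(p)}\xrightarrow{\;\cong\;}\Z_{(p)}$ carry no (co)homology and may be discarded). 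All the functors in play — homology with $\Z/p\Z$ coefficients, cohomology with $\Z_{(p)}$ coefficients, the Bockstein $\tilde\beta$, and the Kronecker pairing — respect this direct sum; in particular, since cohomology is dual to homology summand by summand, the Kronecker pairing is an orthogonal direct sum, so classes supported on distinct elementary summands pair to zero. Thus it suffices to understand each summand in isolation.

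First I would tabulate, for each elementary summand, the groups $H_n(\,\cdot\,;\Z/p\Z)$, $H_{n-1}(\,\cdot\,;\Z_{(p)})$, the homology Bockstein $\tilde\beta\colon H_n(\,\cdot\,;\Z/p\Z)\to H_{n-1}(\,\cdot\,;\Z_{(p)})$, and $H^n(\,\cdot\,;\Z_{(p)})$. The key outcome is that $\tilde\beta$ is nonzero on $H_n$ only for a disk $D^{(n,k)}$: there $H_n(\,\cdot\,;\Z/p\Z)=\Z/p\Z$ is generated by the reduction $e_n$ of the top generator, $H_{n-1}(\,\cdot\,;\Z_{(p)})=\Z/p^{k}\Z$, and a direct computation of the Bockstein gives $\tilde\beta(e_n)=p^{k-1}[e_{n-1}]\neq 0$. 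Crucially, for this same summand $H^n(\,\cdot\,;\Z_{(p)})=\Z/p^{k}\Z$ is \emph{entirely torsion}, generated by the dual cocycle class $e^n$, and the pairing is $\langle e_n,e^n\rangle=1$ in $\Z/p\Z$. By contrast, the only other elementary summands that carry a nonzero $H_n(\,\cdot\,;\Z/p\Z)$ are the sphere $S^{(n)}$ and the disks $D^{(n+1,k)}$, and for both of these $H_{n-1}(\,\cdot\,;\Z_{(p)})=0$, so $\tilde\beta$ automatically vanishes there.

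With this in hand the conclusion is immediate. Writing $x=\sum_\alpha x_\alpha$ along the decomposition, the hypothesis $\tilde\beta(x)\neq 0$ forces $\tilde\beta(x_{\alpha_0})\neq 0$ for some summand $\alpha_0$, which by the previous step must be a disk $D^{(n,k)}$; in particular $x_{\alpha_0}=c\,e_n$ with $c\not\equiv 0\pmod p$. Taking $y$ to be the torsion generator $e^n$ of $H^n(C^{(\alpha_0)};\Z_{(p)})=\Z/p^{k}\Z$, extended by zero on all other summands, produces a class of finite order $p^{k}$ in $H^n(X;\Z_{(p)})$ with $\langle x,y\rangle=\langle x_{\alpha_0},y\rangle=c\neq 0$, which is exactly the required class.

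The main obstacle to be aware of — and the reason one cannot finish in one line — is that it does \emph{not} suffice to pair $x$ against the image of the cohomology Bockstein $\tilde\beta\colon H^{n-1}(X;\Z/p\Z)\to H^n(X;\Z_{(p)})$. Those are precisely the classes of order $p$, and by adjointness of the two Bocksteins $\langle x,\tilde\beta(w)\rangle$ is controlled by the \emph{mod $p$ reduction} of $\tilde\beta(x)$, which can vanish even though $\tilde\beta(x)\neq 0$: this happens exactly when $\tilde\beta(x)$ lands in the $p$-torsion of a $\Z/p^{k}\Z$ summand of $H_{n-1}(X;\Z_{(p)})$ with $k\ge 2$. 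Hence the detecting class $y$ must in general have order $p^{k}>p$, and it is precisely the elementary-summand analysis above that both locates the relevant higher-order torsion summand and supplies the explicit nonzero pairing.
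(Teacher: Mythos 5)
Your proof is correct, but it takes a genuinely different route from the paper's. You invoke the classification of bounded complexes of finitely generated free modules over the DVR $\Z_{(p)}$, splitting the cellular chain complex of $X$ into spheres and disks $\Z_{(p)}\xrightarrow{p^k}\Z_{(p)}$, and then verify everything summand by summand: the hypothesis $\tilde{\beta}(x)\ne 0$ localizes $x$ on a disk $D^{(n,k)}$, whose $n$-th $\Z_{(p)}$-cohomology is pure torsion $\Z/p^k\Z$, and its generator is the detecting class. The paper instead never touches the chain level: it compares the universal coefficient sequences $0 \to \Ext\bigl(H^{n-1}(X;\Z_{(p)}),M\bigr) \to H_n(X;M) \to \Hom\bigl(H^n(X;\Z_{(p)}),M\bigr) \to 0$ for $M=\Z/p\Z$ and $M=\Z_{(p)}$ under the reduction map, deduces by a diagram chase that $\tilde{\beta}(x)\ne 0$ forces the functional $\langle x,-\rangle$ to lie outside the image of $\psi\colon \Hom\bigl(H^n(X;\Z_{(p)}),\Z_{(p)}\bigr)\to \Hom\bigl(H^n(X;\Z_{(p)}),\Z/p\Z\bigr)$, and identifies that image as precisely the homomorphisms annihilating the torsion subgroup $T\subset H^n(X;\Z_{(p)})$; hence $\langle x,-\rangle$ is nonzero on some finite-order class. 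The two arguments are close in spirit (your elementary-summand decomposition is exactly what makes the paper's diagram chase work behind the scenes), but yours is more explicit and self-contained: it produces the detecting class together with its exact order $p^k$, and your closing observation --- that classes of order $p$, i.e.\ the image of the cohomology Bockstein, cannot suffice when $\tilde{\beta}(x)$ is divisible by $p$ in a $\Z/p^k\Z$ summand with $k\ge 2$ --- is accurate and pinpoints why a one-line adjointness argument fails. What the paper's approach buys in exchange is brevity and independence from any chain-level basis choices, at the cost of leaving the diagram chase to the reader.
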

\begin{proof}
Since $X$ is a finite CW complex, then for any $\Z_{(p)}$-module $M$ we have the exact sequence
\begin{equation}\label{6998}
0 \to \Ext\bigl(H^{m-1}(X; \Z_{(p)}), M \bigr) \to H_m(X;M) \to \Hom\bigl(H^m(X;\Z_{(p)}), M \bigr) \to 0 
\end{equation} 
where the map  $H_m(X;M) \to \Hom\bigl(H^m(X;\Z_{(p)}), M \bigr)$ is given by $a \mapsto \langle a, - \rangle$. Consider the following  commutative diagram
\[
\xymatrix{
0\ar[r]& \Ext\bigl(H^{m-1}(X;\Z_{(p)}), \Z/p\Z\bigr) \ar[r] & H_m(X;\Z/p\Z) \ar[r] & \Hom\bigl(H^m(X;\Z_{(p)}), \Z/p\Z\bigr) \ar[r] & 0  \\
0 \ar[r] & \Ext\bigl(H^{m-1}(X;\Z_{(p)}), \Z_{(p)}\bigr) \ar@{->>}[u] \ar[r] & H_m(X;\Z_{(p)}) \ar[r] \ar[u]^{\bmod p} & \Hom\bigl(H^m(X;\Z_{(p)}),\Z_{(p)}\bigr) \ar[u]_{\psi} \ar[r] & 0
}
\]
where the first and second raws are the sequences \eqref{6998} for $M=\Z/p\Z$ and $M=\Z_{(p)}$ respectively and all the vertical maps are induced by the reduction modulo $p$ map $\Z_{(p)}\to \Z/p\Z$. Since the $\Ext^2$ functor in the category of $\Z_{(p)}$-modules vanish, then the left vertical map is surjective. By diagram chasing it can be checked that if $a\in H_m(X;\Z/p\Z)$ and $\tilde{\beta}(a)\ne 0$ (that is $a$ is not in the image of $\bmod \  p$), then the element 
\[
\langle a, - \rangle \in \Hom\bigl(H^m(X;\Z_{(p)}), \Z/p\Z\bigr)
\]
is not in the image of $\psi$. 

Since $X$ is a finite CW complex, then $H^m(X;\Z_{(p)})\cong F\oplus T$, where $F$ is a finitely generated free \linebreak  $\Z_{(p)}$-module and $T$ is a finite abelian $p$-group. So 
\[
\Hom\bigl(H^m(X;\Z_{(p)}),\Z_{(p)}\bigr)\cong \Hom(F, \Z_{(p)}).
\]
Therefore, a homomorphism $f\colon H^m(X;\Z_{(p)}) \to  \Z/p\Z$ lies in the image of $\psi$ if and only if $f(T)=0$. This concludes the proof. 
\end{proof}
\begin{Lem}\label{tech for small n}
Let $X$ be a finite CW complex and $x\in H_n\bigl(X;\Z_{(p)}\bigr)$. Then the following statements hold. 
\begin{enumerate}
\item If $\theta$ is an integral Steenrod operation of degree $2r(p-1)+1$ with $r\le p$ and $n\le 2rp$, then $\theta(x)=0$. 
\item If $\theta$ is an integral Steenrod operation of degree $2(p+2)(p-1)+1$ and $n<2p^2+2p$,  then $\theta(x)=0$. 
\end{enumerate}
\end{Lem}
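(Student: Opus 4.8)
The plan is to combine the description of integral Steenrod operations in Remark~\ref{integral Steenrod} with Lemma~\ref{7895} and the excess estimates of Section~3. Write $\theta=\tilde\beta\,\theta_p\,(\bmod\,p)$ where $\theta_p\in\mathcal{A}_p$ has degree $2r(p-1)$, and put $\bar x=x\bmod p\in H_n(X;\Z/p\Z)$. As $\bar x$ is a reduction, $\tilde\beta\bar x=0$, and since $\beta_p$ factors through $\tilde\beta$ we get $\beta_p\bar x=0$. To prove $\theta(x)=\tilde\beta(\theta_p\bar x)=0$ it is enough, by the contrapositive of Lemma~\ref{7895}, to show that $\langle\theta_p\bar x,\,y\rangle=0$ for every finite-order class $y\in H^{n-2r(p-1)}(X;\Z_{(p)})$. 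Replacing $y$ by $\bar y=y\bmod p$ (the pairing with a $\Z/p\Z$-class factors through reduction) and using the adjunction $\langle\theta_p\bar x,\bar y\rangle=\langle\bar x,\chi(\theta_p)\bar y\rangle$ employed in the proof of Theorem~\ref{Lem for example}, it remains to prove $\langle\bar x,\chi(\theta_p)\bar y\rangle=0$; note that $\bar y$ is again a reduction, so $\beta_p\bar y=0$ as well.

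Now I would expand $\chi(\theta_p)$ in the admissible basis of $\mathcal{A}_p$ and discard the monomials that cannot contribute. A monomial whose rightmost factor is $\beta_p$ annihilates $\bar y$, and one whose leftmost factor is $\beta_p$ pairs trivially with $\bar x$, because $\beta_p$ is self-adjoint up to sign and $\beta_p\bar x=0$; so only admissible monomials with no Bockstein at either end survive. For odd $p$ a short degree count shows that such a monomial of degree $2r(p-1)$ can contain an interior Bockstein only when $r\ge 2p$; since $r\le p$ in part~(1) and $r=p+2<2p$ in part~(2) (for $p\ge3$), there are none, so every surviving monomial is $P^I$ with $I\in\varGamma_r$. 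The prime $p=2$ is handled by inspection in the only relevant ranges $r\le2$ and $r=4$, where the survivors are again of this form (the smallest admissible of even degree with an odd exponent, $Sq^7Sq^3$, already has degree $10$). By Lemma~\ref{tech for combinatorial trm} one has $\varGamma_r=\{(r)\}$ when $r\le p$, so in part~(1) the surviving part of $\chi(\theta_p)$ is a scalar multiple of $P^r$, while in part~(2) every surviving $P^I$ has $\ex(P^I)\ge\ex(p+2)=4$.

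The proof then splits according to $\deg\bar y=n-2r(p-1)$. In part~(2), $n<2p^2+2p=\ex(p+2)+2(p+2)(p-1)$ gives $\deg\bar y<4\le\ex(P^I)$, so every surviving $P^I$ annihilates $\bar y$ and the pairing vanishes. In part~(1) with $n<2rp$ we have $\deg\bar y<2r=\ex(P^r)$, so $P^r\bar y=0$ and we are done. The genuinely delicate case is the boundary $n=2rp$ of part~(1): here $\deg\bar y=2r$, so $P^r$ acts on a class of exactly its own excess, hence by the top-power property $P^r\bar y=\bar y^{\,p}$, and $\langle\bar x,\chi(\theta_p)\bar y\rangle$ becomes a multiple of $\langle\bar x,\bar y^{\,p}\rangle=\langle x,y^p\rangle\bmod p$. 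The key observation is that $y^p$ is again of finite order, so the integer $\langle x,y^p\rangle$ is a torsion element of $\Z_{(p)}$ and therefore zero. This passage from the top surviving operation to a $p$-th power, killed by torsion-freeness of $\Z_{(p)}$, is the step I expect to carry the real weight; the remainder is bookkeeping with excess and the Bockstein.
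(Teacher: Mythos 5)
Your proof is correct and follows essentially the same route as the paper's: the contrapositive of Lemma~\ref{7895} (the paper phrases it as a contradiction), the adjunction $\langle\theta_p\bar x,\bar y\rangle=\langle\bar x,\chi(\theta_p)\bar y\rangle$, expansion of $\chi(\theta_p)$ in the admissible basis with Bockstein terms discarded because $\bar x$ and $\bar y$ are reductions of $\Z_{(p)}$-classes, an excess count against $\deg\bar y$, and, at the boundary $n=2rp$, the identification $P^r\bar y=\bar y^{\,p}$ combined with $\langle x,y^p\rangle=0$ by finite order of $y^p$ and torsion-freeness of $\Z_{(p)}$. The only difference is organizational: the paper writes out the cases $p=2$, $r\le 2$ and the degree-$8$ case explicitly and declares the remaining primes analogous, whereas you treat all primes uniformly via the degree count excluding interior Bocksteins and the observation that $\varGamma_r=\{(r)\}$ for $r\le p$.
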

\begin{proof} 
Firstly, let us prove statement $1$ for $p=2$ and $r=2$. Then $n\le 8$. We argue by contradiction. Write $\theta$ as $\tilde{\beta}\theta_{2}$ with $\theta_2\in \mathcal{A}_2$. Assume that $\theta(x)\ne0$. Then by Lemma \ref{7895} there exists a class of finite order $y\in H^{n-4}(X;\Z_{(2)})$ such that $\langle \theta_2(x), y\rangle\ne 0$. By \cite{Thom}, \S 3.4 we have the equality $\langle \theta_2(x), y\rangle=\langle x, \chi(\theta_2)(y)\rangle$. Since the degree of $\chi(\theta_2)$ is $4$, then the structure of $\mathcal{A}_2$ yields that 
\[
\chi(\theta_2)=\varepsilon \Sq^4 +\varepsilon' \Sq^3\Sq^1,
\]
 where $\varepsilon, \varepsilon' \in \Z/2\Z$. Since $y$ is defined over $\Z_{(2)}$, then $\Sq^1(y)=0$. So we get that $\langle x, \Sq^4(y)\rangle\ne 0$. If~$n<8$, then $\deg(y)<4$ and so  $\Sq^4(y)=0$. If~$n=8$, then $\Sq^4(y)=y^2$ and so $\langle x, \Sq^4(y)\rangle=\langle x, y^2\rangle= 0$ since the classes $x$ and~$y$ are defined over $\Z_{(2)}$ and $y$ is of finite order. Contradiction. 

For $p=2$, $r=1$ and for $p>2$ statement $1$ can be proved analogously. 

Now let us prove statement $2$ for $p=2$. Then $n<12$.  We argue by contradiction. Write~$\theta$ as $\tilde{\beta}\theta_{2}$ with $\theta_2\in \mathcal{A}_2$. Assume that $\theta(x)\ne0$. Then by Lemma \ref{7895} there exists a class $y\in H^{n-8}(X;\Z_{(2)})$ such that $\langle \theta_2(x), y\rangle\ne 0$. By \cite{Thom}, \S 3.4 we have that $\langle \theta_2(x), y\rangle=\langle x, \chi(\theta_2)(y)\rangle$. Since the degree of $\chi(\theta_2)$ is $8$, then the structure of~$\mathcal{A}_2$ yields that 
\[
\chi(\theta_2)=\varepsilon_0\Sq^8 +\varepsilon_1\Sq^7\Sq^1+\varepsilon_2\Sq^6\Sq^2+ \varepsilon_3\Sq^5\Sq^2\Sq^1 
\]
where $\varepsilon_i \in \Z/2\Z$. Since $y$ is defined over $\Z_{(2)}$, then $\Sq^1(y)=0$. So we get that either $\langle x, \Sq^8(y)\rangle \ne 0$ or $\langle x, \Sq^6\Sq^2(y)\rangle \ne 0$. Since $n< 12$, then $\deg(y)< 4$. Therefore $\Sq^8(y)= \Sq^6\Sq^2(y)=0$ since $\ex(\Sq^8)=8$ and~$\ex(\Sq^6\Sq^2)=4$. 

For $p>2$ statement $2$ can be proved analogously. 
\end{proof}
Finally, let us prove Theorem~\ref{exact boundary for small dimensinal classes}.
\begin{proof}[Proof of Theorem \ref{exact boundary for small dimensinal classes}]
	Theorem~\ref{win} implies that 
	$$\nu_p\bigl(k(n)\bigr)\ge\sum_{i=1}^{\infty}  \left[\frac{n-1}{2p^i}\right].$$
	Therefore, it is sufficient to prove that for a homology class $x\in H_n(X;\Z)$ of a finite CW complex $X$, the homology class $p^{\sum_{i=1}^{\infty}  \left[\frac{n-1}{2p^i}\right]}x$ lies in $E^{\infty}_{n,0}$ of the AHSS for $\MU_*\otimes \Z_{(p)}$ of $X$, provided that $n<2p^2+2p$. 
	
Firstly, assume that $2(r-1)p+1\le n \le 2rp$ with $1\le r\le p$. Then $\sum_{i=1}^{\infty}  \left[\frac{n-1}{2p^i}\right]=r-1$ and Lemma \ref{differentials for cobordisms} implies that  
		\[
		p^{r-1} x \in E^{2r(p-1)+1}_{n,0}
		\]
		and 
		\begin{equation}\label{dif1} 
		d_{n,0}^{2r(p-1)+1}(p^{r-1} x)=\sum_{j}\theta_{r,j}(x)b_{r(p-1),j}
		\ \  (\text{modulo images of the previous differentials}).
		\end{equation}	
Since $\dim(x)=n\le 2rp$ with $r\le p$ and the degree of the integral Steenrod operations $\theta_{r,j}$ is equal to $2r(p-1)+1$, then statement $1$ of Lemma \ref{tech for small n} implies that the differential \eqref{dif1} is zero. The next possibly nontrivial differential has degree $2(r+1)(p-1)+1\ge n+2(p-r)-1$ with $r\le p$. If $r<p$, then this and all the next differentials are trivial by dimensional reasons. If $r=p$ and $2(r+1)(p-1)+1=n-1$, then this differential is trivial by Lemma \ref{t6} and all the next differentials are trivial by dimensional reasons.

Finally, assume that $2p^2+1\le n< 2p^2+2p$. Then $\sum_{i=1}^{\infty}  \left[\frac{n-1}{2p^i}\right]=p+1$ and  Lemma \ref{differentials for cobordisms} implies that 
\[
p^{p+1} x \in E^{2(p+2)(p-1)+1}_{n,0}
\]
and 
\begin{equation}\label{dif2} 
d_{n,0}^{2(p+2)(p-1)+1}\bigl(p^{p+1} x\bigr)=\sum_{j} \theta_{p+2,j}(x)b_{(p+2)(p-1),j}
\ \ (\text{modulo images of the previous differentials}).		
\end{equation}	
Since $\dim(x)=n<2p^2+2p$ and the degree of the integral Steenrod operations $\theta_{p+2,j}$ is equal to \linebreak $2(p+2)(p-1)+1$, then statement $2$ of Lemma \ref{tech for small n} implies that the differential \eqref{dif2} is zero. The next possibly nontrivial differential has degree $2(p+3)(p-1)+1\ge n+2p-4\ge n$. If $p>2$, then this and all the next differentials are trivial by dimensional reasons. If $p=2$ and $2(p+3)(p-1)+1=n$, then this differential is trivial since it has finite order and the  zero column $E_{0,*}^{*}$  is torsion free and all the next differentials are trivial by dimensional reasons.
\end{proof}

 \bibliographystyle{plain}
\bibliography{../MainBib}

\end{document}